\documentclass[10pt]{amsart}
\usepackage{amsthm}
\usepackage{amstext}
\usepackage{amssymb}


\numberwithin{equation}{section}
\numberwithin{figure}{section}

\theoremstyle{definition}
\newtheorem*{defn*}{\protect\definitionname}
\theoremstyle{remark}
\newtheorem*{rem*}{\protect\remarkname}
\theoremstyle{plain}
\newtheorem*{thma*}{Theorem A}
\newtheorem*{thmb*}{Theorem B}
\newtheorem*{thmc*}{Theorem C}
\theoremstyle{plain}
\newtheorem*{claim}{Claim}
\newtheorem{thm}{\protect\theoremname}[section]
\theoremstyle{plain}
\newtheorem{prop}[thm]{\protect\propositionname}
\theoremstyle{plain}
\newtheorem{lem}[thm]{\protect\lemmaname}
\theoremstyle{remark}
\newtheorem{rem}[thm]{\protect\remarkname}
\theoremstyle{plain}
\newtheorem{cor}[thm]{\protect\corollaryname}
\theoremstyle{definition}
\newtheorem{example}[thm]{\protect\examplename}
\newtheorem{defn}[thm]{Definition}

\providecommand{\corollaryname}{Corollary}
\providecommand{\definitionname}{Definition}
\providecommand{\examplename}{Example}
\providecommand{\lemmaname}{Lemma}
\providecommand{\propositionname}{Proposition}
\providecommand{\remarkname}{Remark}
\providecommand{\theoremname}{Theorem}
\providecommand{\theoremname}{Theorem}


\newcommand{\abs}[1]{\left\vert#1\right\vert}
\newcommand{\set}[1]{\left\{#1\right\}}
\newcommand{\Real}{\mathbb{R}}
\newcommand{\sph}{\mathbb{S}}

\newcommand{\bmu}{\bar{\mu}}
\newcommand{\Hess}{\mathrm{Hess}}
\newcommand{\scal}{\mathrm{scal}}
\newcommand{\Ric}{\mathrm{Ric}}

\begin{document}

\title{Warped Product Rigidity}

\author{Chenxu He}
\address{14 E. Packer Ave\\
Dept. of Math, Lehigh University \\
Christmas-Saucon Hall\\
Bethlehem, PA, 18015.}
\curraddr{Department of Math, University of Oklahoma \\
Norman, OK 73019.}
\email{he.chenxu@lehigh.edu}
\urladdr{http://sites.google.com/site/hechenxu/}

\author{Peter Petersen}
\address{520 Portola Plaza\\
Dept of Math UCLA\\
Los Angeles, CA 90095}
\email{petersen@math.ucla.edu}
\urladdr{http://www.math.ucla.edu/\textasciitilde{}petersen}
\thanks{The second author was supported in part by NSF-DMS grant 1006677}

\author{William Wylie}
\address{215 Carnegie Building\\
Dept. of Math, Syracuse University\\
Syracuse, NY, 13244.}
\email{wwylie@syr.edu}
\urladdr{http://wwylie.mysite.syr.edu}
\thanks{The third author was supported in part by NSF-DMS grant 0905527 }

\dedicatory{In memory of Barrett O'Neill}

\subjclass[2000]{53B20, 53C30}

\begin{abstract}
In this paper we study the space of solutions to an overdetermined linear system involving the Hessian of functions. We show that if the solution space has dimension greater than one, then the underlying manifold has a very rigid warped product structure. We obtain a uniqueness result for prescribing the Ricci curvature of a warped product manifold over a fixed base. As an application, this warped product structure will be used to study warped product Einstein structures in \cite{HPWVirt-Soln}.
\end{abstract}
\maketitle

\section*{Introduction}

Let $q$ be a quadratic form on a Riemannian manifold $\left(M,g\right)$ and $Q$ be the corresponding symmetric linear operator on $M$. We shall study the space of solutions
\[
W\left(M;q\right)=\left\{ w\in C^{\infty}\left(M,\mathbb{R}\right):\mathrm{Hess}w=wq\right\} .
\]
Solving $\mathrm{Hess}w=wq$ for a fixed $q$ is generally impossible, but it is a problem that appears in many places.  Perhaps the best known example is due to Obata \cite{Obata} which we will discuss in section 1.  More complicated examples are warped product Einstein structures which  are of this type with
\[
q=\frac{1}{m}\left(\mathrm{Ric}-\lambda g\right)
\]
see \cite{HPWconstantscal} and \cite{HPWVirt-Soln}.

For any positive function $w$,
\[
\mathrm{Hess}w= w q
\]
defines a quadratic form $q$ such that $w\in W(M; q)$. However, if a real valued function $w$ satisfies such an equation, then its zero set is a totally geodesic codimension one submanifold, which is a rather special condition. We shall enhance this by showing that, if such an equation has linearly independent solutions, then the underlying space is a warped product.

Note that when $\dim M=1$ the equation
\[
\mathrm{Hess}w=wq
\]
is a scalar equation
\[
w^{\prime\prime}=Qw
\]
with $q=Qdx^{2}$. Clearly there is a two-dimensional space of solutions unless $M=\sph^{1}$. So this is not a case where we can say much about $\left(M,g,q\right)$. When $M=\sph^{1}$ this equation is also known as Hill's equation. The issue of finding one or two solutions to that equation has a long history (see \cite{Magnus-Winkler}.) In either case the underlying space does have the desired underlying structure of a warped product, albeit in a very trivial fashion with the base being a point and the fiber the space itself. This example shows that one cannot expect $q$ to be determined by the geometry unless there are three or more linearly independent solutions.

The building blocks for all examples consist of base spaces and fiber spaces:
\begin{defn*}
A \emph{base space} $\left(B,g_{B},u\right)$ consists of a Riemannian manifold and a smooth function $u:B\rightarrow[0,\infty)$ such that $u^{-1}\left(0\right)=\partial B$. We define
\[
q_{B}=\frac{1}{u}\mathrm{Hess}u
\]
and when $\partial B\neq\emptyset$ assume that this defines a smooth tensor on $B$, and that $\left|\nabla u\right|=1$ on $\partial B$. Moreover, if the functions in the solution space $W\left(B;q_{B}\right)$, that vanish on $\partial B$ when it is not empty, are constant multiples of $u$, then we call $(B, g_B, u)$ a \emph{base manifold} (see \cite{HPWVirt-Soln}).
\end{defn*}
\smallskip{}

\begin{defn*}
A \emph{fiber space} $\left(F,g_{F},\tau\right)$ consists of a space form $\left(F,g_{F}\right)$ and a \emph{characteristic function} $\tau:F\rightarrow\mathbb{R}$ such that $\dim W\left(F;-\tau g_{F}\right)=\dim F+1$. In case $F$ is a sphere we shall further assume that $\left(F,g_{F}\right)$ is the unit sphere.
\end{defn*}

\begin{rem*}
As we shall see $\tau$ will almost always be a constant. Only when $\dim F=1$ is it possible for $\tau$ to be a function. We shall also see that $F$ must be simply connected unless it is a circle.
\end{rem*}

Our first result is that if $W(M;q)$ has dimension larger than one then $(M,g)$ must  be isometric to a warped product of a particular sort.

\begin{thma*}
Let $\left(M,g\right)$ be complete and simply-connected. If $q$ is a quadratic form such that $\dim W\left(M;q\right)=k+1$ where $k\geq1$, then there is a simply connected base space $\left(B,g_{B},u\right)$ and a fiber space $\left(F,g_{F},\tau\right)$ such that
\[
\left(M,g\right)=\left(B\times F,g_{B}+u^{2}g_{F}\right).
\]
Moreover when $\partial B\neq\emptyset$ or $k>1$, then the characteristic function is constant.
\end{thma*}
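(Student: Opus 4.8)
The plan is to read off the warped product structure directly from the gradients of a basis of solutions, using that every pair of solutions produces a Killing field whose orbits will become the fibers. First I would record the two elementary facts about $W=W(M;q)$. A solution is determined by its $1$-jet $(w(p),\nabla w(p))$ at any point, since along a unit geodesic $\gamma$ the function $w\circ\gamma$ solves the scalar linear ODE $(w\circ\gamma)''=q(\gamma',\gamma')\,(w\circ\gamma)$; with $M$ complete and connected this makes $W$ finite dimensional with $\dim W\le\dim M+1$, so here $\dim W=k+1$. The engine of the proof is the observation that for any $w_1,w_2\in W$ the field $X_{w_1,w_2}=w_1\nabla w_2-w_2\nabla w_1$ is Killing: differentiating and using $\nabla_Y\nabla w_i=w_iQ(Y)$, the $Q$-terms cancel and one is left with $\nabla_Y X_{w_1,w_2}=\langle\nabla w_1,Y\rangle\nabla w_2-\langle\nabla w_2,Y\rangle\nabla w_1$, which is skew in the metric. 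Thus $w_1\wedge w_2\mapsto X_{w_1,w_2}$ is a linear map $\Lambda^2W\to\mathrm{Kill}(M)$, and since $M$ is complete these Killing fields are complete.

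Next I would build the splitting. Fix a basis $w_0,\dots,w_k$, set $\mathbf w=(w_0,\dots,w_k):M\to\Real^{k+1}$, and equip $W$ with the symmetric bilinear form whose associated quadric constrains $\mathbf w$ — positive definite, Lorentzian, or degenerate according as the fiber will be spherical, hyperbolic, or flat — with warping function $u$ defined as the corresponding radial length of $\mathbf w$. A computation like the one above shows each $X_{w_i,w_j}$ annihilates $u$ and is tangent to the level sets of $\mathbf w/u$, so the vertical distribution $\mathcal V=\mathrm{span}\{X_{w_i,w_j}\}$ lies in $(\nabla u)^{\perp}$; its orthogonal complement $\mathcal H$, containing $\nabla u$ together with the directions on which every $w_i$ is constant, is the base direction. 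Because the $X_{ij}$ are Killing, their flows are isometries preserving this data, and one checks that $\mathcal V$ is integrable with totally umbilic leaves whose mean curvature is governed by $\nabla\log u$, while $\mathcal H$ is integrable with totally geodesic leaves. This is exactly the infinitesimal signature of a metric $g=g_B+u^2g_F$, and completeness together with simple connectivity promotes it to a global isometry $M\cong B\times F$ with $B$ simply connected.

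Then I would identify the factors. The fibers are the orbits of the group generated by the $X_{ij}$; their bracket relations close into $\mathfrak{so}(k+1)$, $\mathfrak{so}(k,1)$, or the Euclidean algebra, acting transitively and isotropically on each $k$-dimensional orbit, so $(F,g_F)$ is a space form, normalized to the unit sphere in the spherical case. Restricting the solutions to a single fiber realizes $\dim W(F;-\tau g_F)=\dim F+1=k+1$, so $(F,g_F,\tau)$ is a fiber space. For the base, $q$ restricts to $q_B=u^{-1}\Hess u$, the collapse locus $\set{u=0}$ of the fibers becomes $\partial B$, and smoothness of $g$ across this collapse yields both smoothness of $q_B$ and $\abs{\nabla u}=1$ on $\partial B$, so $(B,g_B,u)$ is a base space.

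The main obstacle is the final assertion that $\tau$ is constant once $\partial B\neq\emptyset$ or $k>1$. When $k>1$ we have $\dim F=k\ge2$, so $F$ is a genuine space form and space-form rigidity forces $\tau$ to equal the constant curvature; the delicate case is $\dim F=1$ with $\partial B\neq\emptyset$, where a priori $\tau$ is only the coefficient function in the fiber equation $w''=-\tau w$. Here I expect to argue at a collapsing point of $\partial B$: smoothness of $q_B=u^{-1}\Hess u$ across $u=0$ together with $\abs{\nabla u}=1$ forces the collapsing circle to close up smoothly with a fixed length, which rigidifies the fiber ODE and pins $\tau$ to a constant. Controlling this boundary behavior — and simultaneously verifying the regularity needed to run the global splitting across $\set{u=0}$ — is the step I anticipate requiring the most care.
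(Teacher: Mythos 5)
Your first and third ingredients match the paper's machinery: the $1$-jet injectivity is its Proposition \ref{prop:injectionHWP}, the Killing fields $w_1\nabla w_2-w_2\nabla w_1$ are its Lemma \ref{lem:Killing-Construction}, and the two orthogonal foliations (vertical umbilic, horizontal totally geodesic) with a Besse-style local warped product are exactly how Theorem \ref{thm:WPHWP} proceeds. But the middle step of your construction has a genuine gap: you define the warping function $u$ as the ``radial length'' of $\mathbf{w}=(w_0,\dots,w_k)$ with respect to a symmetric bilinear form on $W$ whose quadric constrains $\mathbf{w}$. No such form exists a priori, and in general it does not exist at all: when $k=1$ and the singular set is empty, the characteristic function $\tau$ can be genuinely nonconstant (the paper's Example \ref{Exa:F=R}), the quantities $\kappa w^2+\abs{\nabla w}^2$ are then not constant on $M$, and the only conserved quantity of the fiber equation $v''=-\tau v$ is the Wronskian, which is skew rather than quadratic. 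Even in the cases where the form $\bar{\mu}$ does exist ($k>1$ or $S\neq\emptyset$), its very definition requires $\kappa=-\rho-\mathrm{tr}Q$, where $\rho$ is the vertical Ricci eigenvalue --- data that the paper extracts from the warped product structure \emph{after} Theorem A is proved (Theorem \ref{thm:verticalP}, Proposition \ref{prop:mu-construction}); assuming it at the outset is circular. The same objection applies to your claim that the brackets of the $X_{ij}$ close into $\mathfrak{so}(k+1)$, $\mathfrak{so}(k,1)$, or the Euclidean algebra: the paper obtains closure only by a dimension count (Corollary \ref{cor:wedgeWliealg}) and identifies the algebra with $\mathfrak{so}(W,\bar{\mu})$ only a posteriori and only for $k>1$. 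The paper's substitute for your radial coordinate is leaf-wise: at each regular $p$ there is a unique $u_p\in W$ with $u_p(p)=1$ and $\nabla u_p|_p\perp\mathcal{F}_p$, and Lemma \ref{lem:u-structure} shows these normalize to a single function $u$ along each horizontal leaf.

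Two further steps you assert are where most of the paper's labor lies. First, ``completeness together with simple connectivity promotes it to a global isometry'' compresses the hardest part of Theorem \ref{thm:WPHWP}: one must show the quotient map $M\to M/G$ is a Riemannian submersion across the collapse (a monodromy argument when $k>1$; a separate argument with Abelian $G$ and trivial principal isotropy when $k=1$ and the fibers are circles), and then prove $B_p\cap F_p=\set{p}$ to globalize --- you flag this but give no mechanism. Second, ``restricting the solutions to a single fiber realizes $\dim W(F;-\tau g_F)=k+1$'' is not automatic: a priori each $w\in W$ splits as $\pi_1^*(z)+\pi_1^*(u)\cdot\pi_2^*(v)$ with a nontrivial $z\in W(B;q_B)$ (Lemma \ref{lem:wsplitting}), and killing the $z$-component is the content of Theorems \ref{thm:W-S-notempty} and \ref{thm:W}, including a delicate contradiction argument when $\dim W_B=2$. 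Once that is done, your route to constancy of $\tau$ for $k>1$ is essentially the paper's Lemma \ref{lem:GenObata}, and for $\partial B\neq\emptyset$ the paper's argument is cleaner than your ``fixed circle length'' heuristic (which is false as stated --- the fiber circles have varying length $\sim u$): one lets $x\to x_0\in\partial B$ in $\bar{\mu}(u)(x,y)=\kappa u^2+\abs{\nabla u}^2$ and uses continuity of $\kappa$ with $u(x_0)=0$ to get $\bar{\mu}(u)=\abs{\nabla u}^2|_{x_0}$, independent of the fiber variable.
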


\begin{rem*} In general the base space doesn't have to be a base manifold, see Example \ref{Exa:B=R}. This is in sharp contrast to what happens when $q$ is more directly related to the geometry (see \cite{HPWVirt-Soln}).\end{rem*}

From the warped product structure $M = B\times_u F$  constructed in Theorem A,  we obtain  two natural projections $\pi_1$ and $\pi_2$ from $M$ to $B$ and $F$ respectively. The special structure of the manifold $M$ yields the following decomposition of the vector space $W(M; q)$.
\begin{thmb*}
Let $(M, g)$ be complete and simply-connected with $\dim W(M; q) \geq 2$. Suppose $M = B\times_u F$ as in Theorem A. Then we have
\[
W(M; q) = \set{\pi_1^*(u)\cdot \pi_2^*(v) : v \in W(F; -\tau g_F)}.
\]
\end{thmb*}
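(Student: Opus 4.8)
The plan is to use the warped product geometry to split the single equation $\Hess w = wq$ into three tensorial blocks, to verify the asserted product solutions by a direct computation, and then to close the argument by comparing dimensions. First I would record O'Neill's Hessian formulas for $M = B\times_u F$. Writing $X,Y$ for vectors tangent to the base and $V,W$ for vectors tangent to the fiber, and using that the decomposition furnished by Theorem A is adapted to $q$ (so that $q$ is block diagonal, with $q(X,Y) = q_B(X,Y) = \frac{1}{u}\Hess u(X,Y)$, with $q(X,V)=0$, and with $q(V,W)=\rho\, g_F(V,W)$ for some function $\rho$), the equation $\Hess w = wq$ becomes a horizontal block, a mixed block $X(V(w)) = \frac{X(u)}{u}V(w)$, and a vertical block relating the intrinsic Hessian of $w$ on a fiber to $w$ and to $g(\nabla u,\nabla w)$. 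Testing the vertical block on a product $\pi_1^*(u)\pi_2^*(v)$ against the fiber equation $\Hess^F v = -\tau v g_F$ pins down $\rho = \abs{\nabla u}^2 - \tau$, which is forced to be consistent since $W(M;q)$ is assumed nontrivial.

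For the inclusion $\supseteq$ I would substitute $w = \pi_1^*(u)\pi_2^*(v)$ with $v\in W(F;-\tau g_F)$ into the three blocks. The horizontal block collapses to $\Hess u = uq_B$, which holds by the definition of a base space; the mixed block vanishes identically because $V(u)=0$; and, after inserting $\Hess^F v = -\tau v g_F$, the vertical block becomes $uv\,(\abs{\nabla u}^2 - \tau)\,g_F = (uv)\,\rho\, g_F$. Hence $\pi_1^*(u)\pi_2^*(v)\in W(M;q)$. The map $v\mapsto \pi_1^*(u)\pi_2^*(v)$ is linear and, since $u>0$ on the interior of $B$, injective, so its image is a subspace of $W(M;q)$ of dimension $\dim W(F;-\tau g_F) = \dim F + 1$.

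To get equality I would run a dimension count. The fiber produced by the construction in Theorem A has dimension $\dim F = k$, whence the image above has dimension $k+1 = \dim W(M;q)$; a subspace of the full finite dimension must be everything. It is instructive to see the same conclusion structurally: for a general $w\in W(M;q)$ the mixed block shows $V(w)/u$ is constant along $B$, so $w/u = \pi_1^*(\psi) + \pi_2^*(v)$ and $w = \phi + \pi_1^*(u)\pi_2^*(v)$ with $\phi = u\psi$. The horizontal block then gives $\phi\in W(B;q_B)$, and the vertical block shows that $v$ satisfies $\Hess^F v = \lambda g_F$ for a function $\lambda$, i.e. $v$ is of the required concircular type on the space form $F$. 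The whole content is then that the only $\phi\in W(B;q_B)$ extending to a global solution on $M$ are the multiples of $u$, which is exactly what the equality of dimensions records.

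I expect this last point to be the main obstacle, and it is where one must genuinely use that the decomposition is the one from Theorem A rather than an arbitrary presentation of $M$ as a base space warped over a fiber space. Indeed the conclusion is false for other presentations: $\mathbb{H}^n$ can be written both as a point warped over $\mathbb{H}^n$ and as $\Real\times_{e^t}\Real^{n-1}$, and only the former satisfies the stated identity, the latter acquiring the extra solution $e^{-t}$ from the base. Thus the step that rules out such extra base solutions is precisely the place where one leans on the maximality of the fiber in Theorem A, $\dim F = k$; the dimension count is the clean way to import it, while the structural reduction above exhibits the explicit isomorphism $v\mapsto \pi_1^*(u)\pi_2^*(v)$.
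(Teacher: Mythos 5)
Your block decomposition and closing dimension count are fine as logic, but the load-bearing step is the inclusion $\supseteq$, and your justification for it is circular. Theorem A (Theorem \ref{thm:WPHWP}) delivers only the metric splitting $M=B\times_u F$; that $q$ is block diagonal does come out of its proof (Proposition \ref{prop:Btotallygeod}), but nothing in Theorem A says $q(V,W)=\rho\, g_F(V,W)$, let alone $\rho=\abs{\nabla u}^2-\tau$ with $\tau$ the characteristic function of the fiber. The proportionality of $q|_{\mathcal{F}}$ to the metric is a separate theorem in the paper (Theorem \ref{thm:verticalP}), proved via the Weitzenb\"ock formula applied to solutions, and the identification of the factor --- i.e.\ $\tau=\bar{\mu}(u)=\kappa u^2+\abs{\nabla u}^2$, together with the nontrivial fact that this quantity is constant along $B$ so that $-\tau g_F$ is a tensor on $F$ at all --- is exactly the content of Theorems \ref{thm:W-S-notempty} and \ref{thm:W} (via Corollary \ref{cor:Psplitting}, the warped-product Ricci identities, and boundary continuity when $S\neq\emptyset$). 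Your ``pinning down'' of $\rho$ by testing the vertical block on a product $\pi_1^*(u)\pi_2^*(v)$ presupposes that a nonzero solution of that pure form exists; but nontriviality of $W(M;q)$ only gives $w=\pi_1^*(z)+\pi_1^*(u)\pi_2^*(v)$ (Lemma \ref{lem:wsplitting}), and when $z\neq 0$ the vertical block is inhomogeneous (Proposition \ref{prop:WsplittingHWP}, condition (3)) and pins down nothing about $q|_{\mathcal{F}}$. Producing a pure product solution is precisely the difficulty you yourself flagged --- ruling out extra base solutions --- so it cannot be an input to the inclusion: the paper gets it from $z|_{\partial B}=0$ and Proposition \ref{prop:injectionHWP} when $S\neq\emptyset$, and from the analysis $\nabla z\in\mathcal{B}\cap\widehat{\mathcal{F}}$, hence $\dim W_B\leq 2$, when $S=\emptyset$. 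Note also that for $k=1$, $F=\Real$, \emph{every} function $\tau$ makes $(F,g_F,\tau)$ a fiber space, so ``the $\tau$ from Theorem A'' is not even determined by the statement you quote. (Even your horizontal block $q|_{\mathcal{B}}=\frac{1}{u}\mathrm{Hess}_B u$ is Theorem \ref{thm:horizontalP}, not part of Theorem A's statement, though that step is comparatively easy.)

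That said, once the paper's groundwork is in place --- Theorem \ref{thm:verticalP}; the existence of a $k$-dimensional space of pure product solutions from $\dim W_B\leq 2$; $\bar{\mu}(u)$ descending to $F$; and $\dim W\left(F;-\bar{\mu}(u)g_F\right)=k+1$ (trivial for $k=1$, Lemma \ref{lem:ReverseObata} for $k>1$) --- the inclusion $\supseteq$ follows from Proposition \ref{prop:WsplittingHWP-1} with $z=0$, and your dimension count (injectivity of $v\mapsto\pi_1^*(u)\pi_2^*(v)$ plus $\dim W(M;q)=k+1$) then forces every solution to be a pure product. That would actually replace the paper's final step in Theorem \ref{thm:W}, the Killing-field computation showing $w_1\nabla w_2-w_2\nabla w_1\in TF$ forces $z\nabla u-u\nabla z=0$, and is arguably cleaner. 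So the closing count is a genuine simplification of the endgame, but as written the proposal smuggles the substance of Sections 4 and 5 into the unproven premise $q|_{\mathcal{F}}=\left(\abs{\nabla u}^2-\tau\right)g_F$, and is therefore not a proof.
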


\smallskip

\begin{rem*}
We actually show more general results than Theorems A and B. Namely any subspace $W \subset W(M; q)$ with $\dim W > 1$, not necessarily the whole solution space, induces a warped product structure on $M$ as in Theorem A and such $W$ has the decomposition as in Theorem B, see Theorems \ref{thm:WPHWP}, \ref{thm:W-S-notempty} and \ref{thm:W}. This generalization will be useful when we study manifolds that are not simply-connected, see Proposition \ref{prop:nonsimplyconn}. By lifting the quadratic form $q$ to the universal cover we consider the subspace of solutions that are also invariant under deck transformations.
\end{rem*}

\begin{rem*}
The special type of warped product obtained in Theorem A with constant curvature fiber is also referred as a \emph{generalized Robertson-Walker} space in general relativity, see the recent survey \cite{Zeghib} and references therein.
\end{rem*}

The other main result, which is an application of Theorems A and B, is a uniqueness result on warped product metrics.

\begin{thmc*}
Let $(M^{n},g)$ be a complete Riemannian manifold and $w_{1}$, $w_{2}$
 two positive functions on $M$. Let $(N_{1}^{d},h_{1})$ and $(N_{2}^{d},h_{2})$
be two simply-connected space forms. Suppose $(E_{i}, g_{E_{i}}) = \left(M\times N_{i}, g_M + w_i^2 h_i\right)(i=1,2)$ are two warped product metrics having the same scalar curvature as functions on $M$. Furthermore, assume that the Ricci tensors of $E_{1}$ and $E_{2}$ when restricted to the subbundle $TM\subset TE_{i}$ are the same. Then either
\begin{enumerate}
\item $E_{1}$ and $E_{2}$ are isometric to each other, or
\item $d=1$ and $E_1$, $E_2$ are warped product extensions of a base space $B^{n-1}$ with non-isometric isocurved fibers $(\Real\times_{w_i} N_i,dt^2 + w_i^2(t)h_i)(i=1,2)$.
\end{enumerate}
In particular, if $M$ is compact then $E_1$ and $E_2$ are isometric.
\end{thmc*}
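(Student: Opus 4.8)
The plan is to convert the two curvature hypotheses into an overdetermined Hessian system on $M$, apply Theorems A and B to split $M$, and then recognize $E_1$ and $E_2$ as warped products over a common base with forced fiber geometry. First I would record O'Neill's formulas for $E_i=M\times_{w_i}N_i$, where $N_i$ has constant curvature $\kappa_i$ and $\dim N_i=d$. On the block $TM$,
\[
\Ric_{E_i}(X,Y)=\Ric_M(X,Y)-\frac{d}{w_i}\Hess w_i(X,Y),
\]
so the hypothesis that the Ricci tensors agree on $TM$ gives $\frac{1}{w_1}\Hess w_1=\frac{1}{w_2}\Hess w_2=:q$; that is, $w_1,w_2\in W(M;q)$. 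Taking traces yields $\frac{\Delta w_1}{w_1}=\frac{\Delta w_2}{w_2}=\operatorname{tr}q$. Substituting into
\[
\scal_{E_i}=\scal_M+\frac{d(d-1)\kappa_i}{w_i^2}-2d\frac{\Delta w_i}{w_i}-d(d-1)\frac{\abs{\nabla w_i}^2}{w_i^2}
\]
the Laplacian terms cancel, so for $d\geq2$ the hypothesis of equal scalar curvature reduces to the equality of the $N_i$-tangent sectional curvatures,
\[
\frac{\kappa_1-\abs{\nabla w_1}^2}{w_1^2}=\frac{\kappa_2-\abs{\nabla w_2}^2}{w_2^2}.
\]

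I would then split on whether $w_1,w_2$ are linearly dependent. If $w_2=cw_1$ with $c>0$ constant, the last identity forces $\kappa_2=c^2\kappa_1$; writing $g_{E_2}=g_M+w_1^2(c^2h_2)$ exhibits $E_2=M\times_{w_1}(N_2,c^2h_2)$, and $(N_2,c^2h_2)$ is a simply connected space form of curvature $\kappa_1$, hence isometric to $(N_1,h_1)$, so $E_1\cong E_2$ (and likewise when $d=1$, where $\kappa_i=0$). If instead $w_1,w_2$ are linearly independent, then $\dim W(M;q)\geq2$, and, after passing to the universal cover and using the equivariant subspace versions of Theorems A and B when $M$ is not simply connected, we obtain $M=B\times_u F$ with $w_i=u\,v_i$ and $v_i\in W(F;-\tau g_F)$. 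Since $w_i>0$ but $u$ vanishes on $\partial B$, necessarily $\partial B=\emptyset$, $u>0$ and $v_i>0$; moreover $\tau=\kappa_F$ and $\abs{\nabla v_i}^2+\tau v_i^2=C_i$ is constant.

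Rearranging the metric as $g_{E_i}=g_B+u^2(g_F+v_i^2h_i)$ identifies $E_i=B\times_u\tilde F_i$ with fiber $\tilde F_i=F\times_{v_i}N_i$, whose $F$-planes and mixed planes have sectional curvature $\tau$ while its $N_i$-planes have curvature $\frac{\kappa_i-C_i}{v_i^2}+\tau$. Using $\abs{\nabla w_i}^2=v_i^2\abs{\nabla_B u}^2+C_i-\tau v_i^2$, the curvature identity from the first paragraph becomes $\frac{\kappa_1-C_1}{v_1^2}=\frac{\kappa_2-C_2}{v_2^2}$; as $v_1,v_2$ are linearly independent the ratio $v_1^2/v_2^2$ is non-constant, forcing $\kappa_i=C_i$, so each $\tilde F_i$ is a space form of curvature $\tau$. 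Whenever $d\geq2$, or $d=1$ with $\dim F\geq2$, the $\tilde F_i$ are simply connected space forms of equal dimension and curvature, hence isometric, and $E_1\cong E_2$. The only remaining configuration is $d=1$ with $\dim F=1$, i.e. $F=\Real$ and $N_i=\Real$, where $\tau$ may be a non-constant function of the line and the surfaces $\tilde F_i=\Real\times_{v_i}N_i$ are isocurved but need not be isometric; this is alternative (2). If $M$ is compact, this exceptional case is excluded because it requires the non-compact factor $F=\Real$ in $M=B\times_u\Real$, so $E_1\cong E_2$.

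I expect the main obstacle to be the linearly independent case: carrying out the equivariant descent from the universal cover, verifying that positivity of $w_i$ rules out $\partial B\neq\emptyset$, and, most delicately, isolating the $d=1$, $\dim F=1$ exception by proving that in every other configuration the warped fibers $\tilde F_i$ are genuinely isometric simply connected space forms rather than merely isocurved.
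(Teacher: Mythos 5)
Your route is the paper's route: the Ricci hypothesis on $TM$ yields a common quadratic form $q$ with $w_1,w_2\in W(M;q)$, positivity of the $w_i$ puts you in case (2) of Proposition \ref{prop:nonsimplyconn} so that $M=B\times_u F^k$ with $w_i=u\,v_i$ and $\partial B=\emptyset$, and regrouping $g_{E_i}=g_B+u^2(g_F+v_i^2h_i)$ reduces the problem to comparing the fibers $F\times_{v_i}N_i$; your direct sectional-curvature verification that these fibers are space forms is a legitimate substitute for the paper's detour through Einstein metrics (Lemma \ref{lem:spaceformEinstein}). However, there is a genuine gap at the configuration $d\geq 2$, $k=\dim F=1$. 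Immediately after the splitting you assert that ``$\tau=\kappa_F$ and $\abs{\nabla v_i}^2+\tau v_i^2=C_i$ is constant,'' but Theorem \ref{thm:W} only guarantees constancy of $\bar{\mu}(u)$ when $k>1$ (or when $S\neq\emptyset$, which positivity excludes); when $k=1$ one has $F=\Real$ and $\tau=\tau(t)$ can be an arbitrary function (Example \ref{Exa:F=R}). Then $C_i(t)=\tau(t) v_i^2+(v_i')^2$ is not constant, and your pivotal step --- ``as $v_1,v_2$ are linearly independent the ratio $v_1^2/v_2^2$ is non-constant, forcing $\kappa_i=C_i$'' --- collapses, because $\kappa_i-C_i(t)$ are functions rather than constants and the identity $\frac{\kappa_1-C_1(t)}{v_1^2}=\frac{\kappa_2-C_2(t)}{v_2^2}$ no longer has the rigidity you need. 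As written, your trichotomy silently assumes $d\geq2$ entails constant $\tau$, so the case $d\geq2$, $k=1$, $\tau$ non-constant is neither shown isometric nor placed in alternative (2), which the theorem restricts to $d=1$.

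Closing exactly this case is the hardest part of the paper's proof (the Claim in Case B.2): setting $f_i=\log v_i$, one differentiates the scalar-curvature identity $\kappa_1e^{-2f_1}-(f_1')^2=\kappa_2e^{-2f_2}-(f_2')^2$, combines it with $f_i''+(f_i')^2+\tau=0$, and uses constancy of the Wronskian of $v_1,v_2$ (both solve $v''+\tau v=0$) to rule out $f_1'=f_2'$ on any interval; this forces $\kappa_ie^{-2f_i}+f_i''=0$, hence $\Ric^{F_i}=\tau d\,g_i$ in all directions, and Schur's lemma in dimension $d+1\geq3$ makes $\tau$ constant --- a contradiction, so non-constant $\tau$ forces $d=1$. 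Only with this in hand is your claim justified that the sole exceptional configuration is $d=1$, $\dim F=1$. Your closing paragraph flags this point as ``the main obstacle'' but supplies no argument for it, so the proposal is incomplete at its most delicate step; the remainder (the linearly dependent case, which the paper leaves implicit, the exclusion of $\partial B\neq\emptyset$, the $k\geq2$ and $k=1$ constant-$\tau$ cases, and the compactness remark) is sound and matches the paper.
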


\begin{rem*}
On a warped product manifold $E = M^n \times_{w}N^d$ with metric $g_E = g_M + w^2 g_N$, if the scalar curvature of the fiber $(N, g_N)$ is constant, then the scalar curvature of $E$ is constant along each fiber, i.e., defines a function on the base $M$. See \cite[p. 214]{O'Neill}.
\end{rem*}

\begin{rem*}
The assumption that $(N_i, h_i)$ is a simply-connected space form is necessary. Otherwise one can replace one of $N_i$'s by another manifold with the same scalar curvature and then the isometry between $E_1$ and $E_2$ fails completely.
\end{rem*}

\begin{rem*}
Two Riemannian manifolds $(M, g)$ and $(\bar{M}, \bar{g})$ are called \emph{isocurved} if there is a diffeomorphism $f: M \rightarrow \bar{M}$ that preserves the sectional curvatures, i.e., for any $x\in M$ and any two-plane $P \subset T_x M$ we have
\[
K(P) = \bar{K} (f_* P).
\]
In \cite{Kulkarni} R. S. Kulkarni showed that, if the dimension is larger than three, then isocurved metrics are isometric except for the coverings between space forms, also see \cite{Yau}. It is well known that this is not true in dimension two.  It is also important to keep in mind that the metrics on $E_1$ and $E_2$ in case (2)  will generally not be isocurved,  as the property of being isocurved is not preserved by taking products or warped products.
\end{rem*}

\begin{rem*}
In Example \ref{ex:surfacenonisometric} we construct two complete isocurved metrics of the form
\[
(\Real \times_{w_i} \Real , dt^2 + w_{i}^2(t) dx^2)
\]
which are not isometric.  This shows that there are metrics (with $B$ a point)  in case (2) which are not isometric. From Remark \ref{rem:surfacenonisometric}, these examples can also be used to construct metrics of any dimension which also fall into case (2) and are not isometric. On the other hand, if $E_1$ and $E_2$ have some additional geometric structure, for example they are Einstein manifolds or gradient Ricci solitons, we are able to show that this case does not exist, see \cite{HPWVirt-Soln,HPWuniqsoliton}.
\end{rem*}

\begin{rem*}
It remains an open question to characterize when there is an isometry between the metrics falling into case (2).  On the other hand, the isometry in case (1) between $E_1$ and $E_2$ preserves the corresponding warped product splitting, and there can be no such isometry in case (2).
\end{rem*}

In Section 1 we show some basic properties about the solution space $W\left(M;q\right)$ and what it looks like in some simple cases. In Section 2 we establish some properties for $W\left(M;q\right)$ when we know that $M$ is a warped product. Section 3 is devoted to the proof of Theorem A. In Section 4 we use Theorem A to place restrictions on what the quadratic form can look like. This in turn is used in Section 5 to prove Theorem B. Knowing that $M$ is a warped product then allows us to determine what the quadratic form $q$ looks like in terms of the geometry of $M$. In section 6 we consider the manifold $M$ which may not be simply-connected. Theorems A and B are also valid unless the metric has a very special form, see Proposition \ref{prop:nonsimplyconn}. In Section 7, we prove Theorem C. In Section 8 we collect some miscellaneous results: more detailed description of the base space that appears in the warped product structure and, a natural Lie algebra structure on the exterior square $\wedge^2 W$.

\medskip

\textbf{Acknowledgment:} The authors would like to thank David Johnson for helpful discussions.

\medskip
\section{Basic Properties and Examples}

We start by establishing two elementary but fundamental properties.

\begin{prop}\label{prop:injectionHWP}
The evaluation map
\begin{eqnarray*}
W(M;q) & \rightarrow & \mathbb{R}\times T_{p}M,\\
w & \mapsto & \left(w\left(p\right),\nabla w|_{p}\right)
\end{eqnarray*}
is injective.
\end{prop}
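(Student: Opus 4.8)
The plan is to exploit the linearity of the defining equation $\mathrm{Hess}\,w = wq$. Since this equation is linear in $w$, the solution set $W(M;q)$ is a real vector space and the evaluation map is linear; hence injectivity is equivalent to triviality of its kernel. I would therefore reduce the problem to showing that any $w \in W(M;q)$ with $w(p)=0$ and $\nabla w|_p = 0$ must vanish identically.

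The key observation is that the equation restricts, along any geodesic, to a second-order linear ODE. Fix a geodesic $\gamma$ with $\gamma(0)=p$ and set $f(t) = w(\gamma(t))$. Since $\nabla_{\gamma'}\gamma' = 0$, differentiating twice yields
\[
f''(t) = \mathrm{Hess}\,w\left(\gamma'(t),\gamma'(t)\right) = f(t)\, q\left(\gamma'(t),\gamma'(t)\right).
\]
The coefficient $a(t) := q(\gamma'(t),\gamma'(t))$ is smooth, so $f$ solves the linear ODE $f'' = a f$. If $w(p)=0$ and $\nabla w|_p = 0$, then $f(0)=0$ and $f'(0) = \langle \nabla w|_p, \gamma'(0)\rangle = 0$, and uniqueness for linear ODEs forces $f \equiv 0$ along all of $\gamma$.

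To promote this radial vanishing to vanishing on an open set, I would work in a normal neighborhood $U$ of $p$: every $x \in U$ is joined to $p$ by a radial geodesic $t \mapsto \exp_p(tv)$, along which the argument above gives $w(x)=0$. Thus $w \equiv 0$ on $U$, and consequently $\nabla w \equiv 0$ on $U$ as well. This shows that the set $Z = \set{x \in M : w(x)=0 \text{ and } \nabla w|_x = 0}$ is open; it is plainly closed, being the common zero set of the continuous functions $w$ and $\abs{\nabla w}$. Since $M$ is connected and $p \in Z$, we conclude $Z = M$, so $w \equiv 0$.

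The argument is essentially standard unique continuation for the restricted ODE along geodesics, and I do not anticipate a serious obstacle. The one point requiring a little care is the passage from vanishing along each individual radial geodesic to vanishing on a full neighborhood, and thence to the openness of $Z$; the normal-neighborhood together with the open–closed dichotomy handles this cleanly, and it uses only connectedness of $M$ rather than any completeness hypothesis.
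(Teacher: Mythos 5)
Your proof is correct and follows essentially the same route as the paper's: restrict $w$ to a geodesic through $p$, observe that $h(t)=w(\gamma(t))$ satisfies the linear second-order ODE $h''=q(\gamma',\gamma')\,h$, invoke ODE uniqueness to kill $h$ when the initial data vanish, and conclude via the open--closed argument on the set $\set{x: w(x)=0,\ \nabla w|_{x}=0}$ using connectedness of $M$. Your explicit use of a normal neighborhood to justify openness is a small refinement of a step the paper leaves implicit, but the argument is the same one the paper adapts from Corvino.
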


\begin{proof}
We use a proof adapted from \cite{Corvino} and which was also used in \cite{HPWLCF}. Let $w\in W=W(M;q)$ and let $\gamma$ be a unit speed geodesic emanating from $p\in M$. Define $h(t)=w(\gamma(t))$ and $\Theta(t)=q\left(\gamma^{\prime}(t),\gamma^{\prime}(t)\right)$. Then we have a linear second order o.d.e. along $\gamma$ for $h$ given by
\begin{eqnarray*}
h^{\prime\prime}(t) & = & \mathrm{Hess}w(\gamma^{\prime}(t),\gamma^{\prime}(t))\\
& = & \frac{\Theta(t)}{m}\cdot h(t).
\end{eqnarray*}
Thus $h$ is uniquely determined by its initial values
\begin{eqnarray*}
h\left(0\right) & = & w\left(\gamma\left(0\right)\right),\\
h^{\prime}\left(0\right) & = & g\left(\nabla w,\gamma^{\prime}\left(0\right)\right).
\end{eqnarray*}
In particular $h$ must vanish if $w$ and its gradient vanish at $p.$

This shows that the set $A=\left\{ p\in M:w\left(p\right)=0,\nabla w|_{p}=0\right\} $ is open. As it is clearly also closed it follows that $w$ must vanish everywhere if $M$ is connected and $A$ is nonempty.
\end{proof}

Next we prove a basic fact about the zero set of a $w\in W(M;q)$.
\begin{prop}\label{prop:hypersurfaceHWP}
Let $L=\left\{ p\in M:w(p)=0\right\} \neq\emptyset$ for some $w\in W(M;q).$ Then $L$ is a totally geodesic hypersurface.
\end{prop}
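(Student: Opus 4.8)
The plan is to establish first that $L$ is a smooth embedded hypersurface, and then to show that its second fundamental form vanishes identically. The crucial input for the smoothness is Proposition~\ref{prop:injectionHWP}. Assuming $w\not\equiv0$ (otherwise $L=M$ and there is nothing to prove), the proof of that proposition shows that the set where $w$ and $\nabla w$ vanish simultaneously is both open and closed, so on a connected $M$ it is either empty or all of $M$. Since $w\not\equiv0$, it must be empty. Hence at every $p\in L$ we have $\nabla w|_{p}\neq0$, so $0$ is a regular value of $w$. By the regular value theorem, $L=w^{-1}(0)$ is a smooth embedded hypersurface, and the unit normal field along $L$ is $N=\nabla w/\left|\nabla w\right|$, which is well defined precisely because $\nabla w$ does not vanish on $L$.

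The second step is to compute the second fundamental form of $L$ with respect to $N$. For vector fields $X,Y$ tangent to $L$, I would write the shape operator as $S(X)=\nabla_{X}N$ and expand
\[
g\left(S(X),Y\right)=g\left(\nabla_{X}\frac{\nabla w}{\left|\nabla w\right|},Y\right)=\frac{1}{\left|\nabla w\right|}g\left(\nabla_{X}\nabla w,Y\right)+\left(X\cdot\frac{1}{\left|\nabla w\right|}\right)g\left(\nabla w,Y\right).
\]
Since $\nabla w$ is normal to $L$ while $Y$ is tangent, the last term drops out, leaving
\[
g\left(S(X),Y\right)=\frac{\mathrm{Hess}\,w(X,Y)}{\left|\nabla w\right|}.
\]

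The final step uses the defining equation $\mathrm{Hess}\,w=wq$. On $L$ we have $w=0$, so $\mathrm{Hess}\,w=wq$ vanishes identically along $L$, and therefore $g\left(S(X),Y\right)=0$ for all $X,Y$ tangent to $L$. This means the second fundamental form is identically zero, so $L$ is totally geodesic. I do not expect any serious obstacle: once the regularity of $L$ is secured via Proposition~\ref{prop:injectionHWP}, the totally geodesic conclusion is a short and standard computation relating the Hessian of a defining function to the second fundamental form of its level sets. The only points requiring care are the implicit hypothesis $w\not\equiv0$ and the bookkeeping that makes $N$ globally well defined along $L$, both of which are handled by the nonvanishing of $\nabla w$ on $L$.
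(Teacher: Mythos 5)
Your proposal is correct and follows the same route as the paper: Proposition \ref{prop:injectionHWP} rules out simultaneous vanishing of $w$ and $\nabla w$, so $0$ is a regular value and $L$ is a hypersurface, and then $\mathrm{Hess}\,w = wq = 0$ along $L$ forces the second fundamental form to vanish. You merely spell out the standard computation $g(S(X),Y)=\mathrm{Hess}\,w(X,Y)/\left|\nabla w\right|$ that the paper leaves implicit, which is a harmless (indeed helpful) elaboration, not a different argument.
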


\begin{proof}
We already know that $\nabla w$ can't vanish on $L.$ This shows that 0 is a regular value for $w$ and hence that $L$ is hypersurface. We know in addition from $\nabla_{X}\nabla w=wQ\left(X\right)$ that $\mathrm{Hess}w$ vanishes on $L.$ This shows that $L$ is totally geodesic.
\end{proof}

 Recall that Obata  \cite{Obata}  proved  (in our notation) that a complete metric $(M,g)$  supports a non-constant function  in $W\left(M;-g\right)$
if and only if   $(M,g)$ is isometric to the sphere.  Many other authors have proven different generalizations of this result.  In particular, Tashiro \cite{Tashiro} showed that a complete metric $(M,g)$ has  a non-constant function in $W\left(M;-\tau g\right)$ for some function $\tau$ if and only if $(M,g)$ is (globally) of the form,
\[ dt^2 + (v^2(t)) g_N.  \]
In fact, a local version of this result was used by Brinkmann in the 1920s \cite{Brinkmann}.  Also see \cite{Cheeger-Colding}, \cite{OS}, and \cite[9.117]{Besse}.  See \cite{KuehnelRademacher} for  the result in the pseudo-Riemannian case.

We extend these results by studying the case where $\dim W\left(M;-\tau g\right)$ is maximal.

\begin{thm}
Let $\left(M,g\right)$ be a complete simply connected Riemannian $n$-manifold with $n>1$. If there exists $\tau\in\mathbb{R}$ such that
\[
\dim W\left(M;-\tau g\right)=n+1,
\]
then $\left(M,g\right)$ is a simply connected space form of constant curvature $\tau$.
\end{thm}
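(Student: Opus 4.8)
The plan is to extract pointwise curvature information directly from the defining equation and then exploit the maximality of the dimension. Writing $q=-\tau g$, the condition $w\in W(M;-\tau g)$ says $\nabla_{X}\nabla w=-\tau w\,X$ for every vector field $X$, where $\nabla w$ is the gradient of $w$. First I would differentiate this relation once more and antisymmetrize to compute the curvature acting on $\nabla w$. Using $R(X,Y)=\nabla_{X}\nabla_{Y}-\nabla_{Y}\nabla_{X}-\nabla_{[X,Y]}$ and the torsion-free property, a short calculation in which every term involving $\nabla_{X}Y$ and $\nabla_{Y}X$ cancels yields
\[
R(X,Y)\nabla w=\tau\bigl(g(\nabla w,Y)X-g(\nabla w,X)Y\bigr)
\]
for all $X,Y$. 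This is already the constant-curvature formula, but so far only tested against the distinguished vector $\nabla w$.

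The second ingredient is the dimension hypothesis. By Proposition \ref{prop:injectionHWP} the evaluation map $W(M;-\tau g)\to\Real\times T_{p}M$, $w\mapsto(w(p),\nabla w|_{p})$, is injective; since the hypothesis $\dim W(M;-\tau g)=n+1$ matches $\dim(\Real\times T_{p}M)=n+1$, this injection is in fact an isomorphism. In particular the gradient evaluation $w\mapsto\nabla w|_{p}$ is surjective onto $T_{p}M$ at every point $p$ (for $v\in T_pM$ choose $w$ with image $(0,v)$). Hence, given an arbitrary $Z\in T_{p}M$, I may pick $w\in W(M;-\tau g)$ with $\nabla w|_{p}=Z$, and the displayed identity upgrades to
\[
R(X,Y)Z=\tau\bigl(g(Y,Z)X-g(X,Z)Y\bigr)
\]
for all $X,Y,Z\in T_{p}M$. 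Since $p$ was arbitrary, this says precisely that $(M,g)$ has constant sectional curvature $\tau$ everywhere.

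To finish, I would invoke the classical Killing--Hopf theorem: a complete, simply connected Riemannian manifold of constant curvature $\tau$ is isometric to the simply connected space form of curvature $\tau$ (a rescaled round sphere if $\tau>0$, Euclidean space if $\tau=0$, rescaled hyperbolic space if $\tau<0$), which is exactly the desired conclusion. The hypothesis $n>1$ is what makes ``constant sectional curvature'' a nonvacuous constraint, and it is consistent with the maximal value $n+1$ being forced by the injectivity bound.

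I do not expect a serious obstacle here; the theorem is a clean rigidity statement and the argument is formal once the right identity is in hand. The one step requiring genuine care is the curvature computation: it must be verified as a pointwise \emph{tensorial} identity (all the non-tensorial $\nabla_X Y$ contributions genuinely cancel), since the entire argument hinges on evaluating that identity along many different solutions $w$. The conceptual crux, rather than a difficulty, is recognizing that equality in the injectivity bound of Proposition \ref{prop:injectionHWP} forces the gradients of solutions to span each tangent space, thereby promoting the identity from the single direction $\nabla w$ to all of $T_pM$.
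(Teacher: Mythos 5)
Your proof is correct, but it takes a genuinely different route from the paper. The paper argues by cases on the sign of $\tau$: for $\tau>0$ it simply cites Obata (where a single non-constant solution already suffices); for $\tau=0$ it observes that the hypothesis yields an $n$-dimensional space of non-constant functions with vanishing Hessian, forcing $\left(M,g\right)=\mathbb{R}^{n}$; and for $\tau<0$ it uses the quantity $\bar{\mu}\left(w\right)=\tau w^{2}+\left|\nabla w\right|^{2}$, which is constant for each solution and, via the evaluation isomorphism of Proposition \ref{prop:injectionHWP}, corresponds to a quadratic form with a negative direction, so after scaling some solution satisfies $\left|\nabla\mathrm{arccosh}\left(w\right)\right|^{2}=-\tau$ and Tashiro's classification exhibits the metric as the hyperbolic warped product $dr^{2}+\left(\sinh\left(\sqrt{-\tau}r\right)/\sqrt{-\tau}\right)^{2}g_{\mathbb{S}^{n-1}}$. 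You instead differentiate the defining equation $\nabla_{Y}\nabla w=-\tau wY$ once and antisymmetrize, obtaining the tensorial identity $R\left(X,Y\right)\nabla w=\tau\left(g\left(\nabla w,Y\right)X-g\left(\nabla w,X\right)Y\right)$ (the computation checks out: the $w\nabla_{X}Y$ terms cancel against $\nabla_{\left[X,Y\right]}\nabla w$), and then use the observation that equality in the injectivity bound makes the evaluation map $w\mapsto\left(w\left(p\right),\nabla w|_{p}\right)$ an isomorphism, so gradients of solutions realize every tangent vector at every point; since $\tau$ is constant by hypothesis, no Schur argument is needed, and Killing--Hopf finishes. Your argument is more elementary and uniform in $\tau$ --- no case split and no appeal to the Obata/Tashiro classification results --- while the paper's approach buys consistency with the machinery ($\bar{\mu}$, Tashiro-type warped product structure) that is reused throughout the rest of the paper, and in the $\tau>0$ case records the stronger fact that one non-constant solution already forces the round sphere.
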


\begin{proof}
Obata considered the case where $\tau>0$ and in that case it suffices to assume that $\dim W\left(M;-\tau g\right)\geq1$. In case $\tau\leq0$ we do however need the stronger assumption.

When $\tau=0$ we see that constant functions are in $W\left(M;0\right)$. But there will also be an $n$-dimensional subspace of non-constant functions whose Hessians vanish. This shows that $\left(M,g\right)=\mathbb{R}^{n}$ with the Euclidean flat metric.

When $\tau<0$ note that any $w\in W\left(M;-\tau g\right)$ has the property that $\bar{\mu}\left(w\right)=\tau w^{2}+\left|\nabla w\right|^{2}$ is constant and thus defines a nondegenerate quadratic form on $W\left(M;-\tau g\right)$. By Proposition \ref{prop:injectionHWP} it follows that some $w$ will have $\bar{\mu}\left(w\right)<0$. By scaling we can then assume that some $w\in W\left(M;-\tau g\right)$ will satisfy $\tau w^{2}+\left|\nabla w\right|^{2}=\tau$ or
\[
-\tau=\frac{\left|\nabla w\right|^{2}}{-1+w^{2}}=\left|\nabla\mathrm{arccosh}\left(w\right)\right|^{2}.
\]
By \cite{Tashiro},   $w=\cosh\left(\sqrt{-\tau}r\right)$, where $r:M\rightarrow\mathbb{R}$ is the distance to a point in $M$, and the metric has constant curvature $\tau$ as it is the warped product metric
\[
dr^{2}+\frac{\sinh\left(\sqrt{-\tau}r\right)}{\sqrt{-\tau}}g_{\mathbb{S}^{n-1}}
\]
where $g_{\mathbb{S}^{n-1}}$ is the standard metric on the unit sphere.
\end{proof}

This result can be further extended as follows:
\begin{lem}\label{lem:GenObata}
Let $\left(M,g\right)$ be a complete simply connected Riemannian $n$-manifold with $n\geq1$. If there exists $\tau:M\rightarrow\mathbb{R}$ such that
\[
\dim W\left(M;-\tau g\right)=n+1,
\]
then either $n=1$ or $\tau$ is constant and $\left(M,g\right)$ is a simply connected space form of constant curvature $\tau$.
\end{lem}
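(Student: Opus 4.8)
The plan is to reduce to the preceding Theorem by showing that $\tau$ must be constant once $n\geq 2$. When $n=1$ there is nothing to prove, so assume $n\geq 2$. The idea is to exploit the hypothesis that $\dim W(M;-\tau g)=n+1$ is the maximal value allowed by Proposition \ref{prop:injectionHWP}: since the evaluation map $w\mapsto(w(p),\nabla w|_p)$ is injective into the $(n+1)$-dimensional space $\mathbb{R}\times T_pM$, it is in fact an isomorphism. Hence at any chosen point $p$ we may select a solution with prescribed value and gradient at $p$, and it is exactly this freedom that will let us isolate $\nabla\tau$.

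First I would record the curvature identity satisfied by solutions. Writing the equation as $\nabla_X\nabla w=-\tau w\,X$ and differentiating once more, a direct computation with $R(X,Y)=\nabla_X\nabla_Y-\nabla_Y\nabla_X-\nabla_{[X,Y]}$ gives
\[
R(X,Y)\nabla w=Y(\tau w)\,X-X(\tau w)\,Y,
\]
the torsion-free terms $\tau w\bigl(\nabla_XY-\nabla_YX-[X,Y]\bigr)$ cancelling. Expanding $X(\tau w)=w\,d\tau(X)+\tau\,g(\nabla w,X)$ and evaluating at $p$ with $a=w(p)$ and $v=\nabla w|_p$ yields
\[
R(X,Y)v=\bigl(a\,d\tau(Y)+\tau\,g(v,Y)\bigr)X-\bigl(a\,d\tau(X)+\tau\,g(v,X)\bigr)Y.
\]

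Now I would use surjectivity of the evaluation map to choose $w_0\in W(M;-\tau g)$ with $w_0(p)=1$ and $\nabla w_0|_p=0$. For this solution the left-hand side vanishes while the right-hand side collapses to $d\tau(Y)X-d\tau(X)Y$, so that
\[
d\tau(Y)\,X=d\tau(X)\,Y\qquad\text{for all }X,Y\in T_pM.
\]
Because $n\geq 2$ we may take $X,Y$ linearly independent, forcing $d\tau(X)=d\tau(Y)=0$; running over a basis gives $d\tau|_p=0$. Since $p$ was arbitrary and $M$ is connected, $\tau$ is constant. With $\tau$ constant the preceding Theorem applies directly to identify $(M,g)$ as the simply connected space form of curvature $\tau$; alternatively one can simply read off $R(X,Y)v=\tau\bigl(g(v,Y)X-g(v,X)Y\bigr)$ from the displayed identity, which is the constant-curvature condition, and then invoke completeness and simple-connectedness.

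The only real subtlety is the decoupling in the middle step: one must confirm that the maximality of $\dim W$ genuinely delivers a solution with nonzero value but vanishing gradient at a prescribed point, since it is precisely the separation of the $w(p)$ coefficient from the $\nabla w|_p$ coefficient in the curvature identity that pins down $\nabla\tau$. I expect this to be the heart of the argument; everything after the constancy of $\tau$ is then forced either by the earlier Theorem or by reading the constant-curvature equation off directly.
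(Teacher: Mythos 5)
Your proof is correct, but it takes a genuinely different route from the paper's. The paper stays at first order: from $\mathrm{Hess}\,w=-\tau w g$ it derives $d\left(\left|\nabla w\right|^{2}\right)=-\tau\, d\left(w^{2}\right)$, takes an exterior derivative to get $d\tau\wedge d\left(w^{2}\right)=0$ for every solution, and then uses the evaluation isomorphism of Proposition \ref{prop:injectionHWP} to produce $n$ solutions $w_{i}$ with $w_{i}(x)=1$ and prescribed gradients so that the $d\left(w_{i}^{2}\right)$ form a basis of covectors at $x$, forcing $d\tau|_{x}=0$. You instead differentiate the Hessian equation once more to obtain the curvature identity $R(X,Y)\nabla w=Y(\tau w)X-X(\tau w)Y$, and then use the same evaluation isomorphism (your surjectivity step is exactly right: injectivity into the $(n+1)$-dimensional space $\mathbb{R}\times T_{p}M$ plus $\dim W=n+1$ gives an isomorphism) to pick a single solution with $w_{0}(p)=1$, $\nabla w_{0}|_{p}=0$, which isolates $d\tau|_{p}=0$ when $n\geq 2$. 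Both arguments hinge on the same maximality mechanism, but yours buys something extra: once $\tau$ is constant (or even directly, by choosing solutions with $w(p)=0$ and arbitrary gradient), your identity reads $R(X,Y)v=\tau\left(g(v,Y)X-g(v,X)Y\right)$ for all $v\in T_{p}M$, so the constant-curvature conclusion falls out of the same computation without appealing to the preceding Theorem and its use of Obata/Tashiro; the price is a second-order computation where the paper's is first-order and shorter. All the individual steps check out, including the sign in the curvature identity, the cancellation of the torsion terms, and the deduction $d\tau(Y)X=d\tau(X)Y\Rightarrow d\tau|_{p}=0$ for $n\geq2$.
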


\begin{proof}
When $n=1$ it is clear that $\dim W\left(M;-\tau g\right)=2$ for any function $\tau$. In general having a solution to
\[
\mathrm{Hess}w=-\tau wg
\]
shows that
\[
d\left(\left|\nabla w\right|^{2}\right)=-\tau d\left(w^{2}\right)
\]
In particular, $d\tau\wedge d\left(w^{2}\right)=0$ for all $w\in W\left(M;-\tau g\right)$. Now use $\dim W\left(M;-\tau g\right)=n+1$ together with Proposition \ref{prop:injectionHWP} to find $n$ functions $w_{i}\in W\left(M;-\tau g\right),\, i=1,...,n$ such that $d\left(w^{2}\right)$ form a basis at $x$. Then $d\tau\wedge d\left(w_{i}^{2}\right)=0$ implies that $d\tau$ vanishes at $x$.
\end{proof}

In case $M$ has constant curvature we also have the following converse.
\begin{lem}\label{lem:ReverseObata}
Let $\left(M,g\right)$ be a complete simply connected space form and $\tau\in\mathbb{R}$. Either $\dim W\left(M;-\tau g\right)=\dim M+1$ or all functions in $W\left(M;-\tau g\right)$ are constant.
\end{lem}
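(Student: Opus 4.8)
The plan is to reduce the dichotomy to a single comparison. Write $\kappa$ for the constant sectional curvature of $(M,g)$ and $n=\dim M$. I will show that $W(M;-\tau g)$ contains a nonconstant function precisely when $\tau=\kappa$, and that in that case $\dim W(M;-\tau g)=n+1$. Since $n\ge 1$, the constants span a space of dimension at most one, hence never of dimension $n+1$, so these two facts give exactly the stated alternatives. The case $n=1$ is immediate: a complete simply connected $1$-manifold is $\Real$, and $\Hess w=-\tau w g$ becomes the scalar equation $w''=-\tau w$, whose solution space is two-dimensional for every $\tau$, so the first alternative always holds. Assume $n\ge 2$ henceforth.

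The heart of the argument is a curvature identity. For $w\in W(M;-\tau g)$ the equation $\Hess w=-\tau w g$ is equivalent to $\nabla_X\nabla w=-\tau w X$ for all $X$. Differentiating this relation a second time and using that $\nabla$ is torsion-free, a direct computation yields
\[
R(X,Y)\nabla w=\tau\bigl(g(\nabla w,Y)X-g(\nabla w,X)Y\bigr),
\]
whereas constant curvature $\kappa$ means
\[
R(X,Y)\nabla w=\kappa\bigl(g(\nabla w,Y)X-g(\nabla w,X)Y\bigr).
\]
Subtracting gives $(\tau-\kappa)\bigl(g(\nabla w,Y)X-g(\nabla w,X)Y\bigr)=0$ for all $X,Y$. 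If $w$ is nonconstant, then $\nabla w\ne 0$ at some point, and because $n\ge 2$ I may take $X=\nabla w$ and $Y$ a nonzero vector orthogonal to $\nabla w$ there; the bracketed expression becomes $-\lvert\nabla w\rvert^{2}Y\ne 0$, forcing $\tau=\kappa$. This settles one half of the dichotomy: whenever $\tau\ne\kappa$, every element of $W(M;-\tau g)$ is constant.

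For the remaining case $\tau=\kappa$ I must produce an $(n+1)$-dimensional solution space. Proposition \ref{prop:injectionHWP} shows the evaluation map $w\mapsto(w(p),\nabla w|_p)$ is injective, so $\dim W(M;-\tau g)\le n+1$ and it suffices to exhibit $n+1$ linearly independent solutions. I would do this through the standard models of simply connected space forms of curvature $\kappa$. When $\kappa=0$ the solutions of $\Hess w=0$ on $\Real^{n}$ are the affine functions $a+\sum b_i x_i$, spanning a space of dimension $n+1$. When $\kappa\ne 0$, realize $M$ as a hypersurface in a flat ambient space---the round sphere in $\Real^{n+1}$ or hyperbolic space in Minkowski space $\Real^{n,1}$---and check that the restrictions of the $n+1$ ambient linear coordinate functions satisfy $\Hess w=-\kappa w g$ and are linearly independent; equivalently one starts from the single radial solution $\mathrm{sn}_\kappa'(d(p,\cdot))$ and spreads it around by the transitive isometry group, which preserves $W(M;-\tau g)$ because $\tau$ is constant. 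Together with the bound from Proposition \ref{prop:injectionHWP}, this gives $\dim W(M;-\tau g)=n+1$.

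I expect this last step to be the only real obstacle. The curvature identity is clean and simultaneously pins down $\tau=\kappa$ and rules out any intermediate dimension, but establishing the lower bound $\dim W=n+1$ forces a case analysis over the three model geometries; in particular the radial solution $\mathrm{sn}_\kappa'(d(p,\cdot))$ degenerates to a constant in the flat case, so there one genuinely needs the affine functions instead.
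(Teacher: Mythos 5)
Your proof is correct, and its core mechanism coincides with the paper's: both deduce from the existence of a nonconstant solution of $\Hess w=-\tau wg$ that $\tau$ must equal the curvature of $M$, by differentiating the Hessian equation once more. The packaging differs: the paper first observes the first integral $\tau w^{2}+\abs{\nabla w}^{2}=\mathrm{const}$ and concludes $\mathrm{sec}\left(\nabla w,X\right)=\tau$, whereas you derive the full tensor identity $R(X,Y)\nabla w=\tau\left(g(\nabla w,Y)X-g(\nabla w,X)Y\right)$ and compare it with the constant-curvature tensor; these carry the same information (your identity is correct with either sign convention, since you apply the same convention to both sides, and your test vectors $X=\nabla w$, $Y\perp\nabla w$ do force $\tau=\kappa$ when $n\geq 2$). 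Where you genuinely add content is the converse half: the paper's proof ends once $\tau$ is identified with the curvature, tacitly treating as known the classical fact that a simply connected space form of curvature $\tau$ satisfies $\dim W\left(M;-\tau g\right)=\dim M+1$. You supply exactly this: the upper bound $\dim W\leq n+1$ from the injectivity of the evaluation map in Proposition \ref{prop:injectionHWP}, and the matching lower bound via affine functions on $\Real^{n}$ and restrictions of the $n+1$ ambient linear coordinates on the sphere in $\Real^{n+1}$ and the hyperboloid in Minkowski space (a routine verification you reasonably leave as a check, including the linear independence, which follows since no hyperplane through the origin contains these hypersurfaces). So your write-up is self-contained where the paper's argument leaves the model computation implicit; aside from that, the two proofs are the same argument.
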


\begin{proof}
When $n=1$ this is obvious. Otherwise we have to show that if $w$ is a non-constant solution to the equation $\mathrm{Hess}w=-\tau wg$, then $\tau$ is the curvature of $M$. However, the fact that $\mathrm{Hess}w=-\tau wg$ together with knowing that $\tau w^{2}+\left|\nabla w\right|^{2}$ is constant shows that $\mathrm{sec}\left(\nabla w,X\right)=\tau$.
\end{proof}

\smallskip

In Theorem A, even though the manifolds $M$ do not have boundary, manifolds with boundary do arise as base spaces. The warped product structure of $M$ yields a decomposition of the space $W(M; q)$ involving  functions in the space $W(B; q|_{B})$, see Proposition \ref{prop:WsplittingHWP}.  When $B$ has boundary,   the boundary conditions satisfied by these functions  in $W(B; q|_{B})$ also  yields some further restrictions. We will encounter both Dirichlet and Neumman boundary conditions, for which we will use the following notation.

\begin{defn}
Let $M$ be a Riemannian manifold with boundary $\partial M \ne \emptyset$ and $\nu$ is a normal vector to $\partial M$, then we define the spaces with Dirichlet and Neumann boundary conditions as
\begin{eqnarray*}
W(M; q) & = & \set{w \in C^{\infty}(M) : \mathrm{Hess} w = w q}, \\
W(M; q)_D & = & \set{w \in W(M; q) : w|_{\partial M} = 0}, \\
W(M; q)_N & = & \set{w \in W(M; q) : {\frac{\partial w}{\partial \nu}}|_{\partial M} = 0}.
\end{eqnarray*}
\end{defn}

We have the following general fact when $\partial M \ne \emptyset$.

\begin{prop}
If $\partial M \ne \emptyset$, then $\dim W(M; q)_D \leq 1$. Moreover if $\dim W(M; q)_D = 1$, then
\[
W(M; q) = W(M; q)_D \oplus W(M; q)_N.
\]
\end{prop}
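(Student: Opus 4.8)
The plan is to reduce everything to a computation of the normal derivative along $\partial M$, using only the two tools already in hand: the injectivity of the evaluation map (Proposition \ref{prop:injectionHWP}) and the fact that zero sets are totally geodesic hypersurfaces (Proposition \ref{prop:hypersurfaceHWP}). First I would prove $\dim W(M;q)_D\le 1$. Fix $p\in\partial M$. If $w\in W(M;q)_D$ then $w\equiv 0$ along $\partial M$, so its tangential gradient vanishes there and $\nabla w|_p=\left(\partial w/\partial\nu\right)(p)\,\nu$. The linear map $W(M;q)_D\to\Real$, $w\mapsto\left(\partial w/\partial\nu\right)(p)$, is then injective: a function in its kernel has $w(p)=0$ and $\nabla w|_p=0$, hence $w\equiv 0$ by Proposition \ref{prop:injectionHWP}. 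The same idea gives $W(M;q)_D\cap W(M;q)_N=\set{0}$, since a function in the intersection has vanishing value and tangential gradient (Dirichlet) and vanishing normal derivative (Neumann) at $p$, so $\nabla w|_p=0=w(p)$.

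Now assume $\dim W(M;q)_D=1$ and let $w_0$ span it. By Proposition \ref{prop:hypersurfaceHWP} the zero set of $w_0$ is a totally geodesic hypersurface containing $\partial M$, so $\partial M$ is totally geodesic and $\nabla w_0\neq 0$ there; thus $c:=\left(\partial w_0/\partial\nu\right)|_{\partial M}$ is nowhere zero. In view of the trivial intersection, it remains to show $W(M;q)=W(M;q)_D+W(M;q)_N$, i.e. that every $w$ can be written $\lambda w_0+w_N$ with $w_N\in W(M;q)_N$. Since $w-\lambda w_0$ lies in $W(M;q)_N$ exactly when $\partial w/\partial\nu=\lambda\,\partial w_0/\partial\nu$ on $\partial M$, and $c$ is nowhere zero, this is equivalent to producing a constant $\lambda$ with $f:=\left(\partial w/\partial\nu\right)/c\equiv\lambda$; that is, to showing $f$ is constant on $\partial M$.

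The key step is controlling the tangential variation of the normal derivative. For $Y$ tangent to $\partial M$, differentiating $\langle\nabla w,\nu\rangle$ and using that $\partial M$ is totally geodesic, so $\nabla_Y\nu=0$ and the cross term $\langle\nabla w,\nabla_Y\nu\rangle$ drops out, gives
\[
Y\left(\partial w/\partial\nu\right)=\Hess w(Y,\nu)=w\,q(Y,\nu)\qquad\text{on }\partial M.
\]
Applied to $w_0$ (which vanishes on $\partial M$) this shows $c$ is constant on each component, and applied to a general $w$ it reduces the constancy of $f$ to the identity $q(Y,\nu)=0$ on $\partial M$ for all tangential $Y$. I would extract this from the smoothness of $q=\tfrac{1}{w_0}\Hess w_0$ up to the boundary: in Fermi coordinates $(r,\theta)$ the relation $\Hess w_0=w_0 q$ forces the $r^2$-coefficient of $w_0$ to vanish, since it equals $\Hess w_0(\partial_r,\partial_r)|_{r=0}=w_0\,q(\partial_r,\partial_r)|_{r=0}=0$; hence $w_0=cr+O(r^3)$ while $\Hess w_0(Y,\nu)=O(r^2)$, and dividing gives $q(Y,\nu)=O(r)\to0$ on $\partial M$.

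The hard part will be precisely this identity $q(Y,\nu)|_{\partial M}=0$: it is the single place where the smoothness of $q$ \emph{up to} $\partial M$, rather than merely on the interior, is used in an essential way, and without it $f$ need not be constant. A second, more delicate point is that the computation above only makes $f$ locally constant, so when $\partial M$ is disconnected one must still match the value of $\lambda$ across the different boundary components. I would attempt this by propagating the fundamental solutions of the normal Cauchy problem $h''=q(\gamma',\gamma')\,h$ along geodesics normal to $\partial M$ and comparing the Dirichlet and Neumann data at the two ends; this global matching, rather than the local identity, is where I expect the real work to lie.
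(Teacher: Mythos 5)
Your treatment of the two assertions the paper actually argues --- the bound $\dim W(M;q)_D\le 1$ and the triviality of $W(M;q)_D\cap W(M;q)_N$ --- coincides with the paper's proof: both are immediate from evaluating $(w(p),\nabla w|_p)$ at a boundary point and invoking Proposition \ref{prop:injectionHWP}. Everything after that in your proposal concerns a point on which the paper's written proof is silent, namely why $W(M;q)_D+W(M;q)_N$ exhausts $W(M;q)$; the paper verifies only that the sum is direct. Your local analysis of this surjectivity is sound and completable: $\partial M$ is totally geodesic, so $Y\left(\partial w/\partial\nu\right)=\Hess w(Y,\nu)=w\,q(Y,\nu)$ along $\partial M$; applied to $w_0$ this makes $c=\partial w_0/\partial\nu$ locally constant; and the Fermi-coordinate expansion does force $q(Y,\nu)|_{\partial M}=0$, although your assertion $\Hess w_0(Y,\nu)=O(r^2)$ needs, besides $w_0=cr+O(r^3)$, the facts that $Y(c)=0$ and that $\Hess w_0(\nu,\nu)$ vanishes identically along $\partial M$ (so that the $r$-coefficient of $\partial_r w_0$ has zero tangential derivative), together with the vanishing shape operator to control the connection term $\left\langle\nabla w_0,\nabla_Y\partial_r\right\rangle$ --- all of which you have in hand. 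Hence $\partial w/\partial\nu$ is locally constant for every $w\in W(M;q)$, and when $\partial M$ is connected the decomposition follows.

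The step you flagged as the remaining difficulty --- matching the constant $\lambda$ across different components of $\partial M$ --- is a genuine gap, and in fact it cannot be closed: the spanning statement fails in general for disconnected boundary, so no propagation of fundamental solutions along normal geodesics will rescue it. Take $M=[0,L]$ with $q=-\tau(t)\,dt^2$, and let $\Phi\in SL_2(\Real)$ be the matrix carrying the Cauchy data $(w(0),w'(0))$ of a solution of $w''=-\tau w$ to $(w(L),w'(L))$. Then $W(M;q)_D\neq\set{0}$ iff the upper-right entry of $\Phi$ vanishes, while $W(M;q)_N\neq\set{0}$ iff the lower-left entry vanishes, and these are independent conditions: concatenating a flat piece ($\tau=0$, a shear), a piece with $\tau=1$ of length $\pi/2$ (a quarter rotation), and another flat piece produces a lower-triangular $\Phi$ with nonzero lower-left entry, and after smoothing $\tau$ one can adjust one length to keep the upper-right entry exactly zero. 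For such $\tau$ one has $\dim W(M;q)=2$, $\dim W(M;q)_D=1$, $W(M;q)_N=\set{0}$, so $W(M;q)\neq W(M;q)_D\oplus W(M;q)_N$; this is the same Dirichlet/Neumann non-coexistence phenomenon the paper itself encounters for Hill's equation in Remark \ref{rem:PeriodicExamples}. So the ``moreover'' is correct only as the paper proves it (directness of the sum) or under connected boundary, where your argument finishes the job; note that the later applications (e.g., Theorem \ref{thm:W-S-notempty}) only use the bound $\dim W(M;q)_D\le 1$, so nothing downstream is affected. Your instinct that the component matching is where the real work lies was exactly right --- it is where the statement itself breaks.
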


\begin{proof}
Let $x \in \partial M$ and $w\in W(M; q)_D$. If $\nabla w(x) = 0$, then Proposition \ref{prop:injectionHWP} implies that $w$ is the zero function. Thus any non-zero element $w \in W(M; q)_D$ satisfies
\[
w(x) =0 \quad \nabla w(x) \ne 0 \quad \nabla w(x)\perp \partial B
\]
which, applying Proposition \ref{prop:injectionHWP} again shows that $\dim W(M; q)_D \leq 1$.

In the case when $\dim W(M; q)_D = 1$ Proposition \ref{prop:injectionHWP} also shows that the intersection is zero,
\[
W(M;q)_D \cap W(M;q)_N = \set{0},
\]
i.e., the decomposition of $W(M;q)$ is a direct sum.
\end{proof}
\begin{rem}
Note that the condition $\dim W(M; q)_D = 1$ implies that $M$ has totally geodesic boundary.
\end{rem}

In the following we describe some basic examples, where the manifold $M$ is one dimensional. We consider the quadratic form $q$ defined by the warped product Einstein equation, i.e.,
\[
q = -\tau g \quad \text{with}\quad \tau = \frac{\lambda}{m}.
\]
Note that $\mathrm{Ric} = 0$ in this case. In \cite[Example 1]{HPWLCF} we had the classification when the solution $w \in W(M; -\tau g)$ is non-negative and only vanishes on the boundary. Here we extend that classification to allow any sign of $w$.

\begin{example}
Let $(M, g) = (\Real, dt^2)$, then $w \in W(M; -\tau g)$ if and only if
\[
w'' = -\tau w.
\]
So we have three different cases depending on the sign of $\lambda$.
\begin{enumerate}
\item When $\lambda > 0$, we have $w = C_1 \cos(\sqrt{\tau}t) + C_2 \sin(\sqrt{\tau}t)$.
\item When $\lambda = 0$, we have $w = C_1 t + C_2$.
\item When $\lambda < 0$, we have $w = C_1 \exp(\sqrt{-\tau} t) + C_2 \exp(-\sqrt{-\tau} t)$.
\end{enumerate}
In particular we have $\dim W(M; - \tau g) = 2$ for all three cases.
\end{example}

\begin{example}
Let $(M, g) = (\sph^1_a, dt^2)$ be the circle with radius $a$. Then $W(\sph^1_a; -\tau g)$ corresponds to the elements in $W(\Real; -\tau g)$ which have period $2\pi a$. This gives us the following
\[
\dim W(\sph^1_a; -\tau g) = \left\{
\begin{array}{rl}
2 & \text{ if } \lambda > 0 \text{ and } a\sqrt{\tau} \text{ is an integer,} \\
1 & \text{ if } \lambda = 0, \\
0 & \text{ otherwise.}
\end{array}
\right.
\]
\end{example}

We now look at one dimensional examples with boundary.

\begin{example}
Let $(M, g) = ([0,\infty), dt^2)$ and then again we have three different cases.
\begin{enumerate}
\item When $\lambda > 0$, we have
\begin{eqnarray*}
W(M; -\tau g)_D & = & \set{C \sin (\sqrt{\tau}t) : C \in \Real} \\
W(M; -\tau g)_N & = & \set{C \cos(\sqrt{\tau}t) : C \in \Real}.
\end{eqnarray*}
\item When $\lambda = 0$, we have
\begin{eqnarray*}
W(M; -\tau g)_D & = & \set{C t : C \in \Real} \\
W(M; -\tau g)_N & = & \set{C : C \in \Real}.
\end{eqnarray*}
\item When $\lambda < 0$, we have
\begin{eqnarray*}
W(M; -\tau g)_D & = & \set{C \sinh (\sqrt{-\tau}t) : C \in \Real} \\
W(M; -\tau g)_N & = & \set{C \cosh(\sqrt{-\tau}t) : C \in \Real}.
\end{eqnarray*}
\end{enumerate}
\end{example}

Finally we consider the closed interval which is similar to the circle case.

\begin{example}
Let $(M, g) = ([0, 2\pi a], dt^2)$. We have
\begin{enumerate}
\item If $\lambda > 0$ and $a\sqrt{\tau}$ is an integer, then
\begin{eqnarray*}
W(M; -\tau g)_D & = & \set{C \sin (\sqrt{\tau}t) : C \in \Real} \\
W(M; -\tau g)_N & = & \set{C \cos(\sqrt{\tau}t) : C \in \Real}.
\end{eqnarray*}
\item If $\lambda = 0$, then
\begin{eqnarray*}
W(M; -\tau g)_D & = & \set{0} \\
W(M; -\tau g)_N & = & \set{C : C \in \Real}.
\end{eqnarray*}
\item Otherwise
\[
W(M; -\tau g) = W(M;-\tau g)_D = W(M; -\tau g)_N = \set{0}.
\]
\end{enumerate}
\end{example}

\medskip
\section{Warped Product Extensions}

In this section we create a fairly general class of examples using warped product extensions. The goal is to start with a base space $\left(B,g_{B},u\right)$ and  then construct $(M,g)$ as a warped product over $(B,g_{B})$ with fiber $\left(F,g_{F}\right)$ and metric given by
\[
g=g_{B}+u^{2}g_{F}.
\]
When $\partial B\neq\emptyset$ there are further conditions in order to obtain a smooth metric on $M$. The fiber has to be a round sphere which we can assume to be the unit sphere and $\nabla u$ a unit normal field to $\partial B\subset B$. There are further conditions on the higher derivatives of $u$ and we also need $\partial B\subset B$ to be totally geodesic. These conditions, however, are automatically satisfied as we assume that
\[
uq_{B}=\mathrm{Hess}_{B}u
\]
for some smooth symmetric tensor $q_{B}$ on $B$.

The warped product structure defines two distributions on $M$, the horizontal one given by $TB$ and the vertical one by $TF$. We denote these two distributions by $\mathcal{B}$ and $\mathcal{F}$ respectively. The projection onto $B$ is denoted by $\pi_{1}:M\rightarrow B$ and the projection onto $F$ by $\pi_{2}:M\rightarrow F$. We use $X,Y,\ldots$ and $U,V,\ldots$ to denote the horizontal and vertical vector fields respectively.

Next we need to define $q$ on $M$ as an extension of $q_{B}$ on $B$. We assume that $q$ preserves the horizontal and vertical distributions and that on the horizontal distribution $q\left(X,Y\right)=q_{B}\left(X,Y\right)$.

As $q$ preserves the horizontal and vertical distributions it follows that any $w\in W\left(M;q\right)$ has the property that its Hessian also preserves these distributions. Consequently the function $w$ has a special form.

\begin{lem}\label{lem:wsplitting}
If $M=B\times_uF$ and  $w:M\rightarrow\mathbb{R}$ satisfies
\[
(\mathrm{Hess}_{M}w)(X,U)=0
\]
for all $X\in TB$ and $U\in TF$, then
\[
w=\pi_{1}^{*}(z)+\pi_{1}^{*}(u)\cdot\pi_{2}^{*}(v)
\]
where $z:B\rightarrow\mathbb{R}$ and $v:F\rightarrow\mathbb{R}$.

Moreover, if
\[
\pi_{1}^{*}(z)+\pi_{1}^{*}(u)\cdot\pi_{2}^{*}(v)=0
\]
then $v$ must be constant and $z$ a multiple of $u$.
\end{lem}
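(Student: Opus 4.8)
The plan is to exploit the standard warped product connection formula to reduce the mixed-Hessian hypothesis to a first-order condition, and then integrate it fiber by fiber.

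First I would recall O'Neill's formula for the Levi-Civita connection of $g = g_{B}+u^{2}g_{F}$ \cite{O'Neill}: for a horizontal field $X$ (lifted from $B$) and a vertical field $U$ (lifted from $F$) one has $\nabla_{X}U = \frac{X(u)}{u}\,U$. Combining this with the general identity $(\Hess_{M}w)(X,U) = X(Uw) - (\nabla_{X}U)w$ gives
\[
(\Hess_{M}w)(X,U) = X(Uw) - \frac{X(u)}{u}\,Uw = u\cdot X\!\left(\frac{Uw}{u}\right)
\]
on the region $\{u>0\}$. Hence the hypothesis $(\Hess_{M}w)(X,U)=0$ is equivalent to saying that, for each fixed vertical field $U$, the function $Uw/u$ is annihilated by every horizontal vector field; since $B$ is connected this means $Uw/u$ is constant along $B$ and so descends to a function on $F$.

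Next I would turn this into the claimed product form. In local fiber coordinates $x^{i}$ on $F$ the above reads: $\partial_{i}w/u$ is independent of the base point. Fixing a base point $b_{0}$ in the interior of $B$ (so $u(b_{0})>0$) and a point $x_{0}\in F$, I would \emph{define} the function on $F$
\[
v(x) := \frac{w(b_{0},x)-w(b_{0},x_{0})}{u(b_{0})},
\]
which is smooth and satisfies $\partial_{i}v = \partial_{i}w(b_{0},\cdot)/u(b_{0})$. Because $\partial_{i}w/u$ does not depend on the base point, one checks $\partial_{i}\bigl(w - \pi_{1}^{*}(u)\,\pi_{2}^{*}(v)\bigr) = \partial_{i}w - u\,\partial_{i}v = 0$, so $w - \pi_{1}^{*}(u)\,\pi_{2}^{*}(v)$ has vanishing vertical derivatives and is therefore the pullback of a function $z$ on $B$. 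This gives $w = \pi_{1}^{*}(z) + \pi_{1}^{*}(u)\cdot\pi_{2}^{*}(v)$ on $\{u>0\}$; as both sides are smooth on all of $M$ and $\{u>0\}=(\operatorname{int}B)\times F$ is dense, the identity extends by continuity. Defining $v$ from a single fiber is a deliberate choice: it avoids integrating a closed one-form on $F$, so the topology of $F$ (e.g. the circle case) never enters. For the \emph{moreover} part, suppose $\pi_{1}^{*}(z)+\pi_{1}^{*}(u)\cdot\pi_{2}^{*}(v)\equiv 0$, i.e. $z(b)+u(b)v(x)=0$ for all $(b,x)$. Choosing any $b$ in the interior of $B$ gives $u(b)>0$, whence $v(x)=-z(b)/u(b)$ is independent of $x$; thus $v\equiv v_{0}$ is constant, and substituting back yields $z = -v_{0}u$, a multiple of $u$.

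The main obstacle is really just the first step — getting the mixed-Hessian formula right and confirming that its vanishing is exactly horizontal-constancy of $Uw/u$; after that the integration is routine. The only other point requiring care is the degenerate locus $\partial B = \{u=0\}$, where the fiber collapses; I sidestep it by establishing the identity on the dense open set $\{u>0\}$ and extending by continuity, rather than arguing directly where $u$ vanishes.
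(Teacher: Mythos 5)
Your proposal is correct and takes essentially the same route as the paper: both invoke the warped-product connection formula $\nabla_{X}U=\frac{X(u)}{u}U$ to show that the hypothesis $(\mathrm{Hess}_{M}w)(X,U)=0$ is exactly the statement that $U(w)/u$ is constant along $B$, then recover $v$ by restricting $w/u$ to a single fiber and set $z=w-\pi_1^*(u)\,\pi_2^*(v)$, with the same one-line uniqueness argument. Your extra care at the collapsed locus $\{u=0\}$ and the explicit single-fiber definition of $v$ merely spell out details the paper's terse ``This shows the claim'' leaves implicit.
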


\begin{proof}
The second fundamental form for a warped product is particularly simple: if $X$ is a vector field on $B$ and $U$ a vector field on $F$, then
\[
\nabla_{X}^{M}U=\nabla_{U}^{M}X=\frac{D_{X}u}{u}U.
\]
With that in mind we have
\begin{eqnarray*}
D_{X}\frac{1}{u}D_{U}w & = & -\frac{D_{X}u}{u^{2}}D_{U}w+\frac{1}{u}D_{X}D_{U}w\\
& = & \frac{1}{u}\left(-D_{\nabla_{X}^{M}U}w+D_{X}D_{U}w\right)\\
& = & \frac{1}{u}(\mathrm{Hess}_{M}w)(X,U)\\
& = & 0.
\end{eqnarray*}
Thus $D_{U}\frac{w}{u}$ is constant on $B$. This shows that if we restrict $\frac{w}{u}$ to the fibers $F_{i}=\left\{ b_{i}\right\} \times F$ over points $b_{1},b_{2}\in B$ then the difference
\[
\frac{w}{u}|_{F_{1}}-\frac{w}{u}|_{F_{2}}
\]
is constant. This shows the claim.

For the uniqueness statement just note that if
\[
\pi_{1}^{*}(z)=-\pi_{1}^{*}(u)\cdot\pi_{2}^{*}(v)
\]
then the right hand side defines a function on $B$ and thus $v$ must be constant.
\end{proof}

\begin{rem}
When $B$ has boundary and we insist that both $\pi_{1}^{*}(z)$ and $\pi_{1}^{*}(u)\cdot\pi_{2}^{*}(v)$ be smooth on $M$, then there are extra conditions. The function $\pi_{1}^{*}(u)\cdot\pi_{2}^{*}(v)$ is smooth at the singular set only if $v$ is odd $-v\left(y\right)=v\left(-y\right),\: y\in\mathbb{S}^{k}$. On the other hand $\pi_{1}^{*}(z)$ can only be smooth if $\nabla z$ is tangent to the boundary of $B$, i.e., it satisfies the Neumann boundary condition on $B$.
\end{rem}

Next we study how $W(M;q)$ relates to $u$ and the fiber $F$.
\begin{prop}\label{prop:WsplittingHWP}
Let $M=B \times_uF$ and assume that $uq_{B}=\mathrm{Hess}_{B}u$. Then $w\in W(M;q)$ if and only if there exist $z\in C^{\infty}(B)$ and $v\in C^{\infty}(F)$ such that
\begin{enumerate}
\item $w=\pi_{1}^{*}(z)+\pi_{1}^{*}(u)\cdot\pi_{2}^{*}(v)$,
\item $z\in W(B;q_{B})$, and
\item $\mathrm{Hess}_{F}v+v\left(-q|_{\mathcal{F}}+\left|\nabla u\right|_{B}^{2}g_{F}\right)=-\left(-\dfrac{z}{u}q|_{\mathcal{F}}+g_{B}(\nabla u,\nabla z)g_{F}\right)$.
\end{enumerate}
\end{prop}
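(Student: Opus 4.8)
The plan is to reduce the tensorial equation $\mathrm{Hess}_{M}w = wq$ to its three blocks relative to the orthogonal splitting $TM=\mathcal{B}\oplus\mathcal{F}$, since both $\mathrm{Hess}_{M}w$ and $wq$ preserve this splitting. First I would dispose of the mixed block. Because $q$ preserves the distributions we have $q(X,U)=0$, so any $w\in W(M;q)$ satisfies $(\mathrm{Hess}_{M}w)(X,U)=0$, and Lemma \ref{lem:wsplitting} immediately produces the decomposition $w=\pi_{1}^{*}(z)+\pi_{1}^{*}(u)\cdot\pi_{2}^{*}(v)$ of item (1). Conversely, Lemma \ref{lem:wsplitting} guarantees that every $w$ of this form has vanishing mixed Hessian, so item (1) is exactly equivalent to the mixed block of the equation. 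It then remains to match the horizontal block with (2) and the vertical block with (3).

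For the horizontal block I would use the warped product identity $\nabla_{X}^{M}Y=\nabla_{X}^{B}Y$ for horizontal lifts together with the fact that $\pi_{2}^{*}(v)$ is constant in horizontal directions. Writing $w=z+uv$ for the pulled-back functions, a direct computation gives
\[
(\mathrm{Hess}_{M}w)(X,Y)=\mathrm{Hess}_{B}z(X,Y)+v\,\mathrm{Hess}_{B}u(X,Y),
\]
while the right-hand side is $wq(X,Y)=(z+uv)q_{B}(X,Y)$. Substituting the standing hypothesis $\mathrm{Hess}_{B}u=uq_{B}$ makes the terms involving $v$ cancel, leaving $\mathrm{Hess}_{B}z=zq_{B}$, which is exactly condition (2).

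The heart of the argument is the vertical block. Here I would invoke O'Neill's formula $\nabla_{U}^{M}V=\nabla_{U}^{F}V-u\,g_{F}(U,V)\,\nabla u$, where $\nabla u$ is the horizontal gradient of $u$ on $B$. Computing $(\mathrm{Hess}_{M}w)(U,V)=U(V(w))-(\nabla_{U}^{M}V)(w)$ and repeatedly using that $z$ and $u$ are annihilated by vertical vectors while $v$ is annihilated by horizontal ones, the leading terms assemble into $u\,\mathrm{Hess}_{F}v(U,V)$, and the warping correction $-u\,g_{F}(U,V)\nabla u$ contributes exactly $u\,g_{F}(U,V)\bigl(g_{B}(\nabla u,\nabla z)+|\nabla u|_{B}^{2}\,v\bigr)$ after applying $\nabla u$ to $w=z+uv$. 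Setting this equal to $wq(U,V)=(z+uv)\,q|_{\mathcal{F}}(U,V)$, dividing by $u$ (valid on the interior where $u>0$), and moving the $v\,q|_{\mathcal{F}}$ and $|\nabla u|_{B}^{2}\,v\,g_{F}$ terms to the left produces precisely the identity in (3).

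The only genuinely delicate point is the bookkeeping in the vertical block: one must track which of the six derivatives occurring in $U(V(w))$ and $(\nabla_{U}^{M}V)(w)$ vanish, and correctly attribute the surviving $g_{B}(\nabla u,\nabla z)$ and $|\nabla u|_{B}^{2}$ terms to the warping correction rather than to $\mathrm{Hess}_{F}v$. Since each of the three block equations has been shown to be equivalent to its corresponding condition, combining them establishes the stated equivalence in both directions.
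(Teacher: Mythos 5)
Your proposal is correct and takes essentially the same approach as the paper: decompose $\mathrm{Hess}_{M}w=wq$ into blocks along $\mathcal{B}\oplus\mathcal{F}$, obtain the form of $w$ from Lemma \ref{lem:wsplitting}, and match the horizontal block to condition (2) using $\mathrm{Hess}_{B}u=uq_{B}$ and the vertical block to condition (3) via the warped product Hessian formulas. Your explicit treatment of the converse in the mixed block and the O'Neill connection computation merely spell out steps the paper leaves implicit.
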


\begin{proof}
From Lemma \ref{lem:wsplitting} we know that any function $w\in W(M;q)$ has the form
\[
w=\pi_{1}^{*}(z)+\pi_{1}^{*}(u)\cdot\pi_{2}^{*}(v).
\]

On the horizontal distribution we have
\begin{eqnarray*}
w q|_{\mathcal{B}} & = & zq_{B}+uvq_{B},\\
\left(\mathrm{Hess}_{M}w\right)|_{\mathcal{B}} & = & \mathrm{Hess}_{B}z+v\mathrm{Hess}_{B}u.
\end{eqnarray*}
Since $u\in W\left(B;q_{B}\right)$ we see that $\left(\mathrm{Hess}_{M}w\right)|_{\mathcal{B}}=wq|_{\mathcal{B}}$ if and only if $z\in W\left(B;q_{B}\right)$.

On the vertical distribution we have
\begin{eqnarray*}
w q|_{\mathcal{F}} & = & zq|_{\mathcal{F}}+uvq|_{\mathcal{F}}\\
\left(\mathrm{Hess}_{M}w\right)|_{\mathcal{F}} & = & u\mathrm{Hess}_{F}v+uv|\nabla u|_{B}^{2}g_{F}+ug_{B}(\nabla u,\nabla z)g_{F}.
\end{eqnarray*}
Thus $wq|_{\mathcal{F}}=\left(\mathrm{Hess}_{M}w\right)|_{\mathcal{F}}$ is equivalent to condition 3.
\end{proof}

Note that if $\dim W\left(B;q_{B}\right)=1$ then all $w\in W\left(M;q\right)$ are of the form $w=\pi_{1}^{*}(u)\cdot\pi_{2}^{*}(v)$. This motivates the following

\begin{cor}\label{cor:Psplitting}
Let $M=B \times_uF$ and assume that $uq_{B}=\mathrm{Hess}_{B}u$, $W\left(M;q\right)\neq0$, and that some nontrivial \textup{$w\in W\left(M;q\right)$ }\textup{\emph{is of the form $w=\pi_{1}^{*}(u)\cdot\pi_{2}^{*}(v)$, then}} there is a symmetric tensor $q_{F}$ on $F$ such that
\[
q_{F}=q|_{\mathcal{F}}-\left|\nabla u\right|_{B}^{2}g_{F}.
\]
\end{cor}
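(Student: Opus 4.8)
The plan is to read the needed identity straight off Proposition \ref{prop:WsplittingHWP} and then show that the vertical tensor $T := q|_{\mathcal{F}} - \left|\nabla u\right|_B^2 g_F$ is constant in the base directions, so that it descends to $F$. A function of the form $w = \pi_1^*(u)\cdot\pi_2^*(v)$ is precisely the decomposition of Proposition \ref{prop:WsplittingHWP} with $z = 0$. Substituting $z=0$ into condition (3) of that proposition kills the entire right-hand side and leaves
\[
\mathrm{Hess}_F v = v\left(q|_{\mathcal{F}} - \left|\nabla u\right|_B^2 g_F\right) = v\,T .
\]
The key observation is that $\mathrm{Hess}_F v$ and $v$ depend only on the fiber variable, while $T$ a priori depends on both base and fiber; the equation therefore forces $v\,T$ to be independent of the base point.

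First I would note that $v\not\equiv 0$: since $u>0$ on the interior of $B$, the assumption that $w$ is nontrivial forces $v$ to be a nonzero function on $F$. Next I would fix a base point $b_0$ and set $T_0 := T|_{\{b_0\}\times F}$, which is a genuine symmetric tensor on $F$. Evaluating the displayed equation at $b_0$ gives $\mathrm{Hess}_F v = v\,T_0$, i.e. $v\in W(F; T_0)$. The injectivity in Proposition \ref{prop:injectionHWP}, applied to $(F,g_F)$ with the form $T_0$, then shows that $v$ cannot vanish together with its gradient anywhere (else $v\equiv 0$), so $0$ is a regular value, and by Proposition \ref{prop:hypersurfaceHWP} the zero set $\{v=0\}$ is a hypersurface in $F$. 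Hence $\{v\ne 0\}$ is open and dense.

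On this dense open set I can divide the displayed equation by $v$ to obtain
\[
T = \frac{1}{v}\,\mathrm{Hess}_F v ,
\]
whose right-hand side depends only on the fiber point. Thus over $B\times\{v\ne 0\}$ the tensor $T$ is independent of the base variable. Finally, since $q$, $\left|\nabla u\right|_B^2$ and $g_F$ are all smooth, $T$ is a smooth tensor on $M$, and $B\times\{v\ne 0\}$ is dense in $M$; the relation that $T(b,f)$ is independent of $b$ therefore extends by continuity to all of $M$. Consequently $T$ descends to a well-defined smooth symmetric tensor $q_F$ on $F$ with $q_F = q|_{\mathcal{F}} - \left|\nabla u\right|_B^2 g_F$.

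The one delicate point is the zero locus of $v$: away from it the conclusion is immediate by division, so the real work is ruling out that $v$ vanishes on an open set and bridging across $\{v=0\}$. This is exactly where Propositions \ref{prop:injectionHWP} and \ref{prop:hypersurfaceHWP} enter, reducing the zero set to a hypersurface, after which smoothness of $T$ and a density argument finish the proof.
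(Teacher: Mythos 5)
Your proof is correct and follows essentially the same route as the paper: substitute $z=0$ into condition (3) of Proposition \ref{prop:WsplittingHWP}, divide by $v$ where $v\neq 0$, and bridge across the zero set of $v$ by continuity since that set is a hypersurface. The only cosmetic difference is that you apply Propositions \ref{prop:injectionHWP} and \ref{prop:hypersurfaceHWP} intrinsically to $v$ on $(F,g_F)$ (with the restricted tensor $T_0$), whereas the paper notes directly that $w$, and hence $v$, vanishes only on a totally geodesic hypersurface via the same propositions applied on $M$.
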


\begin{proof}
As we can write $w\in W\left(M;q\right)$ in the form $w=\pi_{1}^{*}(u)\cdot\pi_{2}^{*}(v)$ it follows from condition 3. in Proposition \ref{prop:WsplittingHWP} that
\[
\mathrm{Hess}_{F}v=v\left(q|_{\mathcal{F}}-\left|\nabla u\right|_{B}^{2}g_{F}\right).
\]
This implies that $q|_{\mathcal{F}}-\left|\nabla u\right|_{B}^{2}g_{F}$ can only depend on $F$ at points where $v\neq0$. Now $w$ and hence also $v$ can only vanish on a totally geodesic hypersurface so by continuity $q_{F}$ defines a symmetric tensor on $F$.
\end{proof}

\begin{rem}
As we shall see, the most important examples of such constructions always have the property that $q|_{\mathcal{F}}=-\kappa u^{2}g_{F}$ for some function $\kappa:M\rightarrow\mathbb{R}$. The previous corollary then shows that $\kappa u^{2}+\left|\nabla u\right|^{2}$ is constant on the horizontal leaves and thus defines a function on $F$. 
\end{rem}

\medskip
\section{The Warped Product Structure}

In this section we prove Theorem A, i.e., manifolds with $\dim W\left(M;q\right)>1$ are warped products, see Theorem \ref{thm:WPHWP}.

We start with an elementary lemma that shows how we construct Killing vector fields.
\begin{lem}\label{lem:Killing-Construction}
Let $v,w\in C^{\infty}\left(M\right)$ then $v\nabla w-w\nabla v$ is a Killing vector field if and only if $v\mathrm{Hess}w=w\mathrm{Hess}v$.
\end{lem}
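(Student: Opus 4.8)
The plan is to prove the statement by computing the covariant derivative of the symmetric $(1,1)$-tensor (or equivalently the symmetric $2$-form) associated to the candidate vector field $Z = v\nabla w - w\nabla v$ and showing that the Killing equation $g(\nabla_X Z, Y) + g(\nabla_Y Z, X) = 0$ holds for all $X,Y$ precisely when $v\,\mathrm{Hess}\,w = w\,\mathrm{Hess}\,v$. Recall that a vector field $Z$ is Killing if and only if the endomorphism $X \mapsto \nabla_X Z$ is skew-symmetric, i.e. $g(\nabla_X Z, Y)$ is antisymmetric in $X$ and $Y$. So the whole proof reduces to computing this bilinear form and isolating its symmetric part.

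First I would expand $\nabla_X Z$ using the Leibniz rule. Since $Z = v\nabla w - w\nabla v$, we get
\[
\nabla_X Z = (D_X v)\nabla w + v\nabla_X \nabla w - (D_X w)\nabla v - w\nabla_X \nabla v.
\]
Pairing with $Y$ and using $g(\nabla_X\nabla w, Y) = \mathrm{Hess}\,w(X,Y)$ (and similarly for $v$) yields
\[
g(\nabla_X Z, Y) = (D_X v)(D_Y w) - (D_X w)(D_Y v) + v\,\mathrm{Hess}\,w(X,Y) - w\,\mathrm{Hess}\,v(X,Y).
\]
Now I would symmetrize in $X$ and $Y$. The key observation is that the first two terms, $(D_X v)(D_Y w) - (D_X w)(D_Y v)$, are \emph{antisymmetric} under swapping $X \leftrightarrow Y$, so they contribute nothing to the symmetric part. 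Meanwhile the Hessian terms are already symmetric, so the symmetric part of $g(\nabla_X Z, Y)$ is exactly $v\,\mathrm{Hess}\,w(X,Y) - w\,\mathrm{Hess}\,v(X,Y)$. Hence $Z$ is Killing if and only if this expression vanishes identically, which is precisely the condition $v\,\mathrm{Hess}\,w = w\,\mathrm{Hess}\,v$.

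I do not anticipate a serious obstacle here; the lemma is essentially a bookkeeping computation, and the only point requiring a moment's care is the bookkeeping of which terms are symmetric versus antisymmetric. The one genuinely useful structural remark is that the gradient terms organize themselves into the antisymmetric $2$-form $dv\wedge dw$, which automatically drops out of the Killing (symmetric) equation, so that the entire symmetric obstruction is carried by the Hessian difference. This makes both directions of the equivalence immediate and simultaneous, with no need to argue them separately.
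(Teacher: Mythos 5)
Your proposal is correct and is essentially the paper's own proof: both expand $\nabla_X\left(v\nabla w - w\nabla v\right)$ by the Leibniz rule, observe that the gradient terms form the skew-symmetric piece $\left(\nabla w\wedge\nabla v\right)(X)$ (your $dv\wedge dw$ remark), and conclude that the Killing condition holds precisely when the symmetric remainder $v\,\mathrm{Hess}\,w - w\,\mathrm{Hess}\,v$ vanishes. The only cosmetic difference is that you phrase the computation as a bilinear form $g\left(\nabla_X Z, Y\right)$ while the paper works with the endomorphism $X\mapsto\nabla_X Z$; the content is identical.
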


\begin{proof}
We prove this by a simple direct calculation:
\begin{eqnarray*}
\nabla_{X}\left(v\nabla w-w\nabla v\right) & = & (D_{X}v)\nabla w-(D_{X}w)\nabla v+v\nabla_{X}\nabla w-w\nabla_{X}\nabla v\\
& = & g\left(\nabla v,X\right)\nabla w-g\left(\nabla w,X\right)\nabla v+v\nabla_{X}\nabla w-w\nabla_{X}\nabla v\\
& = & \left(\nabla w\wedge\nabla v\right)\left(X\right)+v\nabla_{X}\nabla w-w\nabla_{X}\nabla v.
\end{eqnarray*}
This shows that $v\nabla w-w\nabla v$ is a Killing vector field precisely when
\[
v\nabla_{X}\nabla w=w\nabla_{X}\nabla v
\]
which finishes the proof.
\end{proof}

From the lemma above there is a natural map from the exterior square of a subspace $W \subset W(M; q)$ to the Lie algebra of the isometry group of $(M, g)$:
\begin{eqnarray}
\iota : \wedge^2 W & \rightarrow & \mathfrak{iso}(M, g) \\ \nonumber
v\wedge w  & \mapsto & v \nabla w - w\nabla v.\label{eq:iota}
\end{eqnarray}
The map $\iota$ is injective by Proposition \ref{prop:injectionHWP}. In section 7 we shall see that for many interesting examples of $M$ there is a bilinear form $\bar{\mu}$ associated with $W$ that yields a Lie algebra structure on $\wedge^2 W \subset \mathfrak{so}(W, \bmu)$. In Proposition \ref{prop:liealg} we prove that this structure is compatible with the Lie algebra structure of $\mathfrak{iso}(M, g)$.

For the remainder of this section we fix a Riemannian manifold $\left(M,g\right)$ and a quadratic form $q$ on $M$. Furthermore, we select a subspace $W\subset W\left(M;q\right)$. Lemma \ref{lem:Killing-Construction} tells us that we get Killing vector fields from it when $\dim W > 1$.  For each such subspace we define
\[
W_{p}=\left\{ w\in W:w\left(p\right)=0\right\}
\]
A point $p$ is said to be \emph{regular} if the codimension of $W_{p}\subset W$ is one. Otherwise a point is called \emph{singular}. The set of singular points is denoted $S$.

\begin{prop}\label{prop:singularset}
The singular set $S$ is a totally geodesic submanifold of codimension $\dim W$.
\end{prop}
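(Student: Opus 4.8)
The plan is to describe $S$ concretely, then handle the submanifold structure and the totally geodesic property separately.

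First I would note that $W_p$ is the kernel of the linear functional $w\mapsto w(p)$ on $W$, so its codimension is $0$ or $1$; it is $1$ exactly when some $w\in W$ is nonzero at $p$. Hence a point is singular precisely when every $w\in W$ vanishes there, and
\[
S=\set{p\in M : w(p)=0 \text{ for all } w\in W}=\bigcap_{i=1}^{N} w_i^{-1}(0),
\]
where $w_1,\dots,w_N$ is a fixed basis of $W$ and $N=\dim W$. If $S=\emptyset$ there is nothing to prove, so let $p\in S$. Since every $w\in W$ then vanishes at $p$, the evaluation map of Proposition \ref{prop:injectionHWP} restricted to $W$ collapses to $w\mapsto(0,\nabla w|_p)$; its injectivity forces $w\mapsto\nabla w|_p$ to be injective on $W$. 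In particular $\nabla w_1|_p,\dots,\nabla w_N|_p$ are linearly independent. Thus $\Phi=(w_1,\dots,w_N):M\rightarrow\Real^{N}$ is a submersion near each point of $S$, and $S=\Phi^{-1}(0)$ is an embedded submanifold of codimension $N=\dim W$, with
\[
T_pS=\bigcap_{i=1}^{N}\ker(dw_i|_p)=\left(\mathrm{span}\set{\nabla w_1|_p,\dots,\nabla w_N|_p}\right)^{\perp}.
\]

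For the totally geodesic claim I would reuse the ODE from the proof of Proposition \ref{prop:injectionHWP}. Let $\gamma$ be a geodesic with $\gamma(0)=p\in S$ and $\gamma'(0)\in T_pS$, and set $h_i(t)=w_i(\gamma(t))$. Exactly as in that proof, each $h_i$ satisfies the \emph{same} linear second order equation $h_i''=\tfrac{\Theta}{m}h_i$ with $\Theta(t)=q(\gamma'(t),\gamma'(t))$, a coefficient depending only on $\gamma$ and not on $i$. The initial data are $h_i(0)=w_i(p)=0$ because $p\in S$, and $h_i'(0)=g(\nabla w_i|_p,\gamma'(0))=0$ because $\gamma'(0)\in T_pS$ is orthogonal to every $\nabla w_i|_p$. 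By uniqueness for this scalar ODE, $h_i\equiv0$ for every $i$, so $\gamma(t)\in S$ for all $t$. Hence every geodesic tangent to $S$ remains in $S$, which is precisely the statement that $S$ is totally geodesic.

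The step requiring the most care is the first one: ensuring that the normal space of $S$ at $p$ is exactly $\mathrm{span}\set{\nabla w_i|_p}$, so that a tangent direction yields the vanishing initial derivative $h_i'(0)=0$. This is where Proposition \ref{prop:injectionHWP} does the real work, since it simultaneously pins down the codimension and identifies $T_pS$ with the common kernel of the $dw_i$. Once that identification is in hand, the totally geodesic property follows immediately from uniqueness of solutions to the scalar ODE — the same mechanism underlying Proposition \ref{prop:injectionHWP} itself — so I would not expect any genuine obstacle beyond bookkeeping.
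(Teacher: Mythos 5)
Your proof is correct, but it takes a genuinely different route from the paper's, whose entire proof is one line: it follows by induction from Proposition \ref{prop:hypersurfaceHWP}. The paper intersects the zero sets of a basis $w_1,\dots,w_N$ of $W$ one at a time: each $w_{i+1}$ restricts to a solution of the intrinsic Hessian equation on the totally geodesic submanifold built so far (for a totally geodesic $L$ one has $\mathrm{Hess}_L(w|_L)=(\mathrm{Hess}_M w)|_{TL}$), so Proposition \ref{prop:hypersurfaceHWP} applies at each stage and drops the dimension by exactly one; in particular the totally geodesic property is obtained from the vanishing of $\mathrm{Hess}\,w = wq$ on the zero set, not from a geodesic argument. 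You instead argue directly: injectivity of the evaluation map (Proposition \ref{prop:injectionHWP}) at a singular point, where all values vanish, yields linear independence of the gradients $\nabla w_i|_p$, hence the embedded submanifold structure of codimension $\dim W$ and the identification $T_pS=\left(\mathrm{span}\set{\nabla w_i|_p}\right)^{\perp}$ via the regular value theorem; then you re-run the scalar ODE uniqueness argument along geodesics tangent to $S$ to conclude they stay in $S$. What your version buys is that it makes explicit the points the paper's induction leaves to the reader — chiefly that at a point of $S$ the tangential projection of $\nabla w_{i+1}$ onto the intermediate submanifold cannot vanish (equivalently, $\nabla w_{i+1}|_p\notin\mathrm{span}\set{\nabla w_1|_p,\dots,\nabla w_i|_p}$, which is precisely your independence observation), so the inductive step never degenerates on a component. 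The paper's version is shorter given Proposition \ref{prop:hypersurfaceHWP}, while yours is self-contained modulo Proposition \ref{prop:injectionHWP} alone; both ultimately rest on the same injectivity of the evaluation map.
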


\begin{proof}
It follows by induction from Proposition \ref{prop:hypersurfaceHWP} that $S$ has the stated properties.
\end{proof}

At regular points $p\in M-S$ we define
\[
\mathcal{F}_{p}=\left\{ \nabla w:w\in W_{p}\right\}
\]
and let $\mathcal{B}$ be the orthogonal distribution on $M-S$. At a regular point there is a unique $u_{p}\in W$ with
\begin{eqnarray*}
u_{p}\left(p\right) & = & 1\\
\nabla u_{p}|_{p} & \perp & \mathcal{F}_{p}.
\end{eqnarray*}
This orthogonal distribution has the following properties.

\begin{prop}\label{prop:Btotallygeod}
Suppose $\dim W\geq1$ and let $k=\dim W-1$. Then $\mathcal{B}$ is integrable on the regular set $M-S$ and the leaves are totally geodesic of dimension $n-k$. Let $B_{p}$ be the leaf of the foliation $\mathcal{B}$ through $p\in M-S$, then $u_{p}$ is positive on $B_{p}$. Finally $q$ preserves the two distributions.
\end{prop}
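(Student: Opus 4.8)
The plan is to use the Killing fields produced by Lemma \ref{lem:Killing-Construction} to pin down both distributions. First I would record the local structure of $\mathcal{F}$ and $\mathcal{B}$ at a regular point $p$. Decomposing any $v\in W$ as $v=v(p)\,u_{p}+\bigl(v-v(p)u_{p}\bigr)$, the second summand lies in $W_{p}$, so its gradient lies in $\mathcal{F}_{p}$, while $\nabla\bigl(v(p)u_{p}\bigr)|_{p}=v(p)\nabla u_{p}|_{p}\in\mathcal{B}_{p}$ by the defining property of $u_{p}$. Hence the $\mathcal{B}$-component of every gradient is proportional to the single vector $\nabla u_{p}$:
\[ (\nabla v)_{\mathcal{B}}|_{p}=v(p)\,\nabla u_{p}|_{p}. \]
The same bookkeeping applied to $\iota(v\wedge w)=v\nabla w-w\nabla v$ shows the $\nabla u_{p}$-terms cancel, so $\iota(v\wedge w)|_{p}\in\mathcal{F}_{p}$; and taking $v=u_{p}$, $w\in W_{p}$ recovers all of $\mathcal{F}_{p}$. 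Thus on $M-S$ the distribution $\mathcal{F}$ is exactly the span of the Killing fields $\iota(\wedge^{2}W)$, and in particular every such Killing field is a section of $\mathcal{F}$.

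For integrability and the totally geodesic property it suffices to show $\nabla_{X}Y\in\mathcal{B}$ whenever $X,Y$ are sections of $\mathcal{B}$; then $[X,Y]=\nabla_{X}Y-\nabla_{Y}X\in\mathcal{B}$ gives integrability and $(\nabla_{X}Y)^{\perp}=0$ gives that the leaves are totally geodesic of dimension $n-k$. Since $\mathcal{F}$ is spanned by the fields $K=\iota(v\wedge w)$, I only need $g(\nabla_{X}Y,K)=0$. Because $Y\perp\mathcal{F}$ and $K$ is a section of $\mathcal{F}$ we have $g(Y,K)\equiv0$, so $g(\nabla_{X}Y,K)=-g(Y,\nabla_{X}K)$. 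The Hessian equation makes the Killing field's covariant derivative explicit, $\nabla_{X}K=g(\nabla v,X)\nabla w-g(\nabla w,X)\nabla v$ (the $Q$-terms cancel), whence $g(\nabla_{X}Y,K)=-(dv\wedge dw)(X,Y)$. Finally the proportionality above gives $dv(X)=v(p)\,g(\nabla u_{p},X)$ for $X\in\mathcal{B}$, so $dv\wedge dw$ vanishes on $\mathcal{B}$ and $\nabla_{X}Y\in\mathcal{B}$.

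The main obstacle is showing that $q$ preserves the splitting, i.e. $q(\mathcal{B},\mathcal{F})=0$; all first-order manipulations I can write down turn out to be tautological, so I would instead feed the Killing fields into the second-order Killing identity $\nabla^{2}_{Y,X}K=-R(K,Y)X$. Differentiating $\nabla_{X}K=g(\nabla v,X)\nabla w-g(\nabla w,X)\nabla v$ once more and using $\nabla_{Z}\nabla f=fQZ$ for $f\in W$ yields the pointwise identity
\[ R(\xi,Y)X=g(\xi,X)\,QY-q(X,Y)\,\xi\qquad(\xi\in\mathcal{F}). \]
Writing $R(A,B,C,D)=g(R(A,B)C,D)$, I would then extract two expressions for the same component: specializing the arguments to $\mathcal{F}$ and $\mathcal{B}$ gives $R(\eta,\eta',\xi,X_{0})=g(\eta,\xi)\,q(\eta',X_{0})$ and $R(\xi,X_{0},\eta,\eta')=g(\xi,\eta)\,q(X_{0},\eta')-q(\eta,X_{0})\,g(\xi,\eta')$ for $\xi,\eta,\eta'\in\mathcal{F}$ and $X_{0}\in\mathcal{B}$. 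The pair symmetry $R(\xi,X_{0},\eta,\eta')=R(\eta,\eta',\xi,X_{0})$ then forces $q(\eta,X_{0})\,g(\xi,\eta')=0$, and choosing $\xi=\eta'\neq0$ (possible since $\mathcal{F}\neq0$ when $k\geq1$) gives $q(\mathcal{B},\mathcal{F})=0$. I expect deriving the identity for $R(\xi,Y)X$ and correctly matching the indices to be the delicate part.

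With the splitting preserved by $q$, the positivity of $u_{p}$ on $B_{p}$ follows. For $X$ tangent to the totally geodesic leaf $B_{p}$ we have $\nabla_{X}\nabla u_{p}=u_{p}QX\in\mathcal{B}$ since $Q$ preserves $\mathcal{B}$; combined with $(\nabla_{X}N)_{\mathcal{F}}=0$ for the tangential part $N=(\nabla u_{p})_{\mathcal{B}}$, the normal part $T=(\nabla u_{p})_{\mathcal{F}}$ satisfies $(\nabla_{X}T)_{\mathcal{F}}=0$, so $|T|^{2}$ is constant along $B_{p}$. As $T$ vanishes at $p$ by the definition of $u_{p}$, we get $\nabla u_{p}\in\mathcal{B}$ throughout $B_{p}$. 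Then if $u_{p}(q')=0$ at some $q'\in B_{p}\subset M-S$, regularity forces $\nabla u_{p}|_{q'}\in\mathcal{F}_{q'}\cap\mathcal{B}_{q'}=0$, contradicting Proposition \ref{prop:injectionHWP}; hence $u_{p}$ never vanishes on the connected leaf $B_{p}$ and is therefore positive there.
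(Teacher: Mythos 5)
Your proof is correct, but it is organized quite differently from the paper's. The paper never touches curvature: it gets integrability of $\mathcal{B}$ from the elementary fact that brackets of fields orthogonal to a gradient stay orthogonal to it, identifies the leaf $B_{p}$ as a connected component of the common zero set of $W_{p}$ (so that it is totally geodesic directly from Proposition \ref{prop:hypersurfaceHWP}, being an intersection of totally geodesic hypersurfaces), proves positivity in one line (if $u_{p}(x)=0$ for $x\in B_{p}$ then $u_{p}\in W_{x}$, and $W_{x}=W_{p}$ since $W_{p}\subset W_{x}$ with both of codimension one at a regular point, contradicting $u_{p}(p)=1$), and obtains $q(\mathcal{B},\mathcal{F})=0$ from $q(X,V)=g(\nabla_{X}\nabla u_{p},V)$ together with tangency of $\nabla u_{p}$ to the totally geodesic leaf. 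You instead run everything through the Killing-field calculus of Lemma \ref{lem:Killing-Construction}: the pointwise proportionality $(\nabla v)_{\mathcal{B}}=v\,\nabla u_{p}$ kills $dv\wedge dw$ on $\mathcal{B}\times\mathcal{B}$ and gives integrability and the totally geodesic property in one stroke, and for $q$-preservation you derive the tensorial identity $R(\xi,Y)X=g(\xi,X)QY-q(X,Y)\xi$ for $\xi\in\mathcal{F}$ from the second-order Killing equation (I checked the signs; the identity is right, and one can confirm it independently by expanding $R(X,Y)\nabla v$ directly from $\nabla\nabla v=vQ$, where the $\nabla Q$ terms cancel in $v\nabla w-w\nabla v$) and then let the pair symmetry of $R$ force $q(\eta,X_{0})=0$. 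This is heavier machinery than the paper uses, but it buys something real: your ordering proves $q(\mathcal{B},\mathcal{F})=0$ \emph{before} any tangency statement, and then deduces that $\nabla u_{p}$ is tangent to $B_{p}$ along the whole leaf (via constancy of the norm of its normal part), a fact the paper invokes at this stage and only fully establishes later in Lemma \ref{lem:u-structure} via the warped product structure — so your argument cleanly sidesteps any apparent circularity between those two claims, at the cost of a longer positivity argument than the paper's one-liner. The only cosmetic points: you should note that $\dim\mathcal{F}_{p}=k$ (hence leaves of dimension $n-k$) follows from the injectivity of $w\mapsto(w(p),\nabla w|_{p})$ restricted to $W_{p}$, and your displayed identity $(\nabla v)_{\mathcal{B}}|_{p}=v(p)\nabla u_{p}|_{p}$ is used with the base point varying, i.e.\ as $(\nabla v)_{\mathcal{B}}|_{x}=v(x)\nabla u_{x}|_{x}$ at each regular $x$, which is what makes the wedge vanish pointwise; both are exactly as you intend, just worth saying explicitly.
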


\begin{proof}
Recall that $\mathcal{B}$ is the orthogonal distribution to $\mathcal{F}$ and
\[
\mathcal{F}_{p}=\left\{ \nabla w:w\in W_{p}\right\} .
\]
If two vector fields are perpendicular to the gradient of a function, then their Lie bracket is clearly also perpendicular to the gradient. This shows that $\mathcal{B}$ is integrable. Moreover the leaf through $p\in M-S$ is the connected component $B_{p}$ in
\[
\left\{ x\in M-S:w\left(x\right)=0\text{ for all }w\in W_{p}\right\}
\]
that contains $p.$ This is clearly a totally geodesic submanifold. If $u_{p}$ vanishes at $x\in B_{p},$ then $u_{p}\in W_{x}$ and consequently also lies in $W_{p}$, a contradiction.

Note that on $T_{p}M$ we have
\[
q\left(X,V\right)=g\left(\nabla_{X}\nabla u_{p},V\right).
\]
As $\nabla u_{p}$ is tangent to $B_{p}$ and $B_{p}$ is totally geodesic it follows that $q\left(X,V\right)=0$ if $X\in T_{p}B_{p}$ and $V\in\mathcal{F}_{p}$.
\end{proof}

\begin{rem}
Note that when $W=\left\{ 0\right\} $ we have $S=M.$ In the next case where $\dim W=1$ the regular set $M-S$ has two components. Each of these components is a leaf in the totally geodesic foliation $\mathcal{B}$.
\end{rem}
\medskip{}

\begin{rem}
Note that $B_{p}$ need not be complete even if $M$ is. It can however be completed by adding components of $S$ as boundary pieces. Thus the closure $\overline{B}_{p}$ is naturally a manifold with boundary when $S\neq\emptyset.$
\end{rem}

Next we investigate the $\left(\dim W-1\right)$-dimensional distribution $\mathcal{F}$ as well as its extension
\[
\widehat{\mathcal{F}}_{p}=\left\{ \nabla w|_{p}:w\in W\right\} .
\]

\begin{prop} \label{prop:Fintegrable}
Suppose $\dim W=k+1\geq2$. The distribution $\mathcal{F}$ on $M-S$ is integrable and is generated by a set of Killing vector fields on $M$ of dimension $\frac{1}{2}k(k+1)$. Moreover, for any vector fields $Z\in\mathcal{F}$ and $X\in TM$, we have
\[
\nabla_{X}Z\in\widehat{\mathcal{F}}.
\]
\end{prop}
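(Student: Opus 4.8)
The plan is to obtain the Killing fields straight from the map $\iota$ and then extract all three assertions from a single computation of $\nabla_X(v\nabla w-w\nabla v)$. By Lemma \ref{lem:Killing-Construction} each pair $v,w\in W$ yields the Killing field $\iota(v\wedge w)=v\nabla w-w\nabla v$, and since $\iota$ is injective its image $\iota(\wedge^2 W)\subset\mathfrak{iso}(M,g)$ has dimension $\dim\wedge^2 W=\binom{k+1}{2}=\frac12 k(k+1)$. I would first check that these fields generate $\mathcal{F}$ on $M-S$: at a regular point $p$ and for any $v,w\in W$, the combination $h=v(p)w-w(p)v$ lies in $W_p$ and satisfies $\iota(v\wedge w)|_p=\nabla h|_p\in\mathcal{F}_p$, so every field in the image is tangent to $\mathcal{F}$; taking $v=u_p$ and letting $w$ run through $W_p$ gives $\iota(u_p\wedge w)|_p=\nabla w|_p$, which exhausts $\mathcal{F}_p$. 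Hence $\iota(\wedge^2 W)$ spans $\mathcal{F}$ pointwise on $M-S$.

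The computational heart is the covariant derivative of a Killing field $K=v\nabla w-w\nabla v$. From the calculation in the proof of Lemma \ref{lem:Killing-Construction} one has $\nabla_X K=(\nabla w\wedge\nabla v)(X)+v\nabla_X\nabla w-w\nabla_X\nabla v$, and substituting $\nabla_X\nabla w=wQX$ and $\nabla_X\nabla v=vQX$ cancels the last two terms, leaving
\[
\nabla_X K=(\nabla w\wedge\nabla v)(X)=g(\nabla v,X)\nabla w-g(\nabla w,X)\nabla v,
\]
a combination of $\nabla v,\nabla w\in\widehat{\mathcal{F}}$. This immediately gives the ``moreover'' statement: writing a section $Z$ of $\mathcal{F}$ locally as $Z=\sum_\alpha f_\alpha K_\alpha$ in terms of fields $K_\alpha\in\iota(\wedge^2 W)$, we get $\nabla_X Z=\sum_\alpha(Xf_\alpha)K_\alpha+\sum_\alpha f_\alpha\nabla_X K_\alpha$, and both sums lie in $\widehat{\mathcal{F}}$ because $K_\alpha\in\mathcal{F}\subseteq\widehat{\mathcal{F}}$ and $\nabla_X K_\alpha\in\widehat{\mathcal{F}}$.

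For integrability it is enough, since the $K_\alpha$ span $\mathcal{F}$, to show that each bracket $[K_\alpha,K_\beta]$ is again tangent to $\mathcal{F}$. This is the step I expect to be the main obstacle: the formula above only locates $[K_\alpha,K_\beta]$ a priori in $\widehat{\mathcal{F}}_p=\mathcal{F}_p\oplus\langle\nabla u_p\rangle$, so the work is to eliminate the $\nabla u_p$ component. I would test against $X\in\mathcal{B}_p$ and use the Killing equations for both fields to write
\[
g([K_\alpha,K_\beta],X)=g(\nabla_X K_\alpha,K_\beta)-g(\nabla_X K_\beta,K_\alpha).
\]
The elementary fact that makes this vanish is that for every $w\in W$ and every $X\in\mathcal{B}_p$ one has $g(\nabla w,X)|_p=w(p)\,g(\nabla u_p,X)|_p$, since $w-w(p)u_p\in W_p$ has gradient in $\mathcal{F}_p\perp X$. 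Substituting this into the displayed expression for $\nabla_X K$ collapses each of $g(\nabla_X K_\alpha,K_\beta)$ and $g(\nabla_X K_\beta,K_\alpha)$ to the same value $g(\nabla u_p,X)\,g(K_\alpha,K_\beta)$ at $p$, so they cancel. Thus $g([K_\alpha,K_\beta],X)=0$ for all $X\in\mathcal{B}_p$, i.e. $[K_\alpha,K_\beta]\in\mathcal{F}_p$, and Frobenius then yields integrability of $\mathcal{F}$ on $M-S$.
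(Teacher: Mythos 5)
Your proof is correct, and its skeleton matches the paper's: the same Killing fields $K=v\nabla w-w\nabla v$ from Lemma \ref{lem:Killing-Construction}, the same pointwise spanning argument (fields $\iota(v\wedge w)$ have value $\nabla h|_{p}$ with $h=v(p)w-w(p)v\in W_{p}$, and $\iota(u_{p}\wedge w)|_{p}=\nabla w|_{p}$ for $w\in W_{p}$), and integrability obtained by pairing brackets against $\mathcal{B}$. Two steps are handled differently, both to your credit in terms of completeness. For the dimension count you invoke injectivity of $\iota$, which the paper asserts (citing Proposition \ref{prop:injectionHWP}) just after defining $\iota$, so $\dim\iota(\wedge^{2}W)=\frac{1}{2}k(k+1)$ is immediate; the paper instead argues from scratch with two families at a regular point $p$ --- fields $v\nabla w-w\nabla v$ with $v,w\in W_{p}$, which vanish at $p$ and whose covariant derivatives $\nabla w|_{p}\wedge\nabla v|_{p}$ generate the skew-symmetric transformations of $\mathcal{F}_{p}$ (dimension $\frac{1}{2}k(k-1)$), and fields $u_{p}\nabla w-w\nabla u_{p}$ with values $\nabla w|_{p}$ (dimension $k$) --- with the upper bound coming from tangency to a $k$-dimensional distribution; the paper's version is self-contained and its two-family picture is reused later (e.g.\ to see the leaves are maximally symmetric in Theorem \ref{thm:WPHWP}), while yours is shorter. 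For integrability, the paper only verifies involutivity for pure gradients, via $g([\nabla v,\nabla w],X)=-q(v\nabla w-w\nabla v,X)=0$ for $X\in\mathcal{B}$ using Proposition \ref{prop:Btotallygeod}, and then asserts the conclusion for $\mathcal{F}$; strictly speaking the brackets of the spanning Killing fields a priori only land in $\widehat{\mathcal{F}}$, and your cancellation --- $\nabla_{X}K_{\alpha}|_{p}=g(\nabla u_{p},X)\,K_{\alpha}|_{p}$ for $X\in\mathcal{B}_{p}$, obtained from $\nabla_{X}K=g(\nabla v,X)\nabla w-g(\nabla w,X)\nabla v$ and $g(\nabla w,X)|_{p}=w(p)\,g(\nabla u_{p},X)|_{p}$, combined with the Killing-field identity so the two terms in $g([K_{\alpha},K_{\beta}],X)$ cancel --- fills in exactly the step the paper leaves implicit. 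Likewise the ``moreover'' clause, which the paper never spells out, follows in your write-up from the same covariant-derivative formula plus the Leibniz rule, which is how it is meant to be read.
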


\begin{proof}
For a fixed point $p\in M-S$, the space $W_{p}$ is spanned by the following functions
\[
v(p)w-w(p)v,\quad\mbox{ for }v,w\in W.
\]
It follows that the following vectors form a spanning set of the subspace $\mathcal{F}_{p}\subset T_{p}M$:
\[
v(p)\nabla w|_{p}-w(p)\nabla v|_{p},\quad\mbox{ for }v,w\in W.
\]
So we can write the distribution $\mathcal{F}$ as
\[
\mathcal{F}=\left\{ v\nabla w-w\nabla v:v,w\in W\right\} .
\]
Note that $\widehat{\mathcal{F}}$ might not be a distribution on $M-S$ as the dimension of $\widehat{\mathcal{F}}_{p}$ can be either $k+1$ or $k$. It agrees with $\mathcal{F}_{p}$ for those $p$ where its dimension is $k$. At the points where its dimension is $k+1$ the complementary subspace of $\mathcal{F}_{p}\subset\widehat{\mathcal{F}}_{p}$ is one-dimensional and spanned by $\nabla u_{p}|_{p}$.

Using $\mathcal{F}=\left\{ v\nabla w-w\nabla v:v,w\in W\right\} $ we see that when $X\in\mathcal{B}$, Proposition \ref{prop:Btotallygeod} implies
\[
g\left(\left[\nabla v,\nabla w\right],X\right)=-q\left(v\nabla w-w\nabla v,X\right)=0
\]
In particular, both $\widehat{\mathcal{F}}$ and $\mathcal{F}$ are integrable where they are distributions. Moreover, we know from Lemma \ref{lem:Killing-Construction} that $\mathcal{F}$ is spanned by Killing vector fields.

Finally we calculate the dimension of this set of Killing vector fields on $M$. First note that it can't exceed $\frac{1}{2}k\left(k+1\right)$ as the fields are all tangent to a $k$-dimensional distribution. Next note that at $p\in M-S$ we have two types of Killing vector fields
\[
v\nabla w-w\nabla v,\, v,w\in W_{p}
\]
and
\[
u_{p}\nabla w-w\nabla u_{p},\, w\in W_{p}.
\]
The first type of Killing vector field vanishes at $p$ and has covariant derivative $\nabla w|_{p}\wedge\nabla v|_{p}$ which defines a skew symmetric transformation that leaves $\mathcal{F}_{p}$ invariant. Moreover, as the skew symmetric transformations on $\mathcal{F}_{p}$ are generated by such transformations these Killing vector fields generate a subspace of dimension at least
\[
\frac{1}{2}k\left(k-1\right).
\]
The second type of Killing vector field has value $\nabla w|_{p}$ at $p$. Thus these Killing vector fields will generate a complementary subspace of dimension at least $k$. This shows that the Killing vector fields $\left\{ v\nabla w-w\nabla v:v,w\in W\right\}$ generate a space of Killing vector fields of dimension at least $\frac{1}{2}k\left(k+1\right)$.
\end{proof}

\begin{cor}\label{cor:wedgeWliealg}
The image $\iota(\wedge^2 W) \subset \mathfrak{iso}(M, g)$ is a Lie subalgebra.
\end{cor}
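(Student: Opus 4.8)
The plan is to identify the image $\iota(\wedge^2 W)$ with the space $\mathcal{K}$ of \emph{all} Killing fields on $M$ that are tangent to the distribution $\mathcal{F}$ along the regular set $M-S$, and then to observe that $\mathcal{K}$ is visibly closed under the bracket. The identification will come from a dimension count. Since $\iota$ is injective by Proposition \ref{prop:injectionHWP} and $\dim W = k+1$, the image has dimension $\dim\wedge^2 W = \frac{1}{2}k(k+1)$. By Proposition \ref{prop:Fintegrable} every element of $\iota(\wedge^2 W)$ is a Killing field tangent to $\mathcal{F}$, so $\iota(\wedge^2 W)\subseteq\mathcal{K}$.

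The first real step is to bound $\dim\mathcal{K}$ from above by $\frac{1}{2}k(k+1)$; this is exactly the estimate already extracted in the proof of Proposition \ref{prop:Fintegrable}. A Killing field is determined by its $1$-jet $(V_p,(\nabla V)_p)$ at a single point of $M-S$ (note that $S$ has codimension $\dim W\geq 2$, so $M-S$ is connected and dense). For $V\in\mathcal{K}$ we have $V_p\in\mathcal{F}_p$, which already costs at most $k$ dimensions. The skew endomorphism $(\nabla V)_p$ splits into blocks according to $T_pM=\mathcal{F}_p\oplus\mathcal{B}_p$: its $\mathcal{B}_p\to\mathcal{B}_p$ block vanishes because the leaves of $\mathcal{B}$ are totally geodesic (Proposition \ref{prop:Btotallygeod}) and $V_p\perp\mathcal{B}_p$, while its mixed block is determined by $V_p$ together with the fixed geometry, again using $V\perp\mathcal{B}$. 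Only the $\mathcal{F}_p\to\mathcal{F}_p$ block is free, contributing at most $\dim\mathfrak{so}(\mathcal{F}_p)=\frac{1}{2}k(k-1)$. Hence $\dim\mathcal{K}\leq k+\frac{1}{2}k(k-1)=\frac{1}{2}k(k+1)$, and combined with the containment above this forces $\iota(\wedge^2 W)=\mathcal{K}$.

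With the identification in hand the conclusion is immediate: if $X,Y\in\mathcal{K}$ then $[X,Y]$ is again a Killing field, and since $\mathcal{F}$ is integrable on $M-S$ (Proposition \ref{prop:Fintegrable}) the bracket of two fields tangent to $\mathcal{F}$ is tangent to $\mathcal{F}$; thus $[X,Y]\in\mathcal{K}=\iota(\wedge^2 W)$. Therefore $\iota(\wedge^2 W)$ is a Lie subalgebra of $\mathfrak{iso}(M,g)$.

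I expect the only delicate point to be the upper bound $\dim\mathcal{K}\leq\frac{1}{2}k(k+1)$, specifically verifying that the mixed and $\mathcal{B}$-diagonal blocks of $(\nabla V)_p$ introduce no new parameters. This rests on the total geodesy of the $\mathcal{B}$-leaves and is precisely the content already used in Proposition \ref{prop:Fintegrable}, so in the write-up I would simply invoke that proposition for the bound rather than repeat the block-by-block computation. Everything else is a formal consequence of injectivity of $\iota$ and the integrability of $\mathcal{F}$.
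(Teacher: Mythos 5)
Your proposal is correct and follows essentially the same route as the paper: both arguments identify $\iota(\wedge^2 W)$, which has the maximal dimension $\frac{1}{2}k(k+1)$, with the full space of Killing fields tangent to the integrable distribution $\mathcal{F}$, and then conclude bracket-closure from the fact that brackets of Killing fields tangent to an integrable distribution remain Killing and tangent. The only difference is presentational: you spell out the $1$-jet block computation behind the upper bound $\dim\mathcal{K}\leq\frac{1}{2}k(k+1)$ and argue directly, whereas the paper phrases it as a contradiction and leaves that bound implicit in the proof of Proposition \ref{prop:Fintegrable}.
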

\begin{proof}
We already know that $\iota(\wedge^2 W)$ has dimension $\frac{1}{2}k(k+1)$. If it is not a Lie subalgebra of $\mathfrak{iso}(M, g)$, then it would generate a strictly larger space $V$ which is a Lie subalgebra. Since $\mathcal{F}$ is integrable, we have
\[
\iota(\wedge^2 W)\subsetneqq V \subset \mathcal{F}.
\]
This contradicts the fact that $\mathcal{F}$ has dimension $\frac{1}{2}k(k+1)$.
\end{proof}

We can now prove our Theorem A from the introduction.
\begin{thm}\label{thm:WPHWP}
Let $\left(M^{n},g\right)$ be a complete simply connected Riemannian manifold with a symmetric tensor $q$ and $W$ a subspace of  $W(M;q)$. If $\dim W=k+1\geq2$, then
\[
M=B\times_{u}F
\]
where $u$ vanishes on the boundary of $B$. Moreover, $F$ is either the k-dimensional unit sphere $\mathbb{S}^{k}\left(1\right)\subset\mathbb{R}^{k+1}$, k-dimensional Euclidean space $\mathbb{R}^{k}$, or the k-dimensional hyperbolic space $H^{k}$. In the first two cases $k\geq1$ while in the last $k>1$.
\end{thm}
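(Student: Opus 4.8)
The plan is to work on the regular set $M-S$, where Propositions \ref{prop:Btotallygeod} and \ref{prop:Fintegrable} already supply two orthogonal foliations: the totally geodesic horizontal foliation $\mathcal{B}$, and the integrable vertical foliation $\mathcal{F}$, the latter spanned by the $\frac{1}{2}k(k+1)$-dimensional space of Killing fields $\iota(\wedge^{2}W)$. The strategy is to show that $\mathcal{F}$ is a \emph{spherical} foliation, i.e.\ its leaves are totally umbilic with mean curvature vector field of the form $-\nabla\log u$; together with $\mathcal{B}$ being totally geodesic, this is precisely the local criterion for a warped product, and completeness together with simple-connectedness will promote it to a global splitting $M-S = B^{\circ}\times_{u}F$. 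The Killing fields then force the fibers to be space forms, and a separate analysis across $S$ closes up the metric and pins down the fiber type.

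The key computation is the umbilicity of $\mathcal{F}$. Fix a regular point $p$ and the distinguished function $u=u_{p}\in W$ with $u(p)=1$ and $\nabla u|_{p}$ horizontal. Choose $w_{1},\dots,w_{k}\in W_{p}$ so that $\nabla w_{1}|_{p},\dots,\nabla w_{k}|_{p}$ is an orthonormal basis of $\mathcal{F}_{p}$, which is possible by the injectivity in Proposition \ref{prop:injectionHWP}. Set $Z_{i}=u\nabla w_{i}-w_{i}\nabla u$, which are vertical Killing fields with $Z_{i}(p)=\nabla w_{i}|_{p}$. Using $\mathrm{Hess}\,w=wq$ one finds, for horizontal $X$,
\[
\nabla_{X}Z_{i}=g(\nabla u,X)\nabla w_{i}-g(\nabla w_{i},X)\nabla u ,
\]
which at $p$, where $\nabla w_{i}|_{p}\perp X$, reduces to $g(\nabla u,X)\,\nabla w_{i}|_{p}$. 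Since $Z_{i}$ is Killing, $g(\nabla_{Z_{i}}Z_{j},X)=-g(\nabla_{X}Z_{j},Z_{i})$, and evaluating at $p$ gives $g(\nabla_{Z_{i}}Z_{j},X)|_{p}=-\delta_{ij}\,g(\nabla u,X)|_{p}$. Hence the second fundamental form of the $\mathcal{F}$-leaf at $p$ is $\mathrm{II}(U,V)=-g(U,V)\,\nabla u|_{p}$: the leaf is totally umbilic with mean curvature vector $-\nabla u_{p}=-\nabla\log u$ at $p$, a horizontal gradient. As $p$ varies, the normalizations $u_{p}$ patch into a single positive warping function $u$ on $M-S$, and the mean curvature one-form is exact, so $\mathcal{F}$ is spherical.

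With $\mathcal{B}$ totally geodesic and $\mathcal{F}$ spherical, a standard warped product decomposition yields locally $g=g_{B}+u^{2}g_{F}$, and completeness and simple-connectedness upgrade this to a global identification of $M-S$ with a warped product over a leaf $B^{\circ}$ of $\mathcal{B}$. Because the restricted Killing fields endow each fiber with the maximal isometry algebra of dimension $\frac{1}{2}k(k+1)$, the fibers have constant curvature, so the model fiber $(F,g_{F})$ is a complete, simply connected space form, i.e.\ $\mathbb{S}^{k}$, $\mathbb{R}^{k}$ or $H^{k}$; rescaling puts the sphere case at radius one. Finally I would extend the structure across the singular set $S$, which by Proposition \ref{prop:singularset} has codimension $k+1$: on approach to $S$ the warping function satisfies $u\to 0$, and smoothness of $g$ at the collapsing fibers forces $|\nabla u|=1$ there and the fiber to be a round sphere, so $S$ becomes the boundary $\partial B$ and $F=\mathbb{S}^{k}(1)$ whenever $\partial B\neq\emptyset$. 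The constraints on $k$ come from the fact that $H^{1}$ is isometric to $\mathbb{R}^{1}$, so the hyperbolic fiber genuinely occurs only for $k>1$, while spherical and Euclidean fibers occur for all $k\geq1$. The main obstacle is the globalization: passing from the pointwise umbilic data to a genuine global warped product on complete, simply connected $M$, and above all extending the metric smoothly across $S$, where the warped product degenerates and the round sphere fiber is forced.
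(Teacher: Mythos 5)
Your local computation is correct, and it is in fact a more elementary route to umbilicity than the one in the paper: there, umbilicity of the leaves of $\mathcal{F}$ is deduced from invariance of the second fundamental form under the group $G\subset\mathrm{Iso}(M,g)$ obtained by integrating the Killing algebra of Proposition \ref{prop:Fintegrable}, whereas your pointwise argument with $Z_{i}=u\nabla w_{i}-w_{i}\nabla u$ gets $\mathrm{II}(U,V)=-g(U,V)\nabla u_{p}|_{p}$ directly from the Hessian equation and the Killing property, consistent with what Lemma \ref{lem:u-structure} later shows.

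However, everything after that computation is asserted at precisely the points where the paper has to work, and as stated some steps would fail. First, ``completeness and simple-connectedness upgrade this to a global identification'' is not available: the foliations live only on $M-S$, which is never complete when $S\neq\emptyset$, and when $k=1$ the singular set has codimension $k+1=2$ by Proposition \ref{prop:singularset}, so $M-S$ need not be simply connected (think of $\mathbb{R}^{2}$ minus a point). This is why the paper's proof integrates the Killing fields to the group $G$, proves $\pi_{1}:M\rightarrow M/G$ is a Riemannian submersion via a monodromy argument that needs $M-S$ simply connected (or \cite[Theorem A]{BH} when $S=\emptyset$), and then handles $k=1$, $S\neq\emptyset$ by a separate argument using that $G$ is Abelian with circle orbits to show the principal isotropy is trivial; your proposal never constructs $G$, yet it is also what makes the fibers homogeneous space forms and acts transitively on the horizontal leaves. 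Relatedly, the patching of the normalizations $u_{p}$ into one positive function with exact mean curvature one-form is exactly the basic-ness/invariance statement the paper extracts from $G$ before invoking \cite[Chapter 9.J]{Besse}; it does not follow merely from the pointwise formula, since a priori the scalings on different leaves need not cohere. Second, the extension across $S$ is not a smoothness issue to be imposed: the metric $g$ is already smooth on $M$; the missing step is global injectivity $B_{p}\cap F_{p}=\{p\}$, i.e.\ that horizontal leaves do not return to the same fiber, which the paper proves by showing $\mathrm{int}(M/G)$ is simply connected (lifting loops horizontally and homotoping them into a fiber). Finally, a small slip: the fiber need not be simply connected --- $F=\mathbb{S}^{1}(1)$ occurs when $k=1$, and excluding $\mathbb{RP}^{k}$ (which is a priori a possible maximally symmetric leaf) itself requires the global splitting, so ``complete, simply connected space form'' cannot be concluded leafwise.
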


\begin{proof}
Proposition \ref{prop:Fintegrable} shows that the set of Killing vector fields on $M$ that are tangent to the foliation $\mathcal{F}$ is a subalgebra of the space of all Killing vector fields on $M$ of dimension $\frac{1}{2}k(k+1)$. As $M$ is complete this means that there'll be a corresponding connected subgroup $G\subset\mathrm{Iso}\left(M,g\right)$. First observe that as the Killing vector fields $v\nabla w-w\nabla v$ vanish on $S$, the group $G$ fixes $S$. Next note that $G$ forces the leaves of the foliation $\mathcal{F}$ to be maximally symmetric. In particular, they are complete connected space forms, which are either simply connected or possibly circles or real projective spaces see e.g., \cite[page 190]{Petersen}. From what we show below it'll be clear that the case of real projective spaces will not occur here as $M$ is simply connected.

We wish to show that the quotient map $\pi_1:M \rightarrow M/G$ is a Riemannian submersion on $M -S$.  When there is no singular set, this follows from  \cite[Theorem A]{BH}.   In fact, due to the group action $G$ the proof of \cite[Theorem A]{BH} is somewhat simpler in our case and can be adapted to work in case $S \neq \emptyset$ and $k>1$.  That is, the case where  $M-S$ is connected and simply connected (see also \cite[p. 203]{O'Neill} for a similar construction in the context of covering spaces).

First note that, when at least one fiber $F_{p}$ is compact,  $G$ itself is compact and so the action is proper. In particular, if $S\neq\emptyset$, then the fibers $F_{p}$ for $p$ near $x\in S$ can be identified with the space of unit normal vectors to $x\in S$ and so the fibers are compact.

For each $p\in M-S$ there is a neighborhood $U_{p}$ and a uniquely defined Riemannian submersion $U_{p}\rightarrow B_{p}$ which projects along the leaves of $\mathcal{F}$. Next note that any two vertical leaves can be connected by a horizontal geodesic in $M-S$. This shows that $G$ acts transitively on the leaves $B_{p}$, $p\in M-S$. Now fix a specific horizontal leaf $B$. By using elements in $G$ we can then construct Riemannian submersions $f_{p}:U_{p}\rightarrow B$ with the properties that: If $U_{p_{1}}\cap U_{p_{2}}\neq\emptyset$, then there exits $h\in G$ such that $h\left(B\right)=B$ and $h\circ f_{p_{1}}=f_{p_{2}}$. Since $M-S$ is connected and simply connected a standard monodromy argument then shows that we obtain a global Riemannian submersion $f:M-S\rightarrow B$. Moreover, $\bar{B}=M/G$ so the natural projection $\pi_{1}:M\rightarrow M/G$ is a Riemannian submersion when restricted to $M-S$.

This leaves us with the situation where $k=1$ and $S\neq\emptyset$. In particular, all fibers are circles. In this case $G$ is Abelian. We start by observing in general that if some $h\in G$ fixes all points in a fiber $F_{p}$ and $S\neq\emptyset$, then $h$ acts trivially. Let $x\in S$ be the closest point to $p$. Then $h$ must fix the unique shortest geodesic from $x$ to $p$ in $B_{p}$. Note that it is unique as it is normal to $S$ in $\bar{B}_{p}$. Next observe that we can move this geodesic by isometries from $G$ to get minimal connections from $x$ to all other points in the orbit $F_{p}$. Since $h$ fixes all of $F_{p}$ we see that $h$ not only fixes $S$ but also all normal directions $\nu_{x}S$. Thus $h$ acts trivially. In case $G$ is Abelian this implies that all principal isotropy groups are trivial. In particular, $\pi_{1}:M\rightarrow M/G$ is a Riemannian submersion when restricted to $M-S$.

In all cases we now have that  the quotient map $\pi_1:M \rightarrow M/G$ is a Riemannian submersion on $M -S$.  Since $G\subset\mathrm{Iso}\left(M,g\right)$ the leaves of $\mathcal{F}$ have the property that their second fundamental forms are also invariant under $G$. This implies that the leaves are totally umbilic with a mean curvature vector that is invariant under the group action. As the orthogonal foliation is totally geodesic it follows that the mean curvature vector is basic. It then follows from \cite[Chapter 9.J]{Besse} that these two foliations yield a local warped product structure on $M - S$.

Since $G$ fixes $S$, to obtain a global warped product structure we need only show that $B_{p}\cap F_{p}=\left\{ p\right\}$ on $M -S$.  When there is no singular set we can again  appeal to   \cite[Theorem A]{BH}which says that in this case $M$ is diffeomorphic to $B_{p}\times F_{p}$.

When $S \neq \emptyset$ note that the quotient map $\pi_1:M \rightarrow M/G$ forces $M/G$ to be a Riemannian manifold with totally geodesic boundary $S$. In particular, $M/G$ is homotopy equivalent to its interior. In this situation we know initially only that $\pi_{1}$ is a Riemannian covering map when restricted to horizontal leaves. However, let $\gamma:\left[0,1\right]\rightarrow\mathrm{int}\left(M/G\right)$ be a loop and consider a horizontal lift $\bar{\gamma}:\left[0,1\right]\rightarrow M-S$. As $M$ is simply connected $\bar{\gamma}$ is homotopic to a path in the fiber
\[
\pi_{1}^{-1}\left(\gamma\left(0\right)\right)=\pi_{1}^{-1}\left(\gamma\left(1\right)\right)
\]
through a homotopy that keeps the endpoints fixed. This in turn shows that $\gamma$ is homotopic to a point in $M/G$. Thus $\mathrm{int}\left(M/G\right)$ is simply connected and we see that $\pi_{1}$ is an isometry when restricted to horizontal leaves.   In particular,  for all $p\in M-S$ we have $B_p \cap F_p = \left \{ p \right \}$.
\end{proof}

\begin{cor}\label{cor:ridigidity}
When $\mathcal{F}=\widehat{\mathcal{F}}$, i.e., the foliation $\mathcal{F}$ is totally geodesic, the manifold $M$ is isometric to a product.
\end{cor}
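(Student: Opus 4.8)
The plan is to leverage the warped product structure $M = B\times_u F$ furnished by Theorem \ref{thm:WPHWP} and to translate the hypothesis $\mathcal{F}=\widehat{\mathcal{F}}$ into the single statement that the warping function $u$ is constant; a warped product with constant warping function is a Riemannian product, which gives the conclusion. All of the analysis takes place on the regular set $M-S$, where the leaves of $\mathcal{F}$ are exactly the fibers $\{b\}\times F$ and $\mathcal{B}$ is the horizontal base distribution.

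First I would record the geometric meaning of $\widehat{\mathcal{F}}$ relative to $\mathcal{F}$. At a regular point $p$ one always has $\mathcal{F}_p\subseteq\widehat{\mathcal{F}}_p$, and the proof of Proposition \ref{prop:Fintegrable} shows that the complement of $\mathcal{F}_p$ in $\widehat{\mathcal{F}}_p$ is either zero or the line spanned by $\nabla u_p|_p$, which is horizontal, i.e. orthogonal to $\mathcal{F}_p$ by the defining property of $u_p$. Consequently $\mathcal{F}=\widehat{\mathcal{F}}$ is equivalent to $\nabla u_p|_p=0$ at every regular $p$: if $\nabla u_p|_p\in\mathcal{F}_p$ while also $\nabla u_p|_p\perp\mathcal{F}_p$, it must vanish, and conversely vanishing of $\nabla u_p|_p$ removes the extra direction so that $\widehat{\mathcal{F}}_p=\mathcal{F}_p$.

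Next I would justify the identification of this condition with total geodesy of $\mathcal{F}$. By Proposition \ref{prop:Fintegrable}, for $Z,Z'\in\mathcal{F}$ the field $\nabla_Z Z'$ lies in $\widehat{\mathcal{F}}$, so the second fundamental form $II(Z,Z')=(\nabla_Z Z')^{\mathcal{B}}$ of a fiber takes values in the at-most-one-dimensional space $\widehat{\mathcal{F}}\cap\mathcal{B}$, which is spanned by $\nabla u_p|_p$. In the proof of Theorem \ref{thm:WPHWP} the fibers are shown to be totally umbilic with basic mean curvature vector, and in warped-product terms this mean curvature is proportional to the horizontal gradient $\nabla u$. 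Hence $\mathcal{F}=\widehat{\mathcal{F}}$, equivalently $\nabla u_p|_p=0$, holds precisely when the fibers are totally geodesic, which in turn is equivalent to $\nabla u\equiv 0$ on $M-S$. This step, combining the umbilicity computation with Proposition \ref{prop:Fintegrable}, is the one place where care is needed; the rest is routine.

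Finally I would conclude by ruling out boundary and recognizing the product. Since $B$ is connected, $\nabla u\equiv 0$ forces $u$ to equal a constant $c$ on $B$. If $\partial B\neq\emptyset$ then $u$ vanishes there, contradicting $u\equiv c>0$ on the interior; therefore $\partial B=\emptyset$ (equivalently $S=\emptyset$), the warped product is everywhere smooth, and $u\equiv c$. Then $g=g_B+c^2 g_F$ is the product metric of $(B,g_B)$ and $(F,c^2 g_F)$, so $(M,g)$ is isometric to a Riemannian product, as claimed.
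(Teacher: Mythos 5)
Your proposal is correct and coincides with the argument the paper leaves implicit: the corollary is stated without a separate proof as an immediate consequence of the warped product structure in Theorem \ref{thm:WPHWP}, and your chain --- $\mathcal{F}=\widehat{\mathcal{F}}$ iff $\nabla u_{p}|_{p}=0$ at every regular point (using the description of $\widehat{\mathcal{F}}_{p}$ from the proof of Proposition \ref{prop:Fintegrable}), iff the fibers are totally geodesic, iff $\nabla u\equiv 0$ --- followed by ruling out $\partial B\neq\emptyset$ and recognizing $g=g_{B}+c^{2}g_{F}$ as a product metric, is exactly the intended reasoning. Your choice to argue via the umbilicity and basic mean curvature statement from the proof of Theorem \ref{thm:WPHWP}, rather than invoking the later identity $\nabla u_{p}=\frac{1}{u}u_{p}\nabla u$ of Lemma \ref{lem:u-structure}, correctly keeps the argument self-contained at the point in the paper where the corollary appears.
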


\medskip
\section{Properties of the Quadratic Form}

Assume below that we have a complete simply connected Riemannian $n$-manifold with $\dim W\left(M;q\right)\geq2$ and a fixed subspace $W\subset\dim W\left(M;q\right)$ with $\dim W=k+1$ and $k\geq1$. Theorem \ref{thm:WPHWP} then tells us that
\[
\left(M,g\right)=\left(B\times F,g_{B}+u^{2}g_{F}\right)
\]
for some function $u:B\rightarrow[0,\infty)$ that vanishes only on $\partial B$. In this section we give the details of how to show that the base is a base space and the fiber a fiber space.

Note that we shall not distinguish between fields on $B$ and their corresponding horizontal lifts to $M$. However, we will be careful with notation in regards to derivatives of such fields. We'll use $A_1$, $A_2$ as vector fields on $M$, $X,Y$ as horizontal fields and $U,V$ as vertical fields. Also we shall for convenience use $u$ for its pullback to $M$.

The vertical isometries from $G$ act as isometries on $M$ and so there is a function $\rho:B\rightarrow\mathbb{R}$ such that
\[
\mathrm{Ric}_{M}\left(V\right)=\rho V,\: V\in\mathcal{F}.
\]
From \cite[Chapter 9]{Besse} we obtain the following facts for warped products: the vertical Ricci curvature $\rho$ is related to the Einstein constant $\rho^{F}$ for $F$ by
\[
\rho u^{2}+u\Delta_{B}u+(k-1)|\nabla u|^{2}=\rho^{F}.
\]
The horizontal Ricci curvatures satisfy
\[
\mathrm{Ric}_{M}(X,Y)=\mathrm{Ric}_{B}(X,Y)-\frac{k}{u}(\mathrm{Hess}_{B}u)(X,Y).
\]
The extrinsic geometry of the leaves of $\mathcal{F}$ are governed by
\begin{eqnarray*}
g\left(\nabla_{V}V,X\right) & = & -\frac{1}{u}g\left(X,\nabla u\right)g\left(V,V\right).
\end{eqnarray*}
In particular,
\[
\nabla_{V}\nabla u=\frac{\left|\nabla u\right|^{2}}{u}V.
\]

The goal here is to show that $q$ depends only on $\mathrm{tr}Q$, where $q\left(A_1,A_2\right)=g\left(Q(A_1),A_2\right)$, and the Ricci curvatures of $B$ and $M$.

We start by relating the elements in $W$ to the warping function $u$.

\begin{lem} \label{lem:u-structure}
For any $w\in W$ we have
\[
g\left(\nabla w,\nabla u\right)=\frac{\left|\nabla u\right|^{2}}{u}w.
\]
Moreover on $B_{p}$, the horizontal leaf through $p$, we have $u_{p}=\frac{u}{u\left(p\right)}$.
\end{lem}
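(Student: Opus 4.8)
The plan is to establish the geometric identification $u_p = u/u(p)$ on $B_p$ first, since the Hessian-type identity then drops out by a one-line linear-algebra argument. I would fix a regular point $p \in M - S$ and let $B_p$ be its horizontal leaf. Recall from Proposition \ref{prop:Btotallygeod} that $B_p$ is the connected component through $p$ of the common zero set of $W_p$, that it is totally geodesic, and that $u_p > 0$ on it; since $u$ is pulled back from $B$ it is basic, and every point of $B_p$ shares the constant fiber coordinate $y_0 = \pi_2(p)$.

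For the identification I would use the Killing field coming from $\iota$. Choose $w \in W_p$ with $\nabla w|_p \neq 0$, which exists because $\dim \mathcal{F}_p = k \geq 1$. On $B_p$ every element of $W_p$ vanishes, so $w \equiv 0$ there, and the defining equation $\nabla_X \nabla w = w\,Q(X)$ shows that $\nabla w$ is parallel along the totally geodesic leaf $B_p$; hence $|\nabla w| \equiv c$ is a positive constant on $B_p$, and $\nabla w$ is vertical. Now $K = u_p \nabla w - w \nabla u_p = \iota(u_p \wedge w)$ is a Killing field tangent to $\mathcal{F}$, and on $B_p$ it reduces to $K = u_p \nabla w$, so that $|K| = c\, u_p$ there. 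On the other hand, $K$ lies in the Lie algebra of the group $G$ built in Theorem \ref{thm:WPHWP}, which acts on $M = B \times_u F$ by isometries of the fiber factor; thus $K$ is the lift of a Killing field $K_F$ of $(F, g_F)$ and satisfies $|K|_g = u \cdot |K_F|_{g_F}$. Restricting to $B_p$, where the fiber coordinate is the constant $y_0$, gives $|K| = u \cdot |K_F|_{g_F}(y_0)$, a fixed multiple of $u$. Comparing the two expressions for $|K|$ and normalizing at $p$, where $u_p(p) = 1$, yields $u_p = u/u(p)$ on all of $B_p$.

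The Hessian identity then follows pointwise. Fix $w \in W$ and a regular point $p$, and decompose $w = w(p)\,u_p + w_0$ with $w_0 = w - w(p)u_p \in W_p$. By construction $\nabla u_p|_p \perp \mathcal{F}_p$ is horizontal while $\nabla w_0|_p \in \mathcal{F}_p$ is vertical, so $g(\nabla w, \nabla u_p)|_p = w(p)\,|\nabla u_p|_p^2$. Feeding in $u = u(p)\,u_p$ on $B_p$, hence $\nabla u|_p = u(p)\nabla u_p|_p$ and $|\nabla u|_p^2 = u(p)^2 |\nabla u_p|_p^2$, converts this into $g(\nabla w, \nabla u)|_p = \frac{|\nabla u|^2}{u} w$ at $p$. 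Since $p$ was an arbitrary regular point and both sides are continuous, the identity holds on $M - S$ and extends to all of $M$.

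The main obstacle is the norm computation in the second paragraph: one must verify both that $|\nabla w|$ is genuinely constant along $B_p$ (which rests on $w$ vanishing on the leaf together with the totally geodesic property) and that a vertical Killing field has length exactly $u$ times a function of the fiber variable, i.e. that it descends from a Killing field of the standard fiber. These two facts are precisely what force the warping factor to coincide with $u_p$; the remaining steps are routine.
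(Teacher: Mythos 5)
Your argument is correct, but it runs in the opposite direction from the paper's. The paper proves the gradient identity first, for all $w\in W$ simultaneously: since $\nabla u$ is basic and $G$-invariant it commutes with each Killing field $v\nabla w-w\nabla v$, and comparing $\nabla_{v\nabla w-w\nabla v}\nabla u=\frac{\left|\nabla u\right|^{2}}{u}\left(v\nabla w-w\nabla v\right)$ with the direct expansion $g\left(\nabla v,\nabla u\right)\nabla w-g\left(\nabla w,\nabla u\right)\nabla v$ (the Hessian terms cancel by $\mathrm{Hess}\,w=wq$) yields the identity; it then identifies $u_{p}$ by showing $g\left(\nabla u_{p},\nabla w\right)\equiv0$ along $B_{p}$ for $w\in W_{p}$, so that $\nabla u_{p}\in\widehat{\mathcal{F}}\cap\mathcal{B}$, a line spanned by $\nabla u$, whence $\nabla u_{p}=\frac{u_{p}}{u}\nabla u$ -- note this step applies the gradient identity to $w=u_{p}$, so the paper's order of proof is forced, while your reversed order is free of circularity. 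You instead pin down $u_{p}$ first by comparing two norms of the single Killing field $K=\iota\left(u_{p}\wedge w\right)$, $w\in W_{p}$: intrinsically $\left|K\right|=\left|\nabla w\right|u_{p}$ on $B_{p}$ with $\left|\nabla w\right|$ constant there (correct, since $\nabla_{X}\nabla w=wQ\left(X\right)=0$ along the leaf; total geodesy of $B_{p}$ is not even needed), and extrinsically $\left|K\right|=u\cdot\left|K_{F}\right|_{g_{F}}\left(y_{0}\right)$. The fact you flag -- that a vertical Killing field of $B\times_{u}F$ descends to a Killing field of $\left(F,g_{F}\right)$ -- is true and quickly checked, either from the mixed component of the Killing equation (writing $K=\sum f_{i}V_{i}$ in lifted vertical fields, one finds $g\left(\nabla_{X}K,U\right)+g\left(\nabla_{U}K,X\right)=\sum\left(Xf_{i}\right)g\left(V_{i},U\right)$, forcing the $f_{i}$ to be functions on $F$ alone), or from the observation, implicit in Theorem \ref{thm:WPHWP}, that $G$ preserves each fiber and both foliations and hence acts as $\left(b,y\right)\mapsto\left(b,\phi_{h}\left(y\right)\right)$ with $\phi_{h}\in\mathrm{Iso}\left(F,g_{F}\right)$. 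One word of care in your last step: $u=u\left(p\right)u_{p}$ holds only on the submanifold $B_{p}$, so differentiating gives a priori only the components of the gradients tangent to $B_{p}$; but $\nabla u$ is horizontal ($u$ is basic) and $\nabla u_{p}|_{p}\perp\mathcal{F}_{p}$ by definition, so both gradients at $p$ are tangent to $B_{p}$ and your equality $\nabla u|_{p}=u\left(p\right)\nabla u_{p}|_{p}$ stands; likewise your closing claim that the identity "extends to all of $M$" should be read for the cleared form $u\,g\left(\nabla w,\nabla u\right)=\left|\nabla u\right|^{2}w$, since the right-hand side as written is undefined on $S$. On balance, your route is slightly longer and needs the descent lemma, but it is more geometric -- it exhibits the warping function as the length of a vertical Killing field along a horizontal leaf -- whereas the paper's commutator computation obtains the identity for all $w$ at once in a few lines.
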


\begin{proof}
First note that $\nabla u$ is basic and invariant under the group action $G$, and thus commutes with the Killing vector fields $v\nabla w-w\nabla v$. This shows
\begin{eqnarray*}
\nabla_{\nabla u}\left(v\nabla w-w\nabla v\right) & = & \nabla_{v\nabla w-w\nabla v}\nabla u\\
& = & \frac{\left|\nabla u\right|^{2}}{u}\left(v\nabla w-w\nabla v\right)
\end{eqnarray*}
but the left hand side is also
\begin{eqnarray*}
\nabla_{\nabla u}\left(v\nabla w-w\nabla v\right) & = & g\left(\nabla v,\nabla u\right)\nabla w-g\left(\nabla w,\nabla u\right)\nabla v.
\end{eqnarray*}
As long as $\nabla v$ and $\nabla w$ are linearly independent this shows
\[
g\left(\nabla w,\nabla u\right)=\frac{\left|\nabla u\right|^{2}}{u}w.
\]
As $v,w\in W$ are arbitrary we have shown that this holds for all $w\in W$.

Next we claim that $\nabla u_{p}$ stays tangent to $B_{p}$. Let $w\in W_{p}$ then $w$ vanishes on $B_{p}$. So for $X\in TB_{p}$ we have
\begin{eqnarray*}
D_{X}g\left(\nabla u_{p},\nabla w\right) & = & \mathrm{Hess}u_{p}\left(X,\nabla w\right)+\mathrm{Hess}w\left(X,\nabla u_{p}\right)\\
& = & u_{p}q\left(X,\nabla w\right)+wq\left(\nabla u_{p},X\right)\\
& = & 0.
\end{eqnarray*}
As $g\left(\nabla u_{p},\nabla w\right)=0$ at $p$, this shows that $g\left(\nabla u_{p},\nabla w\right)=0$ on all of $B_{p}$. Next recall from Proposition \ref{prop:Fintegrable} that
\[
\nabla_{V}V\in\widehat{\mathcal{F}}.
\]
In particular, it follows that $\nabla u\in\widehat{\mathcal{F}}\cap\mathcal{B}$. We clearly also have $\nabla u_{p}\in\widehat{\mathcal{F}}\cap\mathcal{B}$ so it follows that
\begin{eqnarray*}
\nabla u_{p} & = & g\left(\nabla u,\nabla u_{p}\right)\frac{\nabla u}{\left|\nabla u\right|^{2}}\\
& = & \frac{1}{u}u_{p}\nabla u.
\end{eqnarray*}
From which we get the last claim.
\end{proof}

This lemma allows us to completely determine the horizontal structure of $q$.
\begin{thm}\label{thm:horizontalP}
On $\mathcal{B}$ we have
\[
q|{}_{\mathcal{B}}=\frac{1}{k}\left(\mathrm{Ric}_{B}-\mathrm{Ric}_{M}\right)=\frac{1}{u}\mathrm{Hess}_{B}u.
\]
On the base space $B$, the quadratic form is given by
\[
q_{B}=\frac{1}{u}\mathrm{Hess}_{B}u.
\]
\end{thm}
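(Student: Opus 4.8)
The first equality is essentially a restatement of geometry we already have in hand. The horizontal Ricci formula quoted at the start of this section,
\[
\mathrm{Ric}_{M}(X,Y)=\mathrm{Ric}_{B}(X,Y)-\frac{k}{u}(\mathrm{Hess}_{B}u)(X,Y),
\]
rearranges immediately to $\mathrm{Ric}_{B}-\mathrm{Ric}_{M}=\frac{k}{u}\mathrm{Hess}_{B}u$ on horizontal vectors, which is the equality $\frac{1}{k}(\mathrm{Ric}_{B}-\mathrm{Ric}_{M})=\frac{1}{u}\mathrm{Hess}_{B}u$. So the genuine content is the identity $q|_{\mathcal{B}}=\frac{1}{u}\mathrm{Hess}_{B}u$, and I would prove this pointwise on the regular set $M-S$ using that $u_{p}\in W$.

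The plan is as follows. Fix $p\in M-S$ and horizontal vectors $X,Y\in\mathcal{B}_{p}=T_{p}B_{p}$. Since $u_{p}\in W(M;q)$ we have $(\mathrm{Hess}_{M}u_{p})(X,Y)=u_{p}\,q(X,Y)$, and evaluating at $p$, where $u_{p}(p)=1$, gives $(\mathrm{Hess}_{M}u_{p})(X,Y)|_{p}=q(X,Y)|_{p}$. The key step is to replace the ambient Hessian by an intrinsic one: because the leaf $B_{p}$ is totally geodesic by Proposition \ref{prop:Btotallygeod}, the second-fundamental-form correction vanishes and the ambient Hessian of any smooth function restricted to directions tangent to $B_{p}$ agrees with the intrinsic Hessian of its restriction, so that $(\mathrm{Hess}_{M}u_{p})(X,Y)=(\mathrm{Hess}_{B_{p}}(u_{p}|_{B_{p}}))(X,Y)$. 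By Lemma \ref{lem:u-structure} we have $u_{p}|_{B_{p}}=u/u(p)$, and under the isometric identification $B_{p}\cong B$ supplied by Theorem \ref{thm:WPHWP} this yields $\mathrm{Hess}_{B_{p}}(u_{p}|_{B_{p}})=\frac{1}{u(p)}\mathrm{Hess}_{B}u$.

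Combining these,
\[
q(X,Y)|_{p}=\frac{1}{u(p)}(\mathrm{Hess}_{B}u)(X,Y)|_{p}.
\]
Since $p\in M-S$ is arbitrary, this establishes $q|_{\mathcal{B}}=\frac{1}{u}\mathrm{Hess}_{B}u$ on $M-S$, and hence on all of $M$ by continuity. Finally, the right-hand side is manifestly the pullback of a tensor built from $u$ on the base, so $q|_{\mathcal{B}}$ is basic and descends to the stated tensor $q_{B}=\frac{1}{u}\mathrm{Hess}_{B}u$ on $B$.

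I expect the main obstacle to be the careful bookkeeping in the middle step: correctly invoking the totally-geodesic Hessian identity, then tracking the isometric identification of the leaf $B_{p}$ with $B$ together with the normalization $u_{p}(p)=1$ and the relation $u_{p}|_{B_{p}}=u/u(p)$. Once the pointwise identity is in place, the descent to $B$ is automatic precisely because the expression $\frac{1}{u}\mathrm{Hess}_{B}u$ is already defined on the base.
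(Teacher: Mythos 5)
Your proposal is correct and follows essentially the same route as the paper: both use $u_{p}\in W$ together with Lemma \ref{lem:u-structure} (so that $u_{p}=u/u(p)$ along the totally geodesic leaf $B_{p}$) to compute $q|_{\mathcal{B}}=\frac{1}{u}\mathrm{Hess}_{B}u$, and both obtain the Ricci identity from the standard warped product formula $\mathrm{Ric}_{M}=\mathrm{Ric}_{B}-\frac{k}{u}\mathrm{Hess}_{B}u$ on horizontal vectors. The only differences are presentational: you evaluate pointwise at $p$ using $u_{p}(p)=1$ where the paper computes $Q(X)=\frac{1}{u_{p}}\nabla_{X}\nabla u_{p}=\frac{1}{u}\nabla_{X}\nabla u$ along all of $B_{p}$, and your bookkeeping of the totally geodesic Hessian identity and the identification $B_{p}\cong B$ is sound.
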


\begin{proof}
We calculate on $B_{p}$ and use the linear operator $Q$ corresponding to $q$
\begin{eqnarray*}
Q\left(X\right) & = & \frac{1}{u_{p}}\nabla_{X}\nabla u_{p}\\
& = & \frac{1}{u}\nabla_{X}\nabla u\\
& = & \frac{1}{k}\left(\mathrm{Ric}_{B}-\mathrm{Ric}_{M}\right)\left(X\right).
\end{eqnarray*}
The second statement follows as $(B, g_B)$ is totally geodesic in $M$.
\end{proof}

Next we turn our attention to the vertical structure of $q$.
\begin{thm}\label{thm:verticalP}
Restricting $q$ to the vertical fibers we have
\[
q|_{\mathcal{F}}=\left(\rho+\mathrm{tr}Q\right)g|_{\mathcal{F}}=\left(\rho+\mathrm{tr}Q\right)u^{2}g{}_{F}.
\]
\end{thm}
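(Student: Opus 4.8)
The plan is to read off $q|_{\mathcal F}$ from the geometry by feeding the defining equation $\mathrm{Hess}\,w=wq$ into the standard divergence identity for Hessians, and then evaluating the resulting identity at a regular point $p$ against a solution that \emph{vanishes} at $p$ — a choice engineered to annihilate exactly the terms I cannot otherwise control.

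First I would record the two pointwise consequences of $w\in W$: the equation $\mathrm{Hess}\,w=wq$ itself and, after taking a trace, $\Delta w=w\,\mathrm{tr}Q$. Substituting these into the commutation identity $\mathrm{div}(\mathrm{Hess}\,w)(Y)=d(\Delta w)(Y)+\mathrm{Ric}_{M}(\nabla w,Y)$ and expanding both divergences by the Leibniz rule yields
\[
\mathrm{Ric}_{M}(\nabla w,Y)=q(\nabla w,Y)-(Yw)\,\mathrm{tr}Q+w\big[(\mathrm{div}\,q)(Y)-Y(\mathrm{tr}Q)\big].
\]
Next I fix a regular point $p$ and an arbitrary vertical vector $Z\in\mathcal F_{p}$. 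By the definition of $\mathcal F_{p}$ there is $w\in W_{p}$ with $\nabla w|_{p}=Z$; in particular $w(p)=0$, so the entire bracketed term drops out when the identity is evaluated at $p$. Taking $Y=V$ an arbitrary vertical vector and using that $\mathrm{Ric}_{M}(V)=\rho V$ exhibits $\mathcal F$ as an eigenspace of the symmetric operator $\mathrm{Ric}_{M}$ — so that $\mathcal B$ is also invariant and $\mathrm{Ric}_{M}(\nabla w,V)=\rho\,g(\nabla w,V)=\rho\,g(Z,V)$ — together with the fact from Proposition \ref{prop:Btotallygeod} that $q$ preserves the two distributions (so $Q(Z)$ is again vertical and $q(\nabla w,V)=g(QZ,V)$), the identity collapses at $p$ to $g(QZ,V)=(\rho+\mathrm{tr}Q)\,g(Z,V)$. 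Since $QZ$ is vertical and $V$ ranges over all vertical vectors, this forces $QZ=(\rho+\mathrm{tr}Q)Z$; and since $Z\in\mathcal F_{p}$ was arbitrary, this says precisely that $q|_{\mathcal F}=(\rho+\mathrm{tr}Q)\,g|_{\mathcal F}=(\rho+\mathrm{tr}Q)u^{2}g_{F}$ at $p$. As the regular set $M-S$ is open and dense, the identity then propagates to all of $M$ by continuity.

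The only genuinely delicate quantities in the expanded identity are $\mathrm{div}\,q$ and $Y(\mathrm{tr}Q)$, over which I have no independent control; the crux of the argument is therefore the observation that choosing $w$ to vanish at $p$ multiplies both of them by $w(p)=0$, so neither ever has to be computed. Everything else is bookkeeping with facts already in hand: the warped-product structure of Theorem \ref{thm:WPHWP} and the vertical Ricci identity $\mathrm{Ric}_{M}(V)=\rho V$ recorded in this section, the distribution-preserving property of $q$ from Proposition \ref{prop:Btotallygeod}, and the surjectivity of the map $w\mapsto\nabla w|_{p}$ from $W_{p}$ onto $\mathcal F_{p}$. The one point I would verify carefully is the sign convention in the divergence identity, but a sign error there would only relocate $\mathrm{Ric}_{M}$ within the equation and would not disturb the key cancellation, so it cannot affect the final formula.
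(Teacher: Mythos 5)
Your proposal is correct and follows essentially the same route as the paper: the paper's proof likewise applies the Weitzenb\"ock identity $\mathrm{div}(\mathrm{Hess}\,w)=\nabla\Delta w+\mathrm{Ric}(\nabla w)$ to $\mathrm{Hess}\,w=wq$, and evaluates at $p$ on solutions $w\in W_{p}$ so that $w(p)=0$ kills the $\mathrm{div}\,Q$ and $\nabla(\mathrm{tr}Q)$ terms, leaving $Q(\nabla w|_{p})=(\rho+\mathrm{tr}Q)\nabla w|_{p}$ on $\mathcal{F}_{p}$. Your only deviations are cosmetic — you phrase the conclusion as a pairing $g(QZ,V)$ rather than a vector equation, and your closing remark that a sign error in the Weitzenb\"ock identity ``cannot affect the final formula'' is not right (it would flip $\rho$ to $-\rho$), but the identity as you used it is the correct one, so the proof stands.
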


\begin{rem}
Note that we cannot expect any more information given what happens when $\dim M=1$ as $F=M$ in that case.
\end{rem}

\begin{proof}
Start with $w\in W$, i.e.,
\[
\nabla\nabla w=wQ.
\]
The Weitzenb\"{o}ck formula for a gradient field $\nabla w$ states
\[
\mathrm{div}\nabla\nabla w=\nabla\Delta w+\mathrm{Ric}\left(\nabla w\right)
\]
which for our specific field reduces to
\[
\mathrm{div}\left(wQ\right)=\nabla\left(w\mathrm{tr}Q\right)+\mathrm{Ric}\left(\nabla w\right).
\]
This implies
\[
Q\left(\nabla w\right)+w\mathrm{div}\left(Q\right)=\mathrm{tr}Q\nabla w+w\nabla\left(\mathrm{tr}Q\right)+\mathrm{Ric}\left(\nabla w\right).
\]
So
\[
Q\left(\nabla w\right)=\mathrm{Ric}\left(\nabla w\right)+\mathrm{tr}Q\nabla w+w\left(-\mathrm{div}\left(Q\right)+\nabla\left(\mathrm{tr}Q\right)\right).
\]
This tells us that $Q$ is essentially determined by $\mathrm{Ric}$, $\mathrm{tr}Q$ and $\mathrm{div}\left(Q\right)$ on $\widehat{\mathcal{F}}$. On $\mathcal{F}$ we can be more specific. Let $p\in M$ and $w\in W_{p}$ then
\[
Q\left(\left(\nabla w\right)|_{p}\right)=\mathrm{Ric}\left(\left(\nabla w\right)|_{p}\right)+\left(\mathrm{tr}Q\right)\left(\nabla w\right)|_{p}
\]
showing that
\[
Q|_{\mathcal{F}}=\left(\rho+\mathrm{tr}Q\right)I|_{\mathcal{F}}.
\]
\end{proof}

Finally we note that when $k>1$ then $q$ is completely determined by the vertical Ricci curvatures and the warping function.
\begin{cor}\label{cor:trP}
When $k=1$ we have
\[
\mathrm{tr}\left(Q{}_{B}\right)=\frac{\Delta_{B}u}{u}=-\rho,
\]
while if $k>1$
\[
\mathrm{tr}Q=-\frac{1}{k-1}\left(k\rho+\mathrm{tr}\left(Q{}_{B}\right)\right).
\]
In particular, $q$ is invariant under the action of $G$ if $k>1$.
\end{cor}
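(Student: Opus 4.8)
The plan is to compute $\mathrm{tr}Q$ by splitting it along the orthogonal decomposition $TM = \mathcal{B}\oplus\mathcal{F}$, which is legitimate precisely because Proposition \ref{prop:Btotallygeod} guarantees that $q$, and hence $Q$, preserves both distributions. Thus
\[
\mathrm{tr}Q = \mathrm{tr}\left(Q|_{\mathcal{B}}\right) + \mathrm{tr}\left(Q|_{\mathcal{F}}\right),
\]
and I would evaluate each summand using the two preceding theorems. On the horizontal block, Theorem \ref{thm:horizontalP} gives $Q|_{\mathcal{B}} = Q_{B} = \frac{1}{u}\mathrm{Hess}_{B}u$, so $\mathrm{tr}\left(Q|_{\mathcal{B}}\right) = \mathrm{tr}(Q_{B}) = \frac{\Delta_{B}u}{u}$. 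On the vertical block, which is $k$-dimensional, Theorem \ref{thm:verticalP} gives $Q|_{\mathcal{F}} = (\rho + \mathrm{tr}Q)\,I|_{\mathcal{F}}$, so $\mathrm{tr}\left(Q|_{\mathcal{F}}\right) = k(\rho + \mathrm{tr}Q)$.

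Combining these yields the single self-referential equation
\[
\mathrm{tr}Q = \mathrm{tr}(Q_{B}) + k(\rho + \mathrm{tr}Q),
\]
and both cases of the corollary fall out of how this equation behaves as a function of $k$. Collecting the $\mathrm{tr}Q$ terms gives $(1-k)\mathrm{tr}Q = \mathrm{tr}(Q_{B}) + k\rho$. When $k > 1$ the coefficient $1-k$ is nonzero, so I can solve directly for $\mathrm{tr}Q$ to obtain the stated formula $\mathrm{tr}Q = -\frac{1}{k-1}\left(k\rho + \mathrm{tr}(Q_{B})\right)$. When $k = 1$ the $\mathrm{tr}Q$ terms cancel and the equation degenerates into the constraint $0 = \mathrm{tr}(Q_{B}) + \rho$, giving $\mathrm{tr}(Q_{B}) = \frac{\Delta_{B}u}{u} = -\rho$. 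This degeneration at $k=1$ is exactly the phenomenon behind the earlier remark that the characteristic function need only be constant once the fiber has dimension greater than one.

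For the final invariance claim when $k > 1$, I would argue that the explicit formula pins $\mathrm{tr}Q$ down as a function on the base: both $\rho$ (the vertical Ricci curvature, which by construction descends to $B$) and $\mathrm{tr}(Q_{B})$ are functions on $B$, hence so is $\mathrm{tr}Q$. Consequently each block of $q$ is $G$-invariant: the horizontal block $q|_{\mathcal{B}} = q_{B}$ is pulled back from $B$, while the vertical block $q|_{\mathcal{F}} = (\rho + \mathrm{tr}Q)u^{2}g_{F}$ has all of its base dependence carried by the $G$-invariant functions $\rho$, $\mathrm{tr}Q$, and $u$, and its fiber part given by the round metric $g_{F}$, which $G$ preserves since $G$ acts by isometries on the fibers. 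As $q$ has no mixed component, it follows that $q$ is $G$-invariant.

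There is no genuine analytic obstacle here; the whole argument is a short linear computation once the two preceding theorems are in place. The only point requiring care is recognizing that the self-referential equation is solvable for $\mathrm{tr}Q$ precisely when $k \neq 1$, and that its failure to be solvable at $k = 1$ is not a defect but rather the correct structural reason that $\mathrm{tr}Q$, and hence $q$, can fail to be $G$-invariant in the one-dimensional fiber case.
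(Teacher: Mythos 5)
Your proposal is correct and takes essentially the same route as the paper: both extract the trace identity $\mathrm{tr}Q = k\left(\rho+\mathrm{tr}Q\right)+\mathrm{tr}\left(Q_{B}\right)$ from Theorems \ref{thm:horizontalP} and \ref{thm:verticalP} together with $\mathrm{tr}\left(Q_{B}\right)=\frac{\Delta_{B}u}{u}$, and then read off the two cases according to whether the coefficient $1-k$ vanishes. Your block-by-block verification of the $G$-invariance of $q$ simply spells out in detail the paper's one-line observation that $\rho$ and $\frac{\Delta_{B}u}{u}$ are invariant under $G$.
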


\begin{proof}
Our formulas for $q|_{\mathcal{B}}$ and $q|_{\mathcal{F}}$ imply that
\[
\mathrm{tr}Q=k\left(\rho+\mathrm{tr}Q\right)+\mathrm{tr}\left(Q{}_{B}\right)
\]
and by definition
\[
\mathrm{tr}\left(Q_{B}\right)=\frac{\Delta_{B}u}{u}.
\]
Both statements follow immediately from this.

For the last statement note that both $\rho$ and $\dfrac{\Delta_{B}u}{u}$ are invariant under $G$.
\end{proof}

\medskip
\section{The Structure of $W$}

For a given warped product structure coming from a specific subspace
$W\subset W\left(M;q\right)$ define
\[
\kappa=-\rho-\mathrm{tr}Q
\]
and
\begin{eqnarray}
\bar{\mu}\left(u\right) & = & \kappa u^{2}+\left|\nabla u\right|^{2}, \label{eq:barmuu}\\
\bar{\mu}\left(u,z\right) & = & \kappa uz+g\left(\nabla u,\nabla z\right). \nonumber
\end{eqnarray}

In this section we prove Theorem B for the subspace $W$. The argument is split into two cases.   In Theorem \ref{thm:W-S-notempty}  we prove the result when the singular set $S$ is nonempty and in Theorem \ref{thm:W} we address the case were $S = \emptyset$. In both cases we will see that the characteristic function $\tau$ of the fiber space $F$ is equal to $\bmu(u)$.

We first simplify Proposition \ref{prop:WsplittingHWP} by using Theorem \ref{thm:verticalP}.

\begin{prop}\label{prop:WsplittingHWP-1}
Let $M = B \times_u F$ and assume that $uq_{B}=\mathrm{Hess}_{B}u$. Then $w\in W(M;q)$ if and only if there exist $z\in C^{\infty}(B)$ and $v\in C^{\infty}(F)$ such that
\begin{enumerate}
\item $w=\pi_{1}^{*}(z)+\pi_{1}^{*}(u)\cdot\pi_{2}^{*}(v)$,

\item $z\in W(B;q_{B})$, and

\item $\mathrm{Hess}_{F}v+v\bar{\mu}\left(u\right)g_{F}=-\bar{\mu}\left(u,z\right)g_{F}$.
\end{enumerate}
\end{prop}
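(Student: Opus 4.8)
The plan is to deduce this proposition directly from Proposition \ref{prop:WsplittingHWP} by substituting the vertical formula for $q$ obtained in Theorem \ref{thm:verticalP}. Conditions (1) and (2) are word-for-word the same in both statements, so the only work to be done is to rewrite condition (3) of Proposition \ref{prop:WsplittingHWP} into the cleaner form stated here. In other words, this is not a new theorem but a repackaging of an earlier one using the abbreviations $\kappa$ and $\bar{\mu}$ introduced at the start of this section.

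Concretely, I would first recall from Theorem \ref{thm:verticalP} that $q|_{\mathcal{F}}=\left(\rho+\mathrm{tr}Q\right)u^{2}g_{F}$, and then observe that, by the definition $\kappa=-\rho-\mathrm{tr}Q$, this is the same as $q|_{\mathcal{F}}=-\kappa u^{2}g_{F}$. Next I would insert this identity into condition (3) of Proposition \ref{prop:WsplittingHWP}, namely
\[
\mathrm{Hess}_{F}v+v\left(-q|_{\mathcal{F}}+\left|\nabla u\right|_{B}^{2}g_{F}\right)=-\left(-\tfrac{z}{u}q|_{\mathcal{F}}+g_{B}(\nabla u,\nabla z)g_{F}\right).
\]
On the left-hand side the substitution turns $-q|_{\mathcal{F}}$ into $\kappa u^{2}g_{F}$, so the bracket collapses to $\left(\kappa u^{2}+\left|\nabla u\right|^{2}\right)g_{F}=\bar{\mu}(u)g_{F}$ by the definition \eqref{eq:barmuu}. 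On the right-hand side the substitution turns $-\tfrac{z}{u}q|_{\mathcal{F}}$ into $\kappa uz\,g_{F}$, so the bracket becomes $\left(\kappa uz+g(\nabla u,\nabla z)\right)g_{F}=\bar{\mu}(u,z)g_{F}$. The equation therefore reads $\mathrm{Hess}_{F}v+v\bar{\mu}(u)g_{F}=-\bar{\mu}(u,z)g_{F}$, which is exactly condition (3) of the present proposition, and the converse direction is the same computation read backwards.

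Since this is a purely formal rewriting, there is no genuine obstacle; the only care required is bookkeeping of signs. I would emphasize that the sign convention $\kappa=-\rho-\mathrm{tr}Q$ is precisely what converts the $-q|_{\mathcal{F}}$ term into the positive $\kappa u^{2}$ that appears inside $\bar{\mu}(u)$, so the choice of sign for $\kappa$ is not incidental but is made exactly so that the two bracketed coefficients assemble into the single bilinear form $\bar{\mu}$. I would also remark that $g(\nabla u,\nabla z)$ and $g_{B}(\nabla u,\nabla z)$ coincide here, since $u$ and $z$ are functions on $B$ and their gradients are horizontal, so no ambiguity arises from writing $g$ versus $g_{B}$. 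This collapse into $\bar{\mu}$ is the whole reason for introducing $\kappa$ and $\bar{\mu}$ before stating the result.
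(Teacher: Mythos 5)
Your proposal is correct and is exactly the paper's argument: the paper states this proposition as a direct simplification of Proposition \ref{prop:WsplittingHWP} via Theorem \ref{thm:verticalP}, i.e.\ substituting $q|_{\mathcal{F}}=-\kappa u^{2}g_{F}$ into condition (3) so that the coefficients assemble into $\bar{\mu}(u)$ and $\bar{\mu}(u,z)$, with no further proof given. Your sign bookkeeping and the remark that $g(\nabla u,\nabla z)=g_{B}(\nabla u,\nabla z)$ for horizontal gradients are both accurate.
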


We note that, if $W(B; q_B)$ is spanned by $u$, then from Proposition \ref{prop:WsplittingHWP-1} above the conclusion to Theorem B holds.    However,  this is not always the case  as the following example shows.

\begin{example}\label{Exa:B=R}
There are examples such that $\dim W\left(M;q\right)=\dim M=k+1$, $M = B \times_u F^k$ and $\dim W(B; q_B) = 2$.  Let $B = (\Real, dx^2)$ be the real line. Select $u:\mathbb{R}\rightarrow\left(0,\infty\right)$ and define $q_{B}=\frac{u^{\prime\prime}}{u}dx^{2}$ where we use $^{\prime}$ for derivatives on the base. In this case we have $\dim W\left(B;\frac{u^{\prime\prime}}{u}dx^{2}\right)=2$. Next choose a simply connected $k$-dimensional fiber space $\left(F,g_{F},-\tau g_{F}\right)$, where $\tau$ is a constant when $k>1$ or a merely a function on $F=\mathbb{R}$. The warped product
\[
\left(M^{k+1},g,q\right)=\left(\mathbb{R}\times F,dx^{2}+u^{2}g_{F},\frac{u^{\prime\prime}}{u}dx^{2}+\left(\left(u^{\prime}\right)^{2}-\tau\right)g_{F}\right)
\]
has the property that
\[
W\left(M;q\right)\supset\left\{ \pi_{1}^{*}(u)\cdot\pi_{2}^{*}(v):v\in W\left(F;-\tau g_{F}\right)\right\}
\]
and therefore has dimension $k+1$ or $k+2$. In the latter case the metric $dx^{2}+u^{2}g_{F}$ is forced to have constant curvature by Theorem A. In this case the precise condition for $\dim W\left(M;q\right)=k+2$ is
\[
\frac{\tau-\left(u^{\prime}\right)^{2}}{u^{2}}=\kappa=\kappa_{B}=-\frac{u^{\prime\prime}}{u}
\]
or
\[
\frac{\tau}{u^{2}}=-\left(\frac{u^{\prime}}{u}\right)^{\prime}.
\]
Note, in particular, when $F = \Real$ we can choose a non-constant $\tau$ on $F$ such that the above identity can never happen. So, as long as $u$, $\left(F,g_{F}\right)$, and $\tau$ are selected in such a way that the total space $M$ doesn't have constant curvature we obtain examples satisfying the desired conditions. 
\end{example}

Now we prove Theorem B in  the case where there is a singular set.

\begin{thm}\label{thm:W-S-notempty}
Let $\left(M,g\right)$ be complete and simply connected and $W\subset W\left(M;q\right)$ a subspace of dimension $k+1\geq2$. When $S\neq\emptyset$, then we have
\[
W=\left\{ \pi_{1}^{*}(u)\cdot\pi_{2}^{*}(v):v\in W\left(F;-\bar{\mu}\left(u\right)g_{F}\right)\right\}
\]
and $\bar{\mu}\left(u\right)$ is constant on $M$.
\end{thm}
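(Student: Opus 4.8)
The plan is to combine the warped product decomposition of an arbitrary $w\in W$ from Proposition \ref{prop:WsplittingHWP-1} with the collapsing behaviour of the fiber along the singular set $S$. First I would record two structural consequences of $S\neq\emptyset$. By the construction in Theorem \ref{thm:WPHWP} the fibers near a singular point are unit normal spheres, so $F=\mathbb{S}^{k}$ is the round unit sphere and $\bar B$ is a manifold with totally geodesic boundary $\partial B$ on which $u$ vanishes to first order with $|\nabla u|=1$. Second, by the very definition of the singular set, every $w\in W$ satisfies $w|_{S}=0$: a point $p\in S$ is precisely one where $W_{p}=W$, i.e. all elements of $W$ vanish at $p$.

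Next I would extract the base component. Writing $w=\pi_{1}^{*}(z)+\pi_{1}^{*}(u)\pi_{2}^{*}(v)$ as in Proposition \ref{prop:WsplittingHWP-1}, the identity $w|_{S}=0$ together with $u|_{\partial B}=0$ forces $z|_{\partial B}=0$, so $z\in W(B;q_{B})_{D}$. Since $u$ itself lies in $W(B;q_{B})_{D}$ (as $\mathrm{Hess}_{B}u=uq_{B}$ and $u|_{\partial B}=0$) and is nonzero, the Dirichlet bound $\dim W(B;q_{B})_{D}\leq 1$, applied to $\bar B$, gives $W(B;q_{B})_{D}=\mathbb{R}\,u$, hence $z=cu$ for a constant $c$. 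Absorbing $cu$ into the fiber factor yields $w=\pi_{1}^{*}(u)\pi_{2}^{*}(\tilde v)$ with $\tilde v=v+c$, and by the uniqueness clause of Lemma \ref{lem:wsplitting} I may take the decomposition with vanishing base part. Condition (3) of Proposition \ref{prop:WsplittingHWP-1} with $z=0$ then reads $\mathrm{Hess}_{F}\tilde v=-\bar{\mu}(u)\,\tilde v\,g_{F}$.

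The crux is then to pin down $\tilde v$ and $\bar{\mu}(u)$ using smoothness at $S$. Near a point of $S$ the totally geodesic $S$ together with the collapsing normal spheres makes $M$ locally a Euclidean bundle with fiber $\mathbb{R}^{k+1}$ over $S$, with $u$ playing the role of the radial distance. A smooth function of the form $\pi_{1}^{*}(u)\pi_{2}^{*}(\tilde v)$, i.e. radius times an angular profile, can only be smooth at the collapsed sphere if $\tilde v$ is the restriction of a linear function, i.e. a first spherical harmonic; in particular $\mathrm{Hess}_{\mathbb{S}^{k}}\tilde v=-\tilde v\,g_{F}$. Comparing with $\mathrm{Hess}_{F}\tilde v=-\bar{\mu}(u)\tilde v\,g_{F}$ forces $\bar{\mu}(u)=1$ at every point where $\tilde v\neq 0$. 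Since $\dim W=k+1\geq 2$ and $w\mapsto\tilde v$ is injective (Lemma \ref{lem:wsplitting}), the resulting $\tilde v$'s fill the whole $(k+1)$-dimensional space of first harmonics, which have no common zero; hence $\bar{\mu}(u)\equiv 1$ on $M$. For $k\geq 2$ one could instead invoke Lemma \ref{lem:GenObata}, but the $k=1$ case genuinely needs this smoothness input, since a one-dimensional fiber would otherwise permit a non-constant characteristic function $\tau$.

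Finally I would close the argument by a dimension count. The above shows $W\subseteq\{\pi_{1}^{*}(u)\pi_{2}^{*}(v):v\in W(F;-\bar{\mu}(u)g_{F})\}$, and with $\bar{\mu}(u)=1$ the target is $\{\pi_{1}^{*}(u)\pi_{2}^{*}(v):v\in W(\mathbb{S}^{k};-g_{F})\}$, which has dimension $\dim W(\mathbb{S}^{k};-g_{F})=k+1$ by Obata (the defining property of a fiber space), the map $v\mapsto\pi_{1}^{*}(u)\pi_{2}^{*}(v)$ being injective by Proposition \ref{prop:injectionHWP}. As $\dim W=k+1$ as well, the inclusion is forced to be an equality, giving the stated decomposition together with the constancy of $\bar{\mu}(u)$. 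I expect the \emph{main obstacle} to be the local smoothness analysis at $S$ identifying $\tilde v$ as a first harmonic and thereby fixing $\bar{\mu}(u)=1$; everything else is bookkeeping on top of the already-established propositions.
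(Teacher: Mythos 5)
Your proposal is correct, and its first half coincides with the paper's argument: $w|_{S}=0$ forces $z|_{\partial B}=0$, and then the one-dimensionality of the Dirichlet space (the paper invokes Proposition \ref{prop:injectionHWP} directly, which is the same mechanism) gives $z=Cu$, so $w=\pi_{1}^{*}(u)\cdot\pi_{2}^{*}(\tilde v)$; your closing dimension count ($W\subseteq\{\pi_1^*(u)\pi_2^*(v)\}$ with both sides of dimension $k+1$) is also the same bookkeeping the paper leaves largely implicit. Where you genuinely diverge is the constancy of $\bar{\mu}(u)$. The paper's mechanism is much softer: after the Corollary \ref{cor:Psplitting}-style argument shows $\bar{\mu}(u)$ descends to a function on $F$, it writes $p=(x,y)$, lets $x\to x_{0}\in\partial B=S$, and uses only the continuity of $\kappa=-\rho-\mathrm{tr}Q$ on $M$ together with $u(x_{0})=0$ to get $\bar{\mu}(u)(\cdot,y)=\left|\nabla u\right|^{2}|_{x_{0}}$, which is visibly independent of $y$. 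You instead analyze smoothness of $w=u\tilde v$ at the collapsed sphere and conclude that $\tilde v$ is a first spherical harmonic, whence $\bar{\mu}(u)\equiv 1$. This buys a slightly stronger conclusion (the explicit value $1$, which in the paper only emerges implicitly from $\left|\nabla u\right|=1$ on $\partial B$), and it uniformly handles $k=1$, where Lemma \ref{lem:GenObata} is unavailable and coexisting periodic solutions of Hill's equation with nonconstant $\tau$ must be excluded; but it costs you a smoothness lemma that you assert rather than prove, and it leans on the normalizations $F=\mathbb{S}^{k}(1)$ and $\left|\nabla u\right|=1$ at $\partial B$ from Theorem \ref{thm:WPHWP}, which the paper's limit argument never needs. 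The asserted lemma is true and quickly repaired: along the normal geodesic $\gamma_{\theta}$ at a singular point one has $w(\gamma_{\theta}(r))=\left(r+O(r^{2})\right)\tilde v(\theta)$, so the directional derivative of $w$ at the singular point in direction $\theta$ equals $\tilde v(\theta)$, and mere differentiability of $w$ there forces $\theta\mapsto\tilde v(\theta)$ to be the restriction of a linear functional to the unit sphere. With that line added, your argument is a complete and legitimately different proof, though somewhat more delicate than the paper's two-line continuity argument.
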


\begin{proof}
We start by showing that any $w\in W$ is of the form $w=\pi_{1}^{*}(u)\cdot\pi_{2}^{*}(v)$. We know from Lemma \ref{lem:wsplitting} that
\[
w=\pi_{1}^{*}(z)+\pi_{1}^{*}(u)\cdot\pi_{2}^{*}(v).
\]
So the goal is simply to show that $z$ must be a multiple of $u$. Recall from Proposition \ref{prop:WsplittingHWP} that $z\in W\left(B;q_{B}\right)$.

When $S\neq\emptyset$ we know that $w|_{S}=0$ so it immediately follows that $z|_{\partial B}=0$. Then Proposition \ref{prop:injectionHWP} shows that $z=Cu$ for some constant $C$. We can now use Theorem \ref{thm:verticalP} and argue as in Corollary \ref{cor:Psplitting} that
\begin{eqnarray*}
q_{F} & = & q|_{\mathcal{F}}-\left|\nabla u\right|_{B}^{2}g_{F}\\
& = & -\left(\kappa u^{2}+\left|\nabla u\right|^{2}\right)g_{F}\\
& = & -\bar{\mu}\left(u\right)g_{F}
\end{eqnarray*}
defines a quadratic form on $F$. In particular $\bar{\mu}\left(u\right)$ is a function on $F$. This gives us the desired structure
\[
W=\left\{ \pi_{1}^{*}(u)\cdot\pi_{2}^{*}(v):v\in W\left(F;-\bar{\mu}\left(u\right)g_{F}\right)\right\} .
\]
Finally we show that $\bmu(u)$ is constant on $M$. We can think of $p\in M-S$ as a pair of points $p=\left(x,y\right)\in\mathrm{int}B\times F$. Thus $\bar{\mu}\left(u\right)\left(x,y\right)$ is constant in $x$ for a fixed $y$. Letting $x\rightarrow x_{0}\in\partial B=S$ and using that $\kappa=-\rho-\mathrm{tr}Q$ is continuous on $M$ then shows that
\[
\bar{\mu}\left(u\right)\left(x,y\right)=\kappa\left(x_{0}\right)u^{2}\left(x_{0}\right)+\left|\nabla u\right|^{2}|_{x_{0}}=\left|\nabla u\right|^{2}|_{x_{0}}.
\]
Here the right hand side is clearly independent of $y$ and so the left hand side must be as well. This shows that $\bar{\mu}\left(u\right)$ is constant on $M$.
\end{proof}

When there is no singular set we have to work a little harder to prove the same result.

\begin{thm}\label{thm:W}
Let $\left(M,g\right)$ be complete and simply connected. If $W\subset W\left(M;q\right)$ has dimension $k+1\geq2$ and $S=\emptyset$, then $\bar{\mu}\left(u\right)$ is a function on $F$ and a constant when $k>1$. Moreover,
\[
W=\left\{ \pi_{1}^{*}(u)\cdot\pi_{2}^{*}(v):v\in W\left(F;-\bar{\mu}\left(u\right)g_{F}\right)\right\} .
\]
\end{thm}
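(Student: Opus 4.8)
The plan is to run the same reduction as in Theorem \ref{thm:W-S-notempty}. By Proposition \ref{prop:WsplittingHWP-1} every $w\in W$ has the form $w=\pi_1^*(z)+\pi_1^*(u)\cdot\pi_2^*(v)$ with $z\in W(B;q_B)$, so it suffices to prove that $z$ is always a constant multiple of $u$: this makes every $w$ of product form $\pi_1^*(u)\cdot\pi_2^*(v)$, hence $W$ is contained in the space of product solutions, and since the map $v\mapsto\pi_1^*(u)\cdot\pi_2^*(v)$ is injective (Lemma \ref{lem:wsplitting}) a dimension count with $\dim W=k+1$ both forces equality and forces $F$ to be a fiber space with $\dim W(F;-\bmu(u)g_F)=k+1$. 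The essential new difficulty, compared with the $S\neq\emptyset$ case, is that $B$ now has no boundary, so there is no Dirichlet condition $z|_{\partial B}=0$ to insert into Proposition \ref{prop:injectionHWP}; the multiple-of-$u$ conclusion must instead be wrung out of the interior structure.

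First I would extract the decoupling hidden in condition (3), namely
\[
\Hess_F v=-\bigl(v\,\bmu(u)+\bmu(u,z)\bigr)g_F .
\]
The left-hand side is a tensor on $F$, so $v\,\bmu(u)+\bmu(u,z)$ is constant along every horizontal leaf $B_p$. When $k>1$, Corollary \ref{cor:trP} makes $q$, and hence $\kappa=-\rho-\mathrm{tr}Q$ and $\bmu(u)=\kappa u^2+|\nabla u|^2$, invariant under $G$, so that $\bmu(u)$ and $\bmu(u,z)$ are functions on $B$ alone. Choosing a $w\in W_p$ whose $v$-component is non-constant — such a $w$ exists because $W_p$ is $k$-dimensional and contains no pullback from $B$ (a basic function has horizontal gradient, which by Proposition \ref{prop:injectionHWP} would have to vanish at $p$) — and evaluating the displayed relation at two fiber points where $v$ differs, the $\bmu(u,z)$ terms cancel and $\bmu(u)$ is revealed to be a genuine constant $\tau$. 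The case $k=1$ must be handled separately, since there $\kappa$ may vary over the one-dimensional fiber and $\tau=\bmu(u)$ is allowed to remain a non-constant function on $F$; there I would produce one product solution and read off from Corollary \ref{cor:Psplitting} that $q_F=-\bmu(u)g_F$ is a tensor on $F$.

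With $\tau=\bmu(u)$ constant (for $k>1$), the relation also forces $\bmu(u,z)$ to be a constant $b$, and then $v$ solves $\Hess_F v+(\tau v+b)g_F=0$, so after a constant shift $v$ lies in $W(F;-\tau g_F)$ while $z$ is shifted to satisfy $\bmu(u,z)=0$. At this point each $w\in W$ has been split, within $W(M;q)$, as a base piece $\pi_1^*(z)$ with $z\in W(B;q_B)$ and $\bmu(u,z)=0$, plus a product piece $\pi_1^*(u)\cdot\pi_2^*(v)$. Both pieces individually solve $\Hess\,\cdot=\cdot\,q$, and the remaining goal is to show the base piece is always a multiple of $u$.

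This last exclusion is the step I expect to be the real obstacle. Nothing established so far prevents a nontrivial base-type solution $\pi_1^*(z)$ with $z\notin\mathbb{R}u$ from sitting inside the chosen subspace $W$; in the $S\neq\emptyset$ case the boundary annihilated exactly such pieces, and here a global argument is required. The natural route is to exploit that $W$ generates the full isometry algebra of the space-form fiber (Proposition \ref{prop:Fintegrable}) and that $G$ acts transitively on each fiber, so that each translate $g^{*}w-w$ is of product form; one then expects the constancy properties of $\bmu$ to force the base component of $w$ to be $G$-invariant, hence basic, after which a leaf-wise application of Proposition \ref{prop:injectionHWP} — using $\bmu(u,z)=0$ and the relation $u_p=u/u(p)$ from Lemma \ref{lem:u-structure} — pins $z$ down to a multiple of $u$. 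Verifying that $W$ (and not merely $W(M;q)$) is stable under this symmetry, together with the degenerate subcases $\tau=0$ and $k=1$, is where the argument is most delicate.
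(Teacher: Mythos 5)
There is a genuine gap, and you have correctly located it yourself: the exclusion of a base component $z\notin\mathbb{R}u$ is the entire content of the theorem when $S=\emptyset$, and your proposal offers only a speculative sketch for it. The missing idea in the paper is a pointwise gradient constraint that you never derive: since $\nabla w|_p\in\widehat{\mathcal{F}}_p$ for every $w\in W$ by definition of $\widehat{\mathcal{F}}$, and since $\mathcal{B}\cap\widehat{\mathcal{F}}$ is at most one-dimensional and spanned by $\nabla u$ (from the proof of Proposition \ref{prop:Fintegrable} and Lemma \ref{lem:u-structure}), decomposing $\nabla w=\nabla z+v\,\nabla u+u\nabla v$ forces $\nabla z$ to be proportional to $\nabla u$ at every point of $B$. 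Hence the span $W_B$ of $u$ and all the $z$'s satisfies $\dim W_B\le2$, and the only case needing further work is $\dim W_B=2$, where Corollary \ref{cor:ridigidity} splits $B=H\times\Real$ and the verticality of the Killing fields does the killing blow: for $w_1=z+uv_1$ and $w_2=uv_2$ one computes
\[
w_1\nabla w_2-w_2\nabla w_1=v_2\left(z\nabla u-u\nabla z\right)+u^2\left(v_1\nabla v_2-v_2\nabla v_1\right)+zu\nabla v_2\in TF,
\]
whose horizontal part $v_2(z\nabla u-u\nabla z)$ must vanish, forcing $z\in\mathbb{R}u$ — a contradiction. None of this machinery appears in your proposal, and your substitute route fails on exactly the point you flag: $W$ is an arbitrary subspace of $W(M;q)$, so even when $q$ is $G$-invariant ($k>1$), the translates $g^*w$ land in $W(M;q)$ but not in $W$, and for $k=1$ the invariance of $q$ under $G$ is not available at all (Corollary \ref{cor:trP} gives it only for $k>1$). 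The phrase ``one then expects the constancy properties of $\bmu$ to force the base component of $w$ to be $G$-invariant'' is not an argument — the base component $\pi_1^*(z)$ is already basic, and its $G$-invariance gives nothing about proportionality to $u$. Your $k=1$ plan also has no mechanism for producing even one product solution: in the paper this comes for free from the subspace $W^F=\ker(W\to W_B/\mathrm{span}\{u\})$ having dimension $\ge k$, which again rests on the reduction $\dim W_B\le2$ that you lack. Finally, your constant-shift trick $v\mapsto v+b/\tau$ breaks when $\tau=0$, which you acknowledge but do not repair; the paper's argument never needs this shift.

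On the positive side, two pieces of your proposal are sound and one is a genuine alternative: the dimension count identifying $W$ with the full product space once every $w$ is known to be of the form $\pi_1^*(u)\cdot\pi_2^*(v)$ matches the paper (via Lemma \ref{lem:ReverseObata} for $k>1$ and $F=\Real$ for $k=1$), and your derivation of the constancy of $\bmu(u)$ for $k>1$ — using the $G$-invariance of $\kappa$ from Corollary \ref{cor:trP}, so that $\bmu(u)$ and $\bmu(u,z)$ are functions on $B$, then evaluating $\Hess_F v=-\left(v\,\bmu(u)+\bmu(u,z)\right)g_F$ at two fiber points where $v$ differs (such a $w$ with non-constant $v$ exists, since a $w\in W_p$ with constant $v$ is basic, has horizontal gradient lying in the vertical $\mathcal{F}_p$, hence vanishes by Proposition \ref{prop:injectionHWP}) — is a legitimate alternative to the paper's computation, which instead reads off $\bmu(u)=\rho_F/(k-1)$ from the vertical Ricci equation of the warped product. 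But these correct fragments surround, rather than fill, the central hole.
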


\begin{proof}
We start by showing that $\bar{\mu}\left(u\right)$ is constant when $k>1$. The vertical part of the Ricci curvatures for a warped product implies in our case that
\[
u^{2}\rho+\left(u\Delta_{B}u+(k-1)|\nabla u|^{2}\right)=\rho_{F}
\]
which can be reduced to
\[
\left(\rho+\mathrm{tr}\left(Q{}_{B}\right)\right)u^{2}+\left(k-1\right)\left|\nabla u\right|^{2}=\rho_{F}.
\]
When $k>1$ the relationship developed in Corollary \ref{cor:trP} implies
\[
\kappa=-\rho-\mathrm{tr}Q=\frac{\rho+\mathrm{tr}\left(Q{}_{B}\right)}{k-1}.
\]
Thus
\[
\bar{\mu}\left(u\right)=\kappa u^{2}+\left|\nabla u\right|^{2}=\frac{\rho_{F}}{k-1}
\]
is constant.

We know from Lemma \ref{lem:wsplitting} that
\begin{equation}
w=\pi_{1}^{*}(z)+\pi_{1}^{*}(u)\cdot\pi_{2}^{*}(v)=z+u\cdot v\label{eq:wsplitting}
\end{equation}
If we take the gradient of this equation at some point $p=\left(x,y\right)\in B\times F$ then
\[
\nabla w=\left(\nabla z\right)|_{x}+v\left(y\right)\left(\nabla u\right)|_{x}+u\left(x\right)\left(\nabla v\right)|_{y}.
\]
In this decomposition
\[
\left(\nabla v\right)|_{y}\in\mathcal{F}
\]
and
\[
\left(\nabla z\right)|_{x}+v\left(y\right)\left(\nabla u\right)|_{x}\in\mathcal{B}\cap\widehat{\mathcal{F}}.
\]
Thus it follows that
\[
\left(\nabla z\right)|_{x}\in\mathcal{B}\cap\widehat{\mathcal{F}}
\]
which in turn implies that $\left(\nabla z\right)|_{x}$ and $\left(\nabla u\right)|_{x}$ are linearly dependent for all $b\in B$.

Let $W_{B}\subset W\left(B;q_{B}\right)$ be the subspace spanned by $u$ and all $z$ that appear in equation (\ref{eq:wsplitting}). As these functions all have proportional gradients the foliation $\widehat{\mathcal{F}}_{B}$ on $B$ defined by $W_{B}$ has dimension at most 1 and consequently $\dim W_{B}\leq2$.

If $\dim W_{B}=1$, then $W_{B}=\mathrm{span}\left\{ u\right\} $ and so we always have:
\[
w=\pi_{1}^{*}(u)\cdot\pi_{2}^{*}(v).
\]
We can then use Proposition \ref{prop:WsplittingHWP-1} and Corollary \ref{cor:Psplitting} to see that
\begin{eqnarray*}
q_{F} & = & q|_{\mathcal{F}}-\left|\nabla u\right|_{B}^{2}g_{F}\\
& = & -\left(\kappa u^{2}+\left|\nabla u\right|^{2}\right)g_{F}\\
& = & -\bar{\mu}\left(u\right)g_{F}
\end{eqnarray*}
defines a quadratic form on $F$. In particular, $\bar{\mu}\left(u\right)$ is constant on the horizontal leaves and $v\in W\left(F;-\bar{\mu}\left(u\right)g_{F}\right)$. Moreover, when $k=1$ we clearly have that $\dim W\left(F;-\bar{\mu}\left(u\right)g_{F}\right)=k+1$ as $F = \Real$, while if $k>1$ $\bar{\mu}\left(u\right)$ is constant so Lemma \ref{lem:ReverseObata} implies that $\dim W\left(F;-\bar{\mu}\left(u\right)g_{F}\right)=k+1$. Thus
\[
W=\left\{ \pi_{1}^{*}(u)\cdot\pi_{2}^{*}(v):v\in W\left(F;-\bar{\mu}\left(u\right)g_{F}\right)\right\} .
\]

If $\dim W_{B}=2$, then Corollary \ref{cor:ridigidity} applied to the space $W_{B} \subset W(B; q_B)$ shows that
\[
\left(B,g_{B}\right)=\left(H\times\mathbb{R},g_{H}+dt^{2}\right).
\]
Moreover the functions in $W_{B}$ are constant on $H\times\left\{ t_{0}\right\} $ for all $t_{0}\in\mathbb{R}$. This shows that $u=u\left(t\right)$ and
\[
W_{B}=\left\{ z\in C^{\infty}\left(\mathbb{R}\right):z^{\prime\prime}=z\frac{u^{\prime\prime}}{u}\right\} .
\]
The functions without a $z$ component
\[
W^{F}=\left\{ w\in W:w=\pi_{1}^{*}(u)\cdot\pi_{2}^{*}(v)\right\}
\]
will then form a subspace of dimension $k$. In particular, it is nonempty and thus Proposition \ref{prop:WsplittingHWP-1} shows that
\[
\mathrm{Hess}_{F}v=-v\bar{\mu}\left(u\right)g_{F}
\]
for some $v$. This shows via Corollary \ref{cor:Psplitting} that $\bar{\mu}\left(u\right)$ defines a function on $F$ and that
\begin{eqnarray*}
q_{F} & = & q|_{\mathcal{F}}-\left|\nabla u\right|_{B}^{2}g_{F}\\
& = & -\left(\kappa u^{2}+\left|\nabla u\right|^{2}\right)g_{F}\\
& = & -\bar{\mu}\left(u\right)g_{F}
\end{eqnarray*}
defines a quadratic form on $F$. We can then argue as before that $\dim W\left(F;-\bar{\mu}\left(u\right)g_{F}\right)=k+1$.

To reach a contradiction in this case select $w_{1}=z+uv_{1}\in W$ and $w_{2}=uv_{2}\in W^{F}-\left\{ 0\right\} $. Then
\begin{eqnarray*}
TF & \ni & w_{1}\nabla w_{2}-w_{2}\nabla w_{1}\\
& = & \left(z+uv_{1}\right)\left(u\nabla v_{2}+v_{2}\nabla u\right)-uv_{2}\left(\nabla z+u\nabla v_{1}+v_{1}\nabla u\right)\\
& = & v_{2}\left(z\nabla u-u\nabla z\right)+u^{2}\left(v_{1}\nabla v_{2}-v_{2}\nabla v_{1}\right)+zu\nabla v_{2}.
\end{eqnarray*}
Since $\nabla v_{1},\nabla v_{2}\in TF$ it follows that $z\nabla u-u\nabla z=0$ as $v_{2}$ is non-trivial. But this shows that $z$ is a multiple of $u$ contradicting that $\dim W_{B}=2$.
\end{proof}

Finally we show that we cannot expect $\bar{\mu}\left(u\right)$ to be constant unless we are in the situations covered by Theorems \ref{thm:W-S-notempty} and \ref{thm:W}.

\begin{example}\label{Exa:F=R}
Since $M$ is assumed to be simply connected, Theorems \ref{thm:W-S-notempty} and \ref{thm:W} show that the only case where $\bar{\mu}\left(u\right)$ might not be constant is when $F=\mathbb{R}$. We can construct examples of this type as follows. Fix a base manifold $\left(B,g_{B}\right)$ with a positive function $u:B\rightarrow\left(0,\infty\right)$ such that $\dim W\left(B;\frac{1}{u}\mathrm{Hess}u\right)=1$. Let
\[
M=B\times_{u}\mathbb{R},\: g=g_{B}+u^{2}dt^{2}
\]
and define
\[
q=\frac{1}{u}\mathrm{Hess}_{B}u+\left(\left|\nabla u\right|^{2}-\tau\right)dt^{2}
\]
where $\tau:\mathbb{R}\rightarrow\mathbb{R}$ is any smooth function. This gives us a complete collection of examples where $F=\mathbb{R}$ and $\dim W\left(M;q\right)=2$.
\end{example}

\medskip
\section{Invariant Groups and non-simply connected manifolds}

In this section we obtain a complete classification of when a warped product splitting holds for non-simply connected spaces with $\dim W(M;q) >1$, see Proposition \ref{prop:nonsimplyconn}. We do this through the study of isometries $h \in \mathrm{Iso}(M,g)$ which preserve the quadratic form $q$
\begin{eqnarray*}
h^{*}q=q.
\end{eqnarray*}

We start by extending the definition of $\bar{\mu}$ in (\ref{eq:barmuu}) to all of $W$:
\[
\bmu(w) = \kappa w^2 + \abs{\nabla w}^2.
\]
When $\bmu(u)$ is constant on $M$, so is $\bmu(w)$ for any $w \in W$, see Proposition \ref{prop:mu-construction}. In the following we show that $W\left(M;q\right)$ and $\bar{\mu}$ are preserved by such isometries.

\begin{prop}
If $h\in\mathrm{Iso}\left(M\right)$ and $h^{*}q=q$, then $h^{*}:W\left(M;q\right)\rightarrow W\left(M;q\right)$ preserves the characteristic constant/function $\bar{\mu}$.
\end{prop}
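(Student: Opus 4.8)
The plan is to establish two statements: that $h^{*}$ really maps $W(M;q)$ to itself, and that for every $w\in W(M;q)$ one has $\bar{\mu}(h^{*}w)=\bar{\mu}(w)\circ h$, so that $h^{*}$ preserves the bilinear form $\bar{\mu}$ and, in particular, fixes its value whenever $\bar{\mu}(w)$ is constant. Since $\bar{\mu}(w)=\kappa w^{2}+\left|\nabla w\right|^{2}$, the second statement splits into the identity $\left|\nabla(h^{*}w)\right|^{2}=\left|\nabla w\right|^{2}\circ h$, which is immediate because $dh$ is a linear isometry on each tangent space, and the assertion $h^{*}\kappa=\kappa$, which is the only point requiring work.

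For the first statement I would use that an isometry commutes with the Levi--Civita connection, hence with the Hessian, so $\mathrm{Hess}(h^{*}w)=h^{*}(\mathrm{Hess}\,w)$. Together with $\mathrm{Hess}\,w=wq$ and the hypothesis $h^{*}q=q$ this gives $\mathrm{Hess}(h^{*}w)=h^{*}(wq)=(h^{*}w)(h^{*}q)=(h^{*}w)\,q$, so $h^{*}w\in W(M;q)$; running the argument for $h^{-1}$ as well shows $h^{*}$ is a linear automorphism of $W(M;q)$.

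The heart of the matter is $h^{*}\kappa=\kappa$ for $\kappa=-\rho-\mathrm{tr}Q$. The term $\mathrm{tr}Q=\mathrm{tr}_{g}q$ is a scalar invariant of the pair $(g,q)$, so $h^{*}g=g$ and $h^{*}q=q$ give $h^{*}(\mathrm{tr}Q)=\mathrm{tr}Q$ at once. For $\rho$ I would show that $h$ respects the warped product structure attached to $W(M;q)$. Indeed, since $h^{*}$ is an automorphism of $W(M;q)$ it carries $W_{h(p)}$ onto $W_{p}$, hence preserves the sets of regular and singular points; moreover, if $w(p)=0$ then $w\circ h^{-1}$ lies in $W(M;q)$, vanishes at $h(p)$, and satisfies $\nabla(w\circ h^{-1})|_{h(p)}=dh(\nabla w|_{p})$. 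Thus $dh$ maps the vertical space $\mathcal{F}_{p}=\{\nabla w|_{p}:w\in W(M;q),\,w(p)=0\}$ isomorphically onto $\mathcal{F}_{h(p)}$. Because an isometry also commutes with the Ricci operator, the relation $\mathrm{Ric}_{M}(V)=\rho V$ on $\mathcal{F}$ yields $\mathrm{Ric}_{M}(dh\,V)=dh\,\mathrm{Ric}_{M}(V)=\rho(p)\,dh\,V$ with $dh\,V\in\mathcal{F}_{h(p)}$, so $\rho(h(p))=\rho(p)$. Hence $h^{*}\rho=\rho$, therefore $h^{*}\kappa=\kappa$, and combined with the gradient identity this proves $\bar{\mu}(h^{*}w)=\bar{\mu}(w)\circ h$.

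The main obstacle I anticipate is precisely the invariance of $\kappa$: an isometry preserving $q$ forces $Q$ and $\mathrm{Ric}_{M}$ to commute with $dh$, but a priori this only permutes eigenvalues, so one must argue that the \emph{vertical} eigenvalue $\rho$ is singled out in an isometry-invariant way. The resolution is the observation above that $\mathcal{F}$ is canonically reconstructed from $W(M;q)$ through Proposition \ref{prop:injectionHWP} and Theorem \ref{thm:verticalP}, so that any isometry preserving $q$ automatically preserves $\mathcal{F}$; the degenerate case $k=1$, where $\bar{\mu}(u)$ need not be constant, is handled by the same pointwise identity, since it uses only the vanishing locus of the solutions and not the constancy of $\bar{\mu}$.
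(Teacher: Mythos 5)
Your proof is correct and follows the same basic route as the paper: both arguments show $h^{*}$ maps $W(M;q)$ to itself by commuting the pullback past the Hessian and invoking $h^{*}q=q$, and both reduce everything to the pointwise identity $\bar{\mu}(w\circ h)=\bar{\mu}(w)\circ h$, with $\abs{\nabla(w\circ h)}^{2}=\abs{\nabla w}^{2}\circ h$ coming from $Dh$ being a linear isometry. Where you go beyond the paper is the invariance of $\kappa$: the paper's computation simply carries the factor $\kappa\circ h$ through its chain of equalities, tacitly treating $\kappa\circ h=\kappa$ (in effect it expands $\bar{\mu}(w)\circ h$ rather than the definition of $\bar{\mu}(w\circ h)$, so the two agree only once one knows $\kappa$ is $h$-invariant), whereas you supply the missing justification. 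Your argument for this is sound: since $h^{*}$ is an automorphism of $W(M;q)$, the gradients of solutions vanishing at $p$ are carried by $Dh$ onto those vanishing at $h(p)$, so $Dh(\mathcal{F}_{p})=\mathcal{F}_{h(p)}$ on $M-S$; combined with the fact that an isometry commutes with the Ricci operator and with $\mathrm{Ric}_{M}(V)=\rho V$ on $\mathcal{F}$, this gives $\rho\circ h=\rho$ wherever $\mathcal{F}_{p}\neq 0$, extending to $S$ by continuity, while $\mathrm{tr}Q\circ h=\mathrm{tr}Q$ is immediate from $h^{*}q=q$ and $h^{*}g=g$; hence $\kappa=-\rho-\mathrm{tr}Q$ is $h$-invariant. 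So your proposal is not a different method but a slightly more complete version of the paper's proof, closing a small step the paper leaves implicit; the only contextual caveat is that $\kappa$, $\rho$, and $\mathcal{F}$ are defined via the warped product structure, so this part of the argument presupposes $\dim W(M;q)\geq 2$, which is the standing assumption in the paper's section where $\bar{\mu}$ is introduced.
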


\begin{proof}
Let $w\in W\left(M;q\right)$. Since $h$ is an isometry we have
\begin{eqnarray*}
\mathrm{Hess}\left(w\circ h\right) & = & h^{*}\left(\mathrm{Hess}w\right)\\
& = & \left(w\circ h\right)\left(h^{*}q\right)\\
& = & \left(w\circ h\right)q.
\end{eqnarray*}
This shows that $h^{*}:W\left(M;q\right)\rightarrow W\left(M;q\right)$. Next we note that
\begin{eqnarray*}
\bar{\mu}\left(w\circ h\right) & = & \left(\kappa\circ h\right)\left(w\circ h\right)^{2}+\left|\nabla\left(w\circ h\right)\right|^{2}\\
& = & \left(\kappa\circ h\right)\left(w\circ h\right)^{2}+\left|Dh^{-1}\left(\left(\nabla w\right)\circ h\right)\right|^{2}\\
& = & \left(\kappa\circ h\right)\left(w\circ h\right)^{2}+\left|\left(\nabla w\right)\circ h\right|^{2}\\
& = & \bar{\mu}\left(w\right)\circ h
\end{eqnarray*}
which shows that $\bmu$ is preserved by $h$.
\end{proof}

Let $\Gamma\subset\mathrm{Iso}\left(M,g\right)$ be a subgroup that preserves the quadratic form $q$. Define
\[
W\left(M;q,\Gamma\right)=\left\{ w\in W\left(M;q\right):w\circ h=w,\, \text{for all } h\in\Gamma\right\} \subset W\left(M;q\right)
\]
as the fixed point set of the action of $\Gamma$ on $W\left(M;q\right)$. As  $W=W\left(M;q,\Gamma\right)$ is a subspace of $W(M;q)$, when $M$ is simply connected,  we can apply Theorems \ref{thm:WPHWP}, \ref{thm:W-S-notempty} and \ref{thm:W} to obtain a warped product splitting
\[ M = B \times_u F \]
such that
\[
W\left(M;q,\Gamma\right)=\left\{ \pi_{1}^{*}(u)\cdot\pi_{2}^{*}(v):v\in W\left(F;-\bar{\mu}\left(u\right)g_{F}\right)\right\} .
\]
Moreover,  $\Gamma$ will preserve the foliations $\mathcal{F}$ and $\widehat{\mathcal{F}}$ defined by the subspace $W\left(M;q,\Gamma\right)$ since
\[
Dh^{-1}\left(\left(\nabla w\right)|_{h}\right)=\nabla\left(w\circ h\right)=\nabla w.
\]
Thus $\Gamma$ preserves the distributions and fixes $u_{p}$. In particular, it induces an action on $F$ and an action  on $B$ that leaves $u$ as well as $q_{B}$ invariant.

By considering the different cases for $F$, we obtain the following.
\begin{prop}\label{prop:Gammaaction}
Let $\left(M,g\right)$ be complete and simply connected and assume that $\dim W\left(M;q,\Gamma\right)=k+1\geq2$. Then either
\begin{enumerate}
\item  $\Gamma$ acts trivially on $F$, or
\item $F = \mathbb{R}$ and $\Gamma$ acts via translation on $\mathbb{R}$.
\end{enumerate}
Moreover, if $W\left(M;q,\Gamma\right)$ contains a positive function, then $\Gamma$ acts trivially on $F$.
\end{prop}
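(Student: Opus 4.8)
The plan is to reduce the statement to a question about which isometries of the fiber $F$ can fix the solution space $W(F;-\bar{\mu}(u)g_F)$ pointwise, and then to run through the three possibilities for $F$ supplied by Theorem \ref{thm:WPHWP}. First I would set up the reduction. Every $h\in\Gamma$ preserves the warped product structure and leaves $u$ invariant, so it induces an isometry $h_F$ of $(F,g_F)$ with $\pi_2\circ h=h_F\circ\pi_2$. Writing a typical element of $W(M;q,\Gamma)$ as $\pi_1^{*}(u)\cdot\pi_2^{*}(v)$ with $v\in W(F;-\bar{\mu}(u)g_F)$, the identity $w\circ h=w$ becomes $\pi_1^{*}(u)\cdot\pi_2^{*}(v\circ h_F)=\pi_1^{*}(u)\cdot\pi_2^{*}(v)$; dividing by $u$, which is positive on the interior, gives $v\circ h_F=v$. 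Thus the induced $\Gamma$-action on $F$ fixes every element of $W(F;-\bar{\mu}(u)g_F)$ pointwise, and everything reduces to analyzing such isometries.

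Next I would dispose of the cases where $F$ is a round sphere $\mathbb{S}^{k}(1)$, Euclidean space $\mathbb{R}^{k}$ with $k\geq 2$, or hyperbolic space $H^{k}$. In each of these $\bar{\mu}(u)$ is the (constant) curvature of $F$ by Lemma \ref{lem:ReverseObata}, and $W(F;-\bar{\mu}(u)g_F)$ is the classical solution space: restrictions to $F$ of the ambient linear functions for $\mathbb{S}^{k}$ and $H^{k}$, and affine functions for $\mathbb{R}^{k}$. These functions separate the points of $F$, so $v\circ h_F=v$ for all of them forces $h_F=\mathrm{id}$, which is conclusion (1). The subcase $k=1$, $S\neq\emptyset$ also lands here: the fiber is then the unit circle $\mathbb{S}^{1}(1)$ and $\bar{\mu}(u)$ is the constant value $|\nabla u|^{2}=1$ on $\partial B$, so $W(F;-g_F)$ is spanned by $\cos t$ and $\sin t$, which separate points of the circle.

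The remaining possibility is $F=\mathbb{R}$, which occurs exactly when $k=1$ and $S=\emptyset$ (circle fibers are excluded since $M$ is simply connected). Here $\bar{\mu}(u)=\tau$ may be a non-constant function and $W(F;-\tau g_F)$ is the two-dimensional solution space of $v''=-\tau v$. Every isometry of $(\mathbb{R},dt^{2})$ has the form $t\mapsto\pm t+b$, and I would exclude the orientation-reversing ones: if $h_F(t)=-t+b$ fixed all solutions, then differentiating $v(-t+b)=v(t)$ at the fixed point $t_0=b/2$ would give $v'(t_0)=0$ for every $v\in W(F;-\tau g_F)$, contradicting the fact that $v\mapsto(v(t_0),v'(t_0))$ is a bijection onto $\mathbb{R}^{2}$ by Proposition \ref{prop:injectionHWP}. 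Hence $\Gamma$ acts by translations, which is conclusion (2).

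For the final claim it suffices to treat $F=\mathbb{R}$, since otherwise the action is already trivial. If $W(M;q,\Gamma)$ contains a positive function, then dividing by $u>0$ shows its fiber factor is a positive solution $v>0$ of $v''=-\tau v$. Suppose $\Gamma$ acted by a nontrivial translation $t\mapsto t+b$ fixing the solution space; then every solution, in particular $v$ and a linearly independent companion $\tilde v$, would be $b$-periodic. But the Wronskian $v\tilde v'-v'\tilde v$ is a nonzero constant, so $\phi=\tilde v/v$ has nowhere-vanishing derivative $\phi'=(v\tilde v'-v'\tilde v)/v^{2}$ and is therefore strictly monotone, which is impossible for a periodic function. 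Thus the translation is trivial and $\Gamma$ acts trivially on $F$. I expect the main difficulty to lie not in the individual space-form cases, which are soft once point-separation is recorded, but in the $F=\mathbb{R}$ analysis: ruling out reflections and, above all, extracting the periodicity obstruction from the existence of a positive solution.
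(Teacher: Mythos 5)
Your proposal is correct and follows essentially the same route as the paper: reduce to the induced $\Gamma$-action on $F$ fixing every element of $W\left(F;-\bar{\mu}\left(u\right)g_{F}\right)$, observe that in the space-form cases the solutions separate points (the paper compresses this into ``a close inspection of all cases''), rule out reflections when $F=\mathbb{R}$, and then use exactly the paper's Wronskian argument --- $\left(v_{2}/v_{1}\right)'=W/v_{1}^{2}$ with constant nonzero Wronskian forcing strict monotonicity --- to show a positive solution is incompatible with all solutions being periodic. Your extra details (point separation in each space form, the fixed-point argument $v'(t_{0})=0$ against orientation-reversing isometries via the bijective evaluation map) are sound fill-ins of what the paper leaves implicit.
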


\begin{proof}
We note that $\Gamma$ must leave all elements in $W\left(F;-\bar{\mu}\left(u\right)g_{F}\right)$ invariant. However, a close inspection of all cases shows that the only situation where a nontrivial subgroup of $\mathrm{Iso}\left(F,g_{F}\right)$ fixes all elements in $W\left(F;-\bar{\mu}\left(u\right)g_{F}\right)$ is when $F=\mathbb{R}$ and  all solutions are periodic.

If $W\left(M;q,\Gamma\right)$ contains a positive function, then so must $W\left(F;-\bar{\mu}\left(u\right)g_{F}\right)$.  However, when $F=\mathbb{R}$ it is not possible for  $W\left(F;-\tau dt^2\right)$ to contain a positive function and to have all functions  periodic.  To see this, let $v_1$ and $v_2$ be linearly independent solutions  to $v'' = -\tau v$  and assume $v_1$ is positive.  Then
\[ \left( \frac{v_2}{v_1} \right) ' =  \frac{W}{v_1^2} \]
where $W = v_2' v_1 -v_2 v_1'$ is the Wronskian, which, in this case,  is a non-zero constant.  In particular,  $v_2/v_1$ is strictly monotone, showing that $v_1$ and  $v_2$ can not both be periodic.
\end{proof}

\begin{rem}  \label{rem:PeriodicExamples}
Case (2) does occur.  The simplest example is when  $\bar{\mu} =\tau dt^{2}$   where $\tau$ is a positive constant. Since then
\[
W\left(\mathbb{R};-\tau dt^{2}\right)= A\sin(\sqrt{\tau} t) + B \cos(\sqrt{\tau} t)
\]
and so all solutions are invariant under a translation of length $\frac{2\pi}{\sqrt{\tau}}$.    There are also examples where  $\bar{\mu}$  is not constant. This is related to the problem of finding coexisting solutions to Hill's equation. Specifically, it is possible to choose $\tau\left(t\right)$ as in Example \ref{Exa:F=R} to be periodic with period $2\pi$ and such that the solutions space $W\left(\mathbb{R};-\tau dt^{2}\right)$ consists of $2\pi$ periodic functions (see \cite[Chapter 7]{Magnus-Winkler}.)
\end{rem}

When $\Gamma$ acts trivially on $F$, the  following corollary follows easily from previous results.

\begin{cor}
If $\Gamma$ acts trivially on $F$ and properly on $M$ with only principal isotropy, then
\[
\left(M/\Gamma,g\right)=\left(B/\Gamma\right)\times_{u}F
\]
and
\[
\pi^{*}\left(W\left(M/\Gamma;q\right)\right)=W\left(M;q,\Gamma\right)
\]
where $\pi:M\rightarrow M/\Gamma$ is the quotient map.
\end{cor}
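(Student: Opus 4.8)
The plan is to exploit that, under the stated hypotheses, the $\Gamma$-action on $M=B\times_{u}F$ is concentrated on the base factor, so the quotient is again a warped product of the same type and the pullback $\pi^{*}$ simply identifies solutions downstairs with the $\Gamma$-invariant solutions upstairs. First I would record the structure of the action. By the discussion preceding Proposition~\ref{prop:Gammaaction}, $\Gamma$ preserves the distributions $\mathcal{B}$ and $\mathcal{F}$, fixes $u$ and $q_{B}$, and induces an action on each factor; since by hypothesis it acts trivially on $F$, the action on $M$ has the product form $\gamma\cdot(b,f)=(\gamma\cdot b,\,f)$. Because $\Gamma$ acts properly on $M$ with only principal isotropy and trivially on $F$, the isotropy group at $(b,f)$ equals the isotropy at $b\in B$, so the induced action on $B$ is again proper with only principal isotropy. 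Hence $B/\Gamma$ is a smooth manifold, $M/\Gamma=(B/\Gamma)\times F$ as smooth manifolds, and $\pi$ is a Riemannian submersion; when $\Gamma$ is discrete (the case relevant to Proposition~\ref{prop:nonsimplyconn}) it is a Riemannian covering.

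Next I would descend the metric. Since $u$ and $g_{F}$ are $\Gamma$-invariant and $\Gamma$ acts only on the $B$-factor, the warped metric $g=g_{B}+u^{2}g_{F}$ descends to $g_{B/\Gamma}+\bar{u}^{2}g_{F}$, where $g_{B/\Gamma}$ is the quotient metric and $\bar{u}$ the descent of $u$; writing $u$ for $\bar{u}$ this gives $(M/\Gamma,g)=(B/\Gamma)\times_{u}F$, a warped product over the base space $(B/\Gamma,g_{B/\Gamma},u)$ with $q_{B/\Gamma}=\frac{1}{u}\Hess_{B/\Gamma}u$. The $\Gamma$-invariance of $q$ then lets $q$ descend to a quadratic form on $M/\Gamma$ preserving the two distributions, so that $W(M/\Gamma;q)$ is meaningful.

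Finally I would prove $\pi^{*}(W(M/\Gamma;q))=W(M;q,\Gamma)$ by the standard observation that $\pi^{*}$ injects $C^{\infty}(M/\Gamma)$ onto the $\Gamma$-invariant functions on $M$ and intertwines the operator $w\mapsto\Hess w-wq$ with its analogue on the quotient. Granting this, a function $w$ lies in $W(M;q,\Gamma)$ exactly when it is $\Gamma$-invariant and satisfies $\Hess w=wq$, that is, exactly when $w=\pi^{*}\bar{w}$ for some $\bar{w}$ with $\Hess_{M/\Gamma}\bar{w}=\bar{w}\,q$, i.e.\ $\bar{w}\in W(M/\Gamma;q)$; injectivity of $\pi^{*}$ makes this an equivalence in both directions.

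The point requiring care is precisely the intertwining of Hessian and quadratic form in the last step. When $\Gamma$ is discrete, $\pi$ is a local isometry, $\pi^{*}q=q$, and the intertwining is immediate, so the corollary follows at once; this is the only situation needed for Proposition~\ref{prop:nonsimplyconn}. For positive-dimensional $\Gamma$, $\pi$ is a genuine Riemannian submersion whose fibers lie in $\mathcal{B}$, and the vertical (orbit-direction) component of $\Hess_{M}(\pi^{*}\bar{w})$ picks up the second fundamental form of the orbits; I would then verify that this matches $w\,q$ along those directions using the Section~4 expressions for $q$ in terms of $u$ and the vertical Ricci curvature $\rho$ (Theorems~\ref{thm:horizontalP} and~\ref{thm:verticalP}), all of which are $\Gamma$-invariant and descend to the corresponding data of the warped product $(B/\Gamma)\times_{u}F$.
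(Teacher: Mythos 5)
The paper itself gives no proof of this corollary beyond the remark that it ``follows easily from previous results,'' and for the case in which it is actually used --- $\Gamma$ discrete, acting freely and properly discontinuously, as for the deck transformations in Proposition~\ref{prop:nonsimplyconn} --- your argument is exactly the intended one: $\pi$ is a Riemannian covering, the invariant tensor $q$ descends, $\pi^{*}$ is a bijection from $C^{\infty}(M/\Gamma)$ onto the $\Gamma$-invariant functions and is a local isometry, hence intertwines $w\mapsto\Hess w-wq$, and the equality of solution spaces is immediate. Your descent of the warped product structure, giving $\left(M/\Gamma,g\right)=\left(B/\Gamma\right)\times_{u}F$, is also correct in full generality, including for positive-dimensional $\Gamma$.

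The gap is in your final paragraph. For positive-dimensional $\Gamma$ the deferred verification is not a routine check and in fact cannot be carried out: for $w=\pi^{*}\bar{w}$ only the components of $\Hess_{M}w$ on the horizontal space of the orbit submersion are determined by $\Hess_{M/\Gamma}\bar{w}$, whereas in orbit directions $T,T'$ one has $\Hess_{M}w\left(T,T'\right)=-g\left(\nabla_{T}T',\nabla w\right)$, which is governed by the second fundamental form of the orbits, and nothing in the $\Gamma$-invariance of the Section~4 data forces this to equal $wq\left(T,T'\right)$; only the inclusion $W\left(M;q,\Gamma\right)\subseteq\pi^{*}\left(W\left(M/\Gamma;q\right)\right)$ is automatic. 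Concretely, take $M=H^{3}$ with $g=dt^{2}+e^{2t}ds^{2}+e^{2t}dx^{2}$, $q=g$, and $\Gamma=\Real$ acting by translations in $s$. Then $W\left(M;g\right)=\mathrm{span}\set{e^{t},\,e^{t}s,\,e^{t}x,\,e^{-t}+e^{t}(s^{2}+x^{2})}$, so $W\left(M;g,\Gamma\right)=\mathrm{span}\set{e^{t},e^{t}x}$ produces the splitting with $B=H^{2}=\set{(t,s)}$, $F=\Real_{x}$, $u=e^{t}$, and $\Gamma$ acts trivially on $F$ and properly and freely on $M$, so all hypotheses hold. The quotient is $H^{2}=\left(B/\Gamma\right)\times_{e^{t}}\Real$ with descended form $\bar{q}=\bar{g}$, whose solution space is three-dimensional and contains $\bar{w}=e^{-t}+e^{t}x^{2}$; yet $\pi^{*}\bar{w}$ fails the equation upstairs, since
\[
\Hess\left(\pi^{*}\bar{w}\right)\left(\partial_{s},\partial_{s}\right)=e^{2t}\left(-e^{-t}+e^{t}x^{2}\right)\neq e^{2t}\left(e^{-t}+e^{t}x^{2}\right)=\left(\pi^{*}\bar{w}\right)q\left(\partial_{s},\partial_{s}\right).
\]
So the stated equality genuinely requires $\Gamma$ discrete (equivalently, that $\pi$ be a covering), or an additional hypothesis tying the orbit components of $q$ to the orbit geometry; the verification you propose via Theorems~\ref{thm:horizontalP} and~\ref{thm:verticalP} cannot close this, because those theorems say nothing about $q$ evaluated on directions tangent to the $\Gamma$-orbits inside $\mathcal{B}$.
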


From these results  we can now extract information about the case where $M$ is not simply connected. In that case we obtain a covering map $\pi:\widetilde{M}\rightarrow M$ and can think of $\Gamma=\pi_{1}\left(M\right)$ as acting properly by isometries on the universal covering $\widetilde{M}$. Moreover this action will clearly preserve the pull back of any quadratic form on $M$.  Applying the results to this case gives the following.

\begin{prop}\label{prop:nonsimplyconn}
Assume that $\left(M,g\right)$ is complete and that $q$ is a quadratic form on $M$ with $\dim W\left(M;q\right)=k+1\geq2$.   Let $\widetilde{M}=B\times_{u}F$ be the warped product splitting  on   the universal cover of $M$   coming from $W\left(M;q,\Gamma\right)$ with $\Gamma=\pi_{1}\left(M\right)$.    Then either
\begin{enumerate}
\item $k=1$,  $F = \mathbb{R}$, and $M$ is isometric to a  quotient of  $B \times_{u}  \mathbb{R}$ by a diagonal action of $\Gamma$ that preserves $u$ on B and acts via translations on $\mathbb{R}$, or
\item we have
\[ \left(M,g\right)=\left(B/\Gamma\right)\times_{u}F.\]
\end{enumerate}
In either case we also have
\[
\pi^{*}\left(W\left(M;q\right)\right)=W\left(\widetilde{M};q,\Gamma\right) = \left\{ \pi_{1}^{*}(u)\cdot\pi_{2}^{*}(v):v\in W\left(F;-\bar{\mu}\left(u\right)g_{F}\right)\right\}.
\]
Moreover, if $W\left(M;q\right)$ contains a positive function then we have (2).
\end{prop}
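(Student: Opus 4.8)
The plan is to reduce the whole statement to the universal cover and then assemble the earlier structure theorems with Proposition \ref{prop:Gammaaction}. Let $\pi:\widetilde{M}\rightarrow M$ be the universal covering, equipped with the pulled-back metric and quadratic form $\pi^*q$, and let $\Gamma=\pi_1(M)$ act by deck transformations; this action is free, properly discontinuous, isometric, and preserves $\pi^*q$, with $\widetilde{M}/\Gamma=M$. The first step is the descent dictionary: a smooth function on $M$ is exactly a $\Gamma$-invariant smooth function on $\widetilde{M}$, and since $\pi$ is a local isometry the equation $\mathrm{Hess}\,w=wq$ is preserved under pullback. Hence $\pi^*$ gives an isomorphism
\[
\pi^*:W(M;q)\longrightarrow W(\widetilde{M};\pi^*q,\Gamma),
\]
so that $\dim W(\widetilde{M};\pi^*q,\Gamma)=k+1\geq2$. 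This simultaneously establishes the first equality in the final display of the proposition.

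Next I would feed the subspace $W=W(\widetilde{M};\pi^*q,\Gamma)$ into the structure theory. Since $\widetilde{M}$ is complete and simply connected, Theorem \ref{thm:WPHWP} produces the warped product splitting $\widetilde{M}=B\times_uF$, and Theorems \ref{thm:W-S-notempty} and \ref{thm:W} give
\[
W(\widetilde{M};\pi^*q,\Gamma)=\left\{\pi_1^*(u)\cdot\pi_2^*(v):v\in W(F;-\bar{\mu}(u)g_F)\right\},
\]
which is the second equality in the final display. By the discussion immediately preceding Proposition \ref{prop:Gammaaction}, $\Gamma$ fixes every element of $W(\widetilde{M};\pi^*q,\Gamma)$ by definition, hence preserves the foliations $\mathcal{F}$ and $\widehat{\mathcal{F}}$, fixes $u_p$, and therefore descends to isometries that act on $B$ leaving $u$ and $q_B$ invariant and act separately on the fiber $F$.

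I would then split into the two cases supplied by Proposition \ref{prop:Gammaaction}. If $\Gamma$ acts trivially on $F$, then because the deck action is free and properly discontinuous (in particular proper with trivial, hence principal, isotropy) the corollary preceding the proposition applies verbatim, giving $(M,g)=(\widetilde{M}/\Gamma,g)=(B/\Gamma)\times_uF$; this is case (2). If instead $F=\mathbb{R}$ with $\Gamma$ acting by translations, then $k=\dim F=1$, and since $\Gamma$ already respects the product structure and preserves $u$ on $B$, the quotient $M=(B\times_u\mathbb{R})/\Gamma$ is precisely the diagonal quotient described in case (1). For the final clause, a positive $w_0\in W(M;q)$ pulls back to a positive element of $W(\widetilde{M};\pi^*q,\Gamma)$, so the last assertion of Proposition \ref{prop:Gammaaction} forces the action on $F$ to be trivial, landing us in case (2).

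The main obstacle is not the dichotomy itself, which is handed to us by Proposition \ref{prop:Gammaaction}, but the two bookkeeping points that make the reduction rigorous: cleanly justifying the descent isomorphism $\pi^*:W(M;q)\cong W(\widetilde{M};\pi^*q,\Gamma)$, and verifying that the induced $\Gamma$-action genuinely respects the product, i.e.\ is \emph{diagonal} with a single translation factor on $\mathbb{R}$ rather than mixing base and fiber. The latter is largely prepared by the remark that $\Gamma$ preserves the distributions and fixes $u_p$; the remaining care is to check that the induced map on $F=\mathbb{R}$ is one fixed translation, independent of the base point, which follows from its having to preserve the full fiberwise solution space $W(\mathbb{R};-\bar{\mu}(u)\,dt^2)$.
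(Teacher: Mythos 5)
Your proposal is correct and takes essentially the same approach as the paper: the paper gives no separate proof, stating Proposition \ref{prop:nonsimplyconn} as the direct result of lifting to the universal cover, applying Theorems \ref{thm:WPHWP}, \ref{thm:W-S-notempty} and \ref{thm:W} to the invariant subspace $W(\widetilde{M};q,\Gamma)$, and then invoking Proposition \ref{prop:Gammaaction} and the corollary preceding it — precisely your assembly, including the positive-function clause. Your two "bookkeeping" points (the descent isomorphism $\pi^{*}:W(M;q)\cong W(\widetilde{M};q,\Gamma)$ and diagonality of the action) are likewise handled in the paper's discussion before Proposition \ref{prop:Gammaaction}, where $\Gamma$ is shown to preserve both distributions and fix $u_{p}$, hence to split as a product action.
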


\begin{rem}
Taking $F$ to be one of the examples mentioned in  Remark \ref{rem:PeriodicExamples}  produces examples  showing case (1) does occur.   These examples are locally warped products, since the induced action on $B$ must preserve $u$, but they do not admit a global warped product structure.
\end{rem}

\medskip
\section{Proof of Theorem C}

In this section we prove Theorem C, the uniqueness of warped product metrics under Ricci and scalar curvature assumptions. Recall that $(M^n, g)$ is a complete Riemannian manifold and $w_1$, $w_2$ are two positive functions on $M$. The warped product manifolds $E_1$ and $E_2$ are given by
\begin{eqnarray*}
E_1 = M \times_{w_1} N_1, & & E_2 = M\times_{w_2} N_2
\end{eqnarray*}
where $(N^d_i, h_i)(i=1,2)$ are simply-connected space forms. We assume that $E_1$, $E_2$ have the same scalar curvature and the following condition on Ricci curvatures
\[
\Ric^{E_1}(X, Y) = \Ric^{E_2}(X, Y)
\]
for any $X, Y \in TM$.

Let us briefly describe the main steps in the proof. The condition on Ricci curvatures restricted to $TM \subset TE_i$ shows that there is a quadratic form $q$ on $M$ such that $w_1, w_2$ solve the system $\Hess w = w q$. In particular, $\dim W(M; q) = k+1 \geq 2$ as $w_1$ and $w_2$ are linearly independent. Since $w_1$ and $w_2$ are positive functions, we are in Case (2) of Proposition \ref{prop:nonsimplyconn}. This implies that $M$ is a warped product on a base space $(B, g_B, u)$. It follows that the warped product structures of $E_1$ and $E_2$ have refinements over the the base space $B$ and larger fibers $(F^{k+d}_1, g_1)$ and $(F^{k+d}_2, g_2)$. When $k + d\geq 3$, or $k+d = 2$ and $F_1$, $F_2$ have constant Gauss curvature, we show that $F_1$, $F_2$ are simply-connected space forms with the same curvature. Thus they are isometric and we also obtain an isometry between $E_1$ and $E_2$. This finishes case (1) in the statement of Theorem C. In the exceptional case where $k=d=1$, we show that $F_1$, $F_2$  are diffeomorphic to $\Real^2$ and have the same varying Gauss curvature. This gives case (2) in Theorem C.

\begin{proof}[Proof of Theorem C]
We assume that $w_{1}$ and $w_{2}$ are linearly independent and let $\kappa_i$ be the sectional curvature of $(N_i, h_i)$ for $i=1,2$.

For any two vectors $X, Y\in TM$ we have
\[
\mathrm{Ric}^{E_{i}}(X, Y)=\mathrm{Ric}(X,Y)-\frac{d}{w_{i}}\mathrm{Hess}w_{i}(X,Y).
\]
Since $\mathrm{Ric}^{E_{1}}(X,Y)=\mathrm{Ric}^{E_{2}}(X,Y)$ we have
\[
\frac{1}{w_{1}}\mathrm{Hess}w_{1}(X,Y)=\frac{1}{w_{2}}\mathrm{Hess}w_{2}(X,Y)=q(X,Y)
\]
where
\begin{equation*}
q(X,Y)=\frac{1}{d}\left(\mathrm{Ric}-\mathrm{Ric}^{E_{i}}\right)(X,Y)
\end{equation*}
is a smoothly varying quadratic form on $(M,g)$. So the following vector space $W$ has dimension at least $2$
\[
W(M; q)=\left\{ w\in C^{\infty}(M, g):\mathrm{Hess}w=wq\right\} .
\]
Let $k=\dim W(M;q) -1\geq 1$. It follows from Proposition \ref{prop:nonsimplyconn} that $(M,g)$ is a warped product metric
\begin{equation}\label{eq:MBF}
M=B^{b}\times_{u}F^{k}
\end{equation}
and $w_{i}=u\cdot v_{i}$$(i=1,2)$ where each $v_{i}\in C^{\infty}(F)$ satisfies the following Hessian equation on $(F,g_{F})$
\begin{equation}\label{eq:Hessvi}
\mathrm{Hess}_{F} v_{i}=-\tau v_{i}g_{F}.
\end{equation}
In particular we have
\[
\Delta_F v_i = - k \tau v_i.
\]

Let
\begin{eqnarray*}
F_{1}=F\times_{v_1} N_{1} & \text{with} & g_{1}=g_{F}+v_{1}^{2}h_{1}\\
F_{2}=F\times_{v_2} N_{2} & \text{with} & g_{2}=g_{F}+v_{2}^{2}h_{2}.
\end{eqnarray*}
From the warped product decomposition (\ref{eq:MBF}) the manifolds $E_{1}$ and $E_{2}$ can be written using the new base $(B, g_B)$ and fibers $(F_1, g_1)$, $(F_2, g_2)$:
\begin{eqnarray*}
E_{1}=B\times_{u}F_1 & \text{with} & g_{E_{1}}=g_{B}+u^{2}g_1 \\
E_{2}=B\times_{u}F_2 & \text{with} & g_{E_{2}}=g_{B}+u^{2}g_2.
\end{eqnarray*}
In the following we show that $(F_1, g_1)$ and $(F_2, g_2)$ are isometric unless they are complete surfaces with the same varying Gauss curvature.

\smallskip

In terms of the base $B$ and fiber $F_i$, the scalar curvatures of $E_{1}$ and $E_{2}$ are given by
\begin{eqnarray*}
\mathrm{scal}^{E_{1}} & = & \mathrm{scal}^{B}+\frac{1}{u^{2}}\left(\mathrm{scal}^{F_{1}}-2(k+d)u\Delta_{B}u-(k+d)(k+d-1)\left|\nabla u\right|^{2}\right)\\
\mathrm{scal}^{E_{2}} & = & \mathrm{scal}^{B}+\frac{1}{u^{2}}\left(\mathrm{scal}^{F_{2}}-2(k+d)u\Delta_{B}u-(k+d)(k+d-1)\left|\nabla u\right|^{2}\right).
\end{eqnarray*}
Note that $(N_i, h_i)$ has constant sectional curvature $\kappa_i$ for $i=1,2$. The scalar curvature of $(F_1, g_1)$ is given by
\begin{eqnarray*}
\scal^{F_1} & = & \scal^F + \frac{1}{v_1^2}\left(\scal^{N_1} - 2d v_1 \Delta_{F}v_1 - d(d-1)\abs{\nabla v_1}^2\right) \\
& = & \scal^F + \frac{d}{v_1^2}\left((d-1)\kappa_1 +2k \tau v_1^2 -(d-1)\abs{\nabla v_1}^2 \right) \\
& = & \scal^F + \frac{d(d-1)}{v_1^2}\left(\kappa_1 -\abs{\nabla v_1}^2\right) + 2k \tau d.
\end{eqnarray*}
Similarly we have
\begin{equation*}
\scal^{F_2} = \scal^F + \frac{d(d-1)}{v_2^2}\left(\kappa_2 -\abs{\nabla v_2}^2\right) + 2k \tau d.
\end{equation*}
Since $d\geq 1$, the condition $\scal^{E_1} = \scal^{E_2}$ is equivalent to the following identity on $(F, g_F)$:
\begin{equation}\label{eq:v1v2}
\frac{d-1}{v_1^2}\left(\kappa_1 - \abs{\nabla v_1}^2\right) = \frac{d-1}{v_2^2}\left(\kappa_2 - \abs{\nabla v_2}^2\right).
\end{equation}

Next we compute the Ricci curvatures of $(F_i, g_i)(i=1,2)$. We follow the convention that $X, Y,...$ represents the horizontal vector fields and $V, W, ...$ the vertical ones for the Riemannian submersions $(F_i, g_i) \rightarrow (F, g_F)$. Note that $\Ric^{F_i}(X, V) = 0$. For the nonvanishing term we have
\begin{eqnarray*}
\Ric^{F_i}(X, Y) & = & \Ric^F(X, Y) - \frac{d}{v_i}\Hess_F v_i(X, Y) \\
& = & \Ric^F(X,Y) - \frac{d}{v_i}(-\tau v_i) g_F(X, Y) \\
& = & \Ric^F(X, Y) + \tau d \, g_F(X, Y),
\end{eqnarray*}
and
\begin{eqnarray*}
\Ric^{F_i}(V, W) & = & \Ric^{N_i}(V, W) - v_i^2 h_i(V, W)\left(\frac{\Delta_{F}v_i}{v_i} + (d-1)\frac{\abs{\nabla v_i}^2}{v_i^2}\right) \\
& = & \frac{(d-1)\kappa_i}{v_i^2} g_i(V, W) - g_i(V, W)\left(-k \tau + (d-1)\frac{\abs{\nabla v_i}^2}{v_i^2}\right) \\
& = & g_i(V,W)\left(k \tau + \frac{d-1}{v_i^2}\left(\kappa_i - \abs{\nabla v_i}^2\right)\right).
\end{eqnarray*}
Thus we have the following non-vanishing Ricci curvatures
\begin{eqnarray}
\Ric^{F_i}(X, Y) & = & \Ric^F(X, Y) + \tau d \, g_F(X, Y) \label{eq:RicFiXY} \\
\Ric^{F_i}(V, W) & = & g_i(V, W)\left(k \tau + \frac{d-1}{v_i^2}\left(\kappa_i - \abs{\nabla v_i}^2\right)\right) \label{eq:RicFiVW}
\end{eqnarray}

\smallskip

\textsc{Case A.} We assume that $k\geq 2$. In this case $\tau$ is a constant and $(F, g_F)$ is a simply-connected space form with sectional curvature $\tau$. Using the equation $\Hess_F v_i = -\tau v_i g_F$ we have
\[
\nabla\left(\tau v_i^2 + \abs{\nabla v_i}^2\right) = 2\tau v_i \nabla v_i + 2\Hess_F v_i(\nabla v_i) = 0
\]
which shows that
\begin{equation}\label{eq:mui}
\mu_i = \tau v_i^2 + \abs{\nabla v_i}^2
\end{equation}
is a constant on $F$. So the identity (\ref{eq:v1v2}) is equivalent to
\[
\frac{d-1}{v_1^2}\left(\kappa_1 +\tau v_1^2 - \mu_1\right) = \frac{d-1}{v_2^2}\left(\kappa_2 +\tau v_2^2 - \mu_2\right)
\]
i.e.,
\[
\frac{d-1}{v_1^2}(\kappa_1 - \mu_1) = \frac{d-1}{v_2^2}(\kappa_2 - \mu_2).
\]
Since $v_1$ and $v_2$ are linearly independent, we conclude that
\begin{equation}\label{eq:kappamu0}
(d-1)(\kappa_1- \mu_1) = (d-1)(\kappa_2 - \mu_2) = 0.
\end{equation}

We consider the Ricci curvatures of $(F_i, g_i)$ in this case. Using the equation (\ref{eq:mui}) of $\mu_i$ the Ricci curvatures in equations (\ref{eq:RicFiXY}) and (\ref{eq:RicFiVW}) can be simplified as
\begin{eqnarray*}
\Ric^{F_i}(X, Y) & = & (k+d-1)\tau g_i (X, Y) \\
\Ric^{F_i}(V, W) & = & g_i(V, W)\left(k \tau + \frac{d-1}{v_i^2}\left(\kappa_i + \tau v_i^2 - \mu_i\right)\right) \\
& = & (k+d-1)\tau g_i(V,W) + \frac{(d-1)(\kappa_i - \mu_i)}{v_i^2}g_i(V, W) \\
& = & (k+d-1)\tau g_i(V,W).
\end{eqnarray*}
In the last equality we used the identity (\ref{eq:kappamu0}). It follows that both $(F_1, g_1)$ and $(F_2, g_2)$ are $(k+d-1)\tau$-Einstein manifolds. So $(F_1, g_1)$ and $(F_2, g_2)$ are isometric to each other by Lemma \ref{lem:spaceformEinstein} which is stated after this proof.

\smallskip

\textsc{Case B.} We assume that $k=1$. We have the following two cases:
\begin{enumerate}
\item[(B.1) ] $\tau(t)$ is a constant function.
\item[(B.2) ] $\tau(t)$ is not a constant function.
\end{enumerate}

\smallskip

In both cases the Hessian equation (\ref{eq:Hessvi}) for $v=v(t)$ reduces to
\begin{equation}\label{eq:vddt}
v''(t) + \tau v(t) = 0.
\end{equation}

In Case (B.1) from examples in section 1, we know that $\tau < 0$, $F = \Real^1$ and both $(F_1, g_1)$ and $(F_2, g_2)$ are hyperbolic spaces with sectional curvature $\tau$. So they are isometric.

In Case (B.2), we have

\begin{claim}
If $\tau(t)$ is not a constant function, then $d=1$.
\end{claim}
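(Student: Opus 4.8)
The plan is to argue by contraposition: assuming $d\geq 2$, I will show that $\tau$ must be constant, which is exactly the claim. Since $k=1$ makes $F$ one-dimensional, with coordinate $t$, we have $\abs{\nabla v_i}^2=(v_i')^2$, and because $d-1\neq 0$ the scalar-curvature identity (\ref{eq:v1v2}) says that
\[
\psi := \frac{\kappa_1 - (v_1')^2}{v_1^2} = \frac{\kappa_2 - (v_2')^2}{v_2^2}
\]
is one and the same function. I would work on the open set $\Omega=\set{t : v_1(t)\neq 0,\ v_2(t)\neq 0}$, on which $\psi$ is well defined and smooth. This set is dense: $v_1$ and $v_2$ cannot vanish at a common point, since their Wronskian $W=v_1 v_2'-v_1' v_2$ is a nonzero constant (it is nonzero by linear independence, and constant because (\ref{eq:vddt}) has no first-order term).

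Next I would extract a differential relation. Rewriting the defining equation as $\kappa_i-(v_i')^2=\psi\,v_i^2$, differentiating in $t$, and substituting $v_i''=-\tau v_i$ from (\ref{eq:vddt}) yields
\[
2(\tau-\psi)\,v_i' = \psi'\,v_i,\qquad i=1,2,
\]
on $\Omega$. Multiplying the $i=1$ identity by $v_2$ and the $i=2$ identity by $v_1$ and subtracting eliminates $\psi'$ and leaves
\[
2(\tau-\psi)\,(v_1' v_2 - v_2' v_1) = 0.
\]
Since $v_1' v_2 - v_2' v_1 = -W\neq 0$, this forces $\tau=\psi$ on $\Omega$. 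Feeding $\psi=\tau$ back into the displayed first-order relation gives $\tau' v_i=0$, and as $v_i\neq 0$ on $\Omega$ we conclude $\tau'=0$ there; by continuity $\tau$ is constant on all of $F$. This contradicts the hypothesis of Case (B.2) and thereby proves $d=1$.

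The only real subtlety, and the step I would handle most carefully, is the behavior at the isolated zeros of $v_1$ and $v_2$, where the ratio defining $\psi$ is a priori singular. Restricting to the dense open set $\Omega$ and invoking the constancy and nonvanishing of the Wronskian sidesteps this cleanly: it guarantees both that $\psi$ is a genuine smooth function on $\Omega$ and that at every point at least one $v_i$ is nonzero, so that $\tau'=0$ propagates from $\Omega$ to all of $F$ by continuity. Everything else is a routine first-order computation driven entirely by the structural equation $v_i''=-\tau v_i$.
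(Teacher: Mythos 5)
Your proof is correct, and it closes the argument by a genuinely more elementary route than the paper. Both proofs share the same opening moves: divide (\ref{eq:v1v2}) by $d-1$, differentiate the resulting identity, substitute $v_i''=-\tau v_i$ from (\ref{eq:vddt}), and exploit the fact that the Wronskian of $v_1,v_2$ is a nonzero constant. The paper, working with $f_i=\log v_i$ (legitimate since the $v_i$ are positive in this setting), arrives at a pointwise dichotomy --- either $f_1'(t)=f_2'(t)$ or $\kappa_ie^{-2f_i}+f_i''=0$ --- which it resolves by an ``almost everywhere plus continuity'' argument, and then, having deduced $\kappa_ie^{-2f_i}-(f_i')^2=\tau$ (your $\psi=\tau$), it feeds this into the Ricci computations (\ref{eq:RicFiVWReal1})--(\ref{eq:RicVWf}) to show $(F_i,g_i)$ would satisfy $\Ric^{F_i}=\tau d\, g_i$, and finally invokes Schur's lemma, using $\dim F_i=d+1\geq 3$, to force $\tau$ constant. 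Your elimination trick --- multiplying the two relations $2(\tau-\psi)v_i'=\psi' v_i$ crosswise by $v_2$ and $v_1$ and subtracting to get $2(\tau-\psi)(v_1'v_2-v_2'v_1)=0$ --- sidesteps the dichotomy and the a.e.\ argument entirely, since the Wronskian factor never vanishes; and once $\psi=\tau$, the same first-order relation reads $\tau'v_i=0$, so $\tau'=0$ without any curvature computation or appeal to Schur. In effect you use $d\geq2$ only once (to divide by $d-1$), whereas the paper uses it a second time for Schur's lemma. What the paper's longer route buys is geometric content: it exhibits that in the borderline case the fibers $(F_i,g_i)$ would have to be Einstein, which ties into Lemma \ref{lem:spaceformEinstein} and the Einstein-uniqueness theme of the paper; what yours buys is brevity, self-containedness, and (via your careful treatment of the set $\Omega$, which is actually all of $\Real$ here since the $v_i$ are positive) validity even if the $v_i$ were allowed isolated zeros.
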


We argue by contradiction. Specifically, assume that  $d\geq 2$ and $\tau(t)$ is not constant.
Identity (\ref{eq:v1v2}) from the condition $\scal^{E_1} = \scal^{E_2}$ reduces to
\begin{equation}\label{eq:v1v1Real1}
\frac{d-1}{v_1^2}(\kappa_1 - (v_1'(t))^2) = \frac{d-1}{v_2^2}(\kappa_2 - (v_2'(t))^2).
\end{equation}
The Ricci curvatures on $(F_i, g_i)$ have the following form
\begin{eqnarray}
\Ric^{F_i}(\partial_t, \partial_t) & = & \tau d \label{eq:RicFiXYReal1}\\
\Ric^{F_i}(V, W) & = & g_i (V, W)\left(\tau + \frac{d-1}{v_i^2}\left(\kappa_i - (v_i'(t))^2\right)\right). \label{eq:RicFiVWReal1}
\end{eqnarray}
Let $f_i(t) = \log v_i(t)$($i=1,2$) and none of them is a constant. Otherwise $\tau(t)$ would be equal to zero by the differential equation (\ref{eq:vddt}). The Ricci curvature in (\ref{eq:RicFiVWReal1}) can be written as
\begin{equation}\label{eq:RicVWf}
\Ric^{F_i}(V,W) = g_i(V,W)\left(\tau + (d-1)(\kappa_i e^{-2f_i} - (f_i'(t))^2)\right).
\end{equation}
Equations (\ref{eq:vddt}) and (\ref{eq:v1v1Real1}) are equivalent to
\begin{equation}\label{eq:fddtau}
f_i''(t) + (f_i'(t))^2 + \tau(t) = 0
\end{equation}
and
\begin{equation}\label{eq:f1f2}
\kappa_1 e^{-2f_1} - (f_1'(t))^2 = \kappa_2 e^{-2f_2} - (f_2'(t))^2.
\end{equation}
Differentiating equation (\ref{eq:f1f2}) yields
\[
-2\left(\kappa_1 e^{-2f_1}f_1'(t) + f_1'(t) f_1''(t)\right) = -2\left(\kappa_2 e^{-2f_2}f_2'(t) + f_2'(t) f_2''(t)\right)
\]
i.e.,
\begin{equation}\label{eq:fdfdd}
f_1'(t)\left(\kappa_1 e^{-2f_1} + f_1''(t)\right) = f_2'(t)\left(\kappa_2 e^{-2f_2} + f_2''(t)\right).
\end{equation}
On the other hand adding equation (\ref{eq:fddtau}) to equation (\ref{eq:f1f2}) yields
\[
\kappa_1 e^{-2f_1} + f_1''(t) + \tau(t) = \kappa_2 e^{-2f_2} + f_2''(t) + \tau(t),
\]
i.e.,
\[
\kappa_1 e^{-2f_1} + f_1''(t) = \kappa_2 e^{-2f_2} + f_2''(t).
\]
Comparing the last equation with equation (\ref{eq:fdfdd}) implies that either $f_1'(t) = f_2'(t)$ or
\begin{equation}\label{eq:kappaf}
\kappa_1 e^{-2f_1} + f_1''(t) = \kappa_2 e^{-2f_2} + f_2''(t) = 0.
\end{equation}
If $f'_1(t) = f'_2(t)$ on an open interval then $v_1 = C v_2$ on that interval. So the Wronksian of $v_1$ and $v_2$ is zero on that interval. Since $v_1$ and $v_2$ satisfy the same equation $v''(t) + \tau(t) v(t) = 0$, the Wronskian is constant, and so it is zero everywhere that contradicts the linear independence assumption of $v_1$ and $v_2$. It follows that  $f_1'(t) \ne f_2'(t)$ almost everywhere implying equations in (\ref{eq:kappaf}) hold everywhere by continuity. Subtracting equation (\ref{eq:fddtau}) from equations (\ref{eq:kappaf}) yields
\[
\kappa_i e^{-2f_i} - (f_i'(t))^2 = \tau(t).
\]
Plugging this in to the Ricci curvature in (\ref{eq:RicVWf}) shows that $\Ric^{F_i}(V, W) = \tau d \, g_i(V,W)$. Combining with the fact $\Ric^{F_i}(\partial_t, \partial_t) = \tau d$ we see that the Ricci curvature of $(F_i, g_i)$ is $\tau d$ and thus does not depend on the vector we choose. From Schur's lemma it follows that $\tau(t)$ is a constant function as $\dim F_i = d+1 \geq 3$. This gives us the desired contradiction.

So we have $d=1$, $F_1$ and $F_2$ are diffeomorphic to $\Real^2 =\set{(t,x)}$ and both metrics $g_1$ and $g_2$ have the same Gauss curvature $\tau(t)$. From Example \ref{ex:surfacenonisometric} there are such surfaces with metrics $g_i = dt^2 + v^2_i(t) dx^2$($i=1,2$) that are not isometric. This gives us Case (2).

Except for examples in case (2), we know that $(F_1, g_1)$ is isometric to $(F_2, g_2)$. So the isometry between $E_1$ and $E_2$ follows easily. This finishes the proof.
\end{proof}

Next we state and prove the lemma used in the proof of Theorem C.  

\begin{lem}\label{lem:spaceformEinstein}
Let $(F^k, g_F)$ be a simply-connected space form with two positive functions $v_1, v_2 \in C^\infty(F)$. Let $(N_1^d, h_1)$ and $(N_2^d, h_2)$ be two simply-connected space form with $k+d \geq 3$. Suppose $(F_1, g_1)$ and $(F_2, g_2)$ are two warped product manifolds as follows
\begin{eqnarray*}
F_1 = F \times N_1 & \text{with} & g_1 = g_F + v_1^2 h_1 \\
F_2 = F \times N_2 & \text{with} & g_2 = g_F + v_2^2 h_2.
\end{eqnarray*}
If both $(F_1, g_1)$ and $(F_2, g_2)$ are Einstein manifolds with the same Einstein constant, then they are isometric to each other.
\end{lem}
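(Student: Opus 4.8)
The plan is to show that the Einstein hypothesis rigidly determines the isometry type of each total space $(F_i,g_i)$ in terms of the data $k$, $d$, the constant curvature $\tau$ of $(F,g_F)$, and the common Einstein constant $\Lambda$; since none of this data depends on $i$, the two spaces will be isometric. First I would read off the horizontal consequence of the Einstein condition. As $(F,g_F)$ is a space form, $\Ric^F=(k-1)\tau g_F$, and the warped-product formula $\Ric^{F_i}(X,Y)=\Ric^F(X,Y)-\frac{d}{v_i}\Hess_F v_i(X,Y)$ for horizontal $X,Y$ turns $\Ric^{F_i}=\Lambda g_i$ into
\[
\Hess_F v_i=c\,v_i\,g_F,\qquad c=\frac{(k-1)\tau-\Lambda}{d}.
\]
The essential point is that $c$ depends only on $k,d,\tau,\Lambda$, so it is the same for $i=1,2$. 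Differentiating, $\nabla(\abs{\nabla v_i}^2-c v_i^2)=2\Hess_F v_i(\nabla v_i)-2cv_i\nabla v_i=0$, so $\mu_i:=\abs{\nabla v_i}^2-cv_i^2$ is a constant on $F$.

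Next I would bring in the vertical Einstein equation. The vertical Ricci identity for $F\times_{v_i}N_i$ reads $\Lambda v_i^2+v_i\Delta_F v_i+(d-1)\abs{\nabla v_i}^2=(d-1)\kappa_i$; substituting $\Delta_F v_i=kcv_i$ and $\abs{\nabla v_i}^2=\mu_i+cv_i^2$ and simplifying collapses everything to the single relation
\[
(k-1)(\tau+c)\,v_i^2=(d-1)(\kappa_i-\mu_i).
\]
In parallel, O'Neill's formulas give the sectional curvatures of $(F_i,g_i)$: they equal $\tau$ on two-planes tangent to $F$, $-c$ on mixed two-planes, and $\frac{\kappa_i-\mu_i}{v_i^2}-c$ on two-planes tangent to $N_i$.

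The case analysis is then driven by this relation together with $k+d\geq3$. If $k=1$ (so $d\geq2$) the left side vanishes, forcing $\kappa_i=\mu_i$; if $d=1$ (so $k\geq2$) the right side vanishes, forcing $\tau+c=0$; and if $k,d\geq2$ with $\tau+c=0$ the right side again gives $\kappa_i=\mu_i$. In each of these situations every sectional curvature that actually occurs takes the common value $-c$ (which equals $\tau$ whenever horizontal planes are present), so $(F_i,g_i)$ has constant curvature. The only remaining possibility is $k,d\geq2$ with $\tau+c\neq0$; there the relation forces $v_i^2$, hence $v_i$, to be constant, whence $\Hess_F v_i=0$, $c=0$, and $(F_i,g_i)$ is a Riemannian product of $(F,g_F)$ with a $d$-dimensional space form whose curvature is pinned to $\frac{(k-1)\tau}{d-1}$ by the vertical equation. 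In every branch the isometry type of $(F_i,g_i)$ is determined by $k,d,\tau,\Lambda$ alone. To finish, I would note that $(F,g_F)$ and $(N_i,h_i)$ are complete and simply connected, that $F_i$ is simply connected as a topological product, and that the distance non-increasing projection $F_i\to F$ makes $(F_i,g_i)$ complete; the classical uniqueness of complete simply connected space forms (applied to each factor in the product branch) then gives $(F_1,g_1)\cong(F_2,g_2)$.

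The step I expect to be the main obstacle is the degenerate branch $k,d\geq2$ with $\tau+c\neq0$: there $(F_i,g_i)$ need \emph{not} have constant curvature, and the argument must instead show that the product-of-space-forms it becomes is rigidly fixed by the common Einstein constant rather than by the individual $\kappa_i$ and $v_i$. A secondary point requiring care is the completeness of the warped products, which is precisely what licenses the appeal to uniqueness of simply connected space forms.
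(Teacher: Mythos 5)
Your proof is correct, but it takes a genuinely different route from the paper's. The paper first disposes of linearly dependent $v_1,v_2$ in one sentence, and then, for linearly independent solutions, invokes a two-solution rigidity result: the system $\Hess_F v=\frac{v}{d}\left(\Ric^F-(k+d-1)\tau g_F\right)$ having two independent solutions on a simply connected space form forces $\Ric^F=(k-1)\tau g_F$ (citing \cite{HPWVirt-Soln}; in the notation of this paper this is essentially Lemma \ref{lem:ReverseObata}), which pins the base curvature to $\Lambda/(k+d-1)$ --- precisely your condition $c=-\tau$ --- after which each $F_i$ has constant curvature and the $k=1$ case is handled by the analogous ``two positive solutions force $\tau<0$, hence hyperbolic'' argument. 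You never let the two solutions interact: you classify each Einstein warped product $F\times_{v_i}N_i$ individually, and since the branch dichotomy $\tau+c=0$ versus $\tau+c\neq 0$ is decided by the shared data $(k,d,\tau,\Lambda)$, both $F_i$ land in the same model space. Your ``degenerate'' branch ($k,d\geq 2$, $\tau+c\neq 0$, forcing $v_i$ constant and $F_i$ a metric product of space forms with second-factor curvature $(k-1)\tau/(d-1)$) is exactly the linearly dependent case the paper dismisses without computation, and your relation $(k-1)(\tau+c)v_i^2=(d-1)(\kappa_i-\mu_i)$ actually substantiates that dismissal; it also makes visible the genuinely non-constant-curvature Einstein warped products (e.g.\ products of spheres) permitted by the hypotheses. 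What your route buys is self-containedness --- no appeal to the external rigidity examples and no reduction to the independent case; what the paper's buys is brevity in the main case. Two points to tighten: the phrase ``every sectional curvature that actually occurs'' should be a curvature-tensor statement --- under your structural hypotheses (base a space form, $\Hess_F v_i=cv_ig_F$, fiber a space form) the warped-product curvature tensor has no independent components beyond the horizontal, vertical, and mixed ones you computed, so agreement of those three values does yield constant curvature (the same appeal to O'Neill's curvature formulas the paper makes); and completeness of $g_i$ needs slightly more than the distance-nonincreasing projection, namely the standard fact that a warped product of complete factors with everywhere positive warping function is complete (a local positive lower bound on $v_i$ near the limit of the base components controls the fiber components of a Cauchy sequence).
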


The above lemma is a special case of a more general uniqueness result that two $\lambda$-Einstein metrics on warped products with a fixed base manifold and simply-connected space form fibers are isometric. We give a proof in this special case that does not appeal to the general result.

\begin{proof}
Note that $F_1$ and $F_2$ are isometric if $v_1$ and $v_2$ are linearly dependent. In the following we assume that they are linearly independent. Let $\kappa_i$ be the sectional curvatures of $(N_i, h_i)$ and $(k+d-1)\tau$ the Einstein constant of $(F_i, g_i)$ for $i=1,2$.

We consider the case $k\geq 2$ first. For any vectors $X, Y\in TF$ we have
\[
\Ric^{F_i}(X, Y) = \Ric^F (X, Y) - \frac{d}{v_i}\Hess v_i(X, Y) = (k+d-1)\tau g_F(X, Y),
\]
i.e.,
\[
\Hess v_i = \frac{v_i}{d}\left(\Ric^F - (k+d-1)\tau g_F\right)
\]
on $(F, g_F)$. The fact that the system above has more than one solution on a simply-connected space form implies that
\[
\Ric^F = (k-1)\tau g_F,
\]
see \cite[Example 2.1]{HPWVirt-Soln}. So the sectional curvature of $F$ is $\tau$ and
\[
\Hess v_i = - \tau v_i g_F.
\]
Using the formula of $\Ric^{F_i}(V,W)$ for vectors $V, W \in TN_i$ and the identity $\Delta_F v_i = -k \tau v_i$ we obtain
\[
\frac{d-1}{v_i^2}(\kappa_i - \abs{\nabla v_i}^2) = (d-1)\tau.
\]
Then the Riemannian curvature formulas of warped product metric \cite[Proposition 42 in Chapter 7]{O'Neill} implies that $(F_i, g_i)$ has constant curvature $\tau$. Since both $F_1$ and $F_2$ are simply-connected and have the same curvature, they are isometric.

In the case when $k=1$, we have $(F, g_F) = (\Real^1, dt^2)$ with $\Ric^F = 0$ and $v_i = v_i(t)$($i=1,2$). Then $\Ric^{F_i}(\partial_t, \partial_t) = \tau d$ yields the following differential equation
\[
v_i''(t) + \tau v_i(t) = 0.
\]
The fact that the above equation has more than one positive solutions implies that $\tau < 0$ and then $(F_1, g_1)$ and $(F_2, g_2)$ are simply-connected hyperbolic space with curvature $\tau$, see \cite[Example 1.2]{HPWVirt-Soln}. So we also conclude that $F_1$ and $F_2$ are isometric and this finishes the proof.
\end{proof}

We construct two non-isometric complete metrics on $\Real^2 = \set{(t,x)}$ with the same varying Gauss curvature $\tau(t)$.
\begin{example}\label{ex:surfacenonisometric}
Let $v_1(t)$ be a smooth positive function on $\Real$ and $g_1 = dt^2 + v_1(t)^2 dx^2$ a warped product metric on $\Real^2$. The Gauss curvature of $g_1$ at $(t,x)$ is given by
\[
\tau(t) = - \frac{v_1''(t)}{v_1(t)}.
\]
Finding another metric $g_2 = dt^2 + v_2(t)^2 dx^2$ with the same Gauss curvature is equivalent to finding another positive solution $v_2(t)$ with
\[
v''_2(t) + \tau (t)v_2(t) = 0.
\]
Let $v_2(t) = v_1(t) u(t)$ and then $u(t)$ can be solved as
\[
u(t) = C_1 \int_0^t\frac{1}{v^2_1(s)} ds + C_2
\]
for arbitrary constants $C_1$ and $C_2$. By rescaling we assume that $C_1 = 1$. Positivity of $v_2(t)$ implies $u(t) > 0$ for all $t$. Since $u(t)$ is monotone increasing this is equivalent to the following inequality
\begin{equation}\label{eq:uinfty}
u(-\infty) = \int_{0}^{-\infty} \frac{1}{v^2_1(s)} ds + C_2 \geq 0.
\end{equation}
So for any positive function $v_1(t)$ which satisfies the above inequality (\ref{eq:uinfty}) for some constant $C_2$ there is another metric $g_2 = dt^2 + \left(u(t)v_1(t)\right)^2 dx^2$ having the same Gauss curvature. For example, let $v_1(t) = e^{(t^2/2)}$. Then the Gauss curvature is $\tau(t) = -t^2 -1$ and $v_2(t) = u(t) v_1(t)$ with
\[
u(t) = \int_0^t e^{-s^2}ds + 1 = \frac{\sqrt{\pi}}{2}\mathrm{erf}(t) + 1 > 0,
\]
where $\mathrm{erf}(t)$ is the Gauss error function bounded below by $-1$.

Next we show that these two metrics are not isometric. Suppose not, then there is a diffeomorphism $\sigma : \Real^2 \rightarrow \Real^2$ sending $g_1$ to $g_2$, i.e., $\sigma^*(g_1) = g_2$. Fix an interval $I$ where $\tau'(t) \ne 0$ for all $t\in I$. Let $L_t = \set{(t,x) : x\in \Real}$ be the $t$-fiber and note that the second fundamental form of $L_t$ is given by $(\log v_i(t))'$. Since the Gauss curvature $\tau(t)$ is preserved by $\sigma$, the image of $L_t$ with $\tau'(t) \ne 0$ is another fiber $L_{f(t)}$ where $f(t)$ is a smooth function on $I$ and $\tau(f(t))= \tau(t)$. This means that the isometry must also preserve the horizontal directions. These directions are simply the curves $t \rightarrow (t,x)$. As these curves are horizontal geodesics they will be mapped to horizontal geodesics. In particular, $f(t)$ is it self an isometry. In our case this means that $f(t)=t$ or $f(t)=-t$. However as $g_1$ is invariant under $(t,x) \rightarrow (-t,x)$, we can assume that $f(t)=t$. As the isometry also preserves the second fundamental forms of the fibers it follows that
\[
\frac{v_1'(t)}{v_1(t)} = \frac{v_2'(t)}{v_2(t)}\quad \text{for}\quad t \in I,
\]
and thus $v_1(t)$ and $v_2(t)$ are linearly dependent on $I$. So they are linearly dependent on the whole $\Real$ as they solve the same differential equation $v''(t) + \tau(t) v(t) = 0$. This contradicts the linear independency assumption and thus $g_1$ and $g_2$ are not isometric.

Note that the same analysis works for any two $v_1$ and $v_2$ solving $v'' + \tau (t)v = 0$ where $\tau (t)$ is strictly monotone.
\end{example}

\begin{rem}\label{rem:surfacenonisometric}
Let $(B, g_B, u)$ be a base space with $u$ a constant function, i.e., the quadratic form $q = 0$. Let $\Sigma_1$ and $\Sigma_2$ be the non-isometric isocurved complete surfaces in Example \ref{ex:surfacenonisometric}. Then $E_1 = B\times \Sigma_1$ and $E_2 = B \times \Sigma_2$ with product metrics are not isometric. Otherwise the universal covers $\tilde{E}_i = \tilde{B}\times \Sigma_i$ would have two different de Rham factorizations which contradicts de Rham decomposition Theorem \cite{KobayashiNomizu}.
\end{rem}

\medskip
\section{Miscellaneous results}
In this section we present some related results. In subsection 8.1 we study the base space in more detail. In particular, we extend $\bmu$ to all functions in $W$. This extension defines a bilinear form on $W$ for many examples of $(M,g)$ that we are interested in. In subsection 8.2 we show that the exterior square $\wedge^2 W$ has a Lie algebra structure using the bilinear form $\bmu$ and prove that the map $\iota$ in (\ref{eq:iota}) is a Lie algebra homomorphism.

\subsection{Base Space Structure}
Recall that for $w \in W$ we have
\[
\bmu(w) = \kappa w^2 + \abs{\nabla w}^2.
\]

\begin{prop}\label{prop:mu-construction}
If $w=\pi_{1}^{*}(u)\cdot\pi_{2}^{*}(v)\in W$, then
\[
\nabla\left(\kappa w^{2}+\left|\nabla w\right|^{2}\right)=\frac{w^{2}}{u^{2}}\nabla\left(\kappa u^{2}+\left|\nabla u\right|^{2}\right)
\]
and
\[
\kappa w^{2}+\left|\nabla w\right|^{2}=\bar{\mu}\left(u\right)v^{2}+\left|\nabla v\right|_{F}^{2}
\]
is a constant on $M$ when $k> 1$ or $S \ne \emptyset$.
\end{prop}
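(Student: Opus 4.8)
The plan is to compute everything in terms of the warping function $u$ on $B$ and the fiber factor $v$ on $F$, exploiting the product structure $w=\pi_{1}^{*}(u)\cdot\pi_{2}^{*}(v)$ together with two facts established earlier: that $\bmu(u)$ is a function on $F$ (Theorems \ref{thm:W-S-notempty} and \ref{thm:W}), and that $v$ solves the fiber equation $\Hess_F v = -\bmu(u)\,v\,g_F$ (Proposition \ref{prop:WsplittingHWP-1}). The only genuine care needed is bookkeeping between quantities computed in the metric $g$ on $M$ and those computed on the fiber $F$, which differ by powers of the warping factor $u$.

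First I would establish the second displayed identity. Writing $w=uv$ and using that $\nabla u$ is horizontal while the $g$-gradient of the fiber function $v$ is vertical, the two summands in $\nabla w = v\nabla u + u\nabla v$ are orthogonal, so $\abs{\nabla w}^2 = v^2\abs{\nabla u}^2 + u^2\abs{\nabla v}^2$. The standard warped-product relation $\abs{\nabla v}^2 = u^{-2}\abs{\nabla v}_F^2$ (the gradient of a fiber function picks up the warping factor) gives $u^2\abs{\nabla v}^2 = \abs{\nabla v}_F^2$. Combining this with $\kappa w^2 = \kappa u^2 v^2$ yields
\[
\bmu(w) = \kappa u^2 v^2 + v^2\abs{\nabla u}^2 + \abs{\nabla v}_F^2 = v^2\bmu(u) + \abs{\nabla v}_F^2,
\]
which is exactly the claimed formula.

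For the gradient identity I would observe that, since $\bmu(u)$ is a function on $F$, the expression $v^2\bmu(u)+\abs{\nabla v}_F^2$ is a function on $F$ as well; hence $\bmu(w)$ and $\bmu(u)$ are both fiber functions, and the claimed identity $\nabla\bmu(w) = (w^2/u^2)\nabla\bmu(u) = v^2\nabla\bmu(u)$ is an identity of vertical fields, equivalent to its counterpart $\nabla^F\bmu(w) = v^2\nabla^F\bmu(u)$ on $F$. Differentiating $\bmu(w)=v^2\bmu(u)+\abs{\nabla v}_F^2$ on $F$ and inserting $\Hess_F v = -\bmu(u)\,v\,g_F$ to evaluate $\nabla^F\abs{\nabla v}_F^2 = -2\bmu(u)\,v\,\nabla^F v$, the terms $2\bmu(u)\,v\,\nabla^F v$ and $-2\bmu(u)\,v\,\nabla^F v$ cancel, leaving precisely $v^2\nabla^F\bmu(u)$.

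Finally, the constancy assertion follows at once: when $k>1$ or $S\neq\emptyset$, Theorems \ref{thm:W-S-notempty} and \ref{thm:W} give that $\bmu(u)$ is constant on $M$, so $\nabla\bmu(u)=0$, and the gradient identity forces $\nabla\bmu(w)=0$ on the connected manifold $M$. The crux of the whole argument is the cancellation in the previous step, which relies entirely on the fiber Hessian equation $\Hess_F v = -\bmu(u)\,v\,g_F$; everything else is orthogonal decomposition of $\nabla w$ and careful tracking of the $u^{2}$ factors relating $g$- and $g_F$-norms.
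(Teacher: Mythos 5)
Your proof is correct, but it takes a genuinely different route to the first identity than the paper does. The paper establishes the gradient identity by a direct computation on $M$: it expands $\nabla\left(\kappa w^{2}+\left|\nabla w\right|^{2}\right)=w^{2}\nabla\kappa+2\kappa w\nabla w+2wQ\left(\nabla w\right)$, splits $\nabla w$ into its components along $\nabla u$ and $\nabla v$, evaluates the horizontal component via Lemma \ref{lem:u-structure} ($g\left(\nabla w,\nabla u\right)=\frac{\left|\nabla u\right|^{2}}{u}w$), and kills the vertical contribution by the explicit cancellation $\kappa\nabla v+Q\left(\nabla v\right)=0$ coming from Theorem \ref{thm:verticalP} ($Q|_{\mathcal{F}}=-\kappa\,I$). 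That argument uses only the Section 4 structure results and in particular does not presuppose that $\bar{\mu}\left(u\right)$ is a function on $F$ or that $v$ solves the fiber Hessian equation. You instead prove the second displayed identity first (orthogonal splitting $\nabla w=v\nabla u+u\nabla v$ together with $\abs{\nabla v}^{2}=u^{-2}\abs{\nabla v}_{F}^{2}$ --- this part coincides with what the paper implicitly asserts), and then obtain the gradient identity by differentiating on $F$, importing from Theorems \ref{thm:W-S-notempty} and \ref{thm:W} both that $\bar{\mu}\left(u\right)$ is a fiber function and that $v\in W\left(F;-\bar{\mu}\left(u\right)g_{F}\right)$. This is legitimate and non-circular, since those theorems are proved before Section 8 without reference to this proposition, and your reduction $\nabla f=u^{-2}\nabla^{F}f$ for fiber functions is handled correctly; your cancellation $\nabla^{F}\abs{\nabla v}_{F}^{2}=-2\bar{\mu}\left(u\right)v\nabla^{F}v$ is in fact the same computation the paper uses for the constancy step. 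The trade-off: your argument is shorter and more conceptual but leans on the heavier classification of $W$, whereas the paper's proof of the gradient identity is self-contained at this point in the logical development, needing only $Q|_{\mathcal{F}}=-\kappa\,I$ and Lemma \ref{lem:u-structure}, and so would survive even in settings where the full structure of $W\left(F;-\bar{\mu}\left(u\right)g_{F}\right)$ had not yet been established.
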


\begin{proof}
Note that $\nabla u$ and $\nabla v$ are horizontal and vertical vector fields respectively and so they are orthogonal. The first equality follows from the calculation
\begin{eqnarray*}
\nabla\left(\kappa w^{2}+\left|\nabla w\right|^{2}\right) & = & w^{2}\nabla\kappa+2\kappa w\nabla w+2wQ\left(\nabla w\right)\\
& = & w^{2}\nabla\kappa+2w\kappa g\left(\nabla w,\nabla u\right)\frac{\nabla u}{\left|\nabla u\right|^{2}}+2w\frac{g\left(\nabla w,\nabla u\right)}{\left|\nabla u\right|^{2}}Q\left(\nabla u\right)\\
& & + 2 w\kappa g (\nabla w, \nabla v)\frac{\nabla v}{\abs{\nabla v}^2} + 2 w \frac{g(\nabla w, \nabla v)}{\abs{\nabla v}^2} Q(\nabla v) \\
& = & \frac{w^{2}}{u^{2}}\left(u^{2}\nabla\kappa+2\kappa u\nabla u+2uQ_{B}\left(\nabla u\right)\right)+ 2 w\frac{g(\nabla w, \nabla v)}{\abs{\nabla v}^2}\left(\kappa \nabla v + Q(\nabla v)\right) \\
& = & \frac{w^{2}}{u^{2}}\nabla\left(\kappa u^{2}+\left|\nabla u\right|^{2}\right).
\end{eqnarray*}
In the third equality above we used Lemma \ref{lem:u-structure}. Theorem \ref{thm:verticalP} and the definition of $\kappa$ at the beginning of section 5 show that the term in $\nabla v$ vanishes and we obtained the last equality above. 

For the second note that if $w=\pi_{1}^{*}(u)\cdot\pi_{2}^{*}(v)$ for $v\in W\left(F;-\bar{\mu}\left(u\right)g_{F}\right)$ then
\[
\kappa w^{2}+\left|\nabla w\right|^{2}=\bar{\mu}\left(u\right)v^{2}+\left|\nabla v\right|_{F}^{2}
\]
defines a function on $F$. In the case when $\bmu(u)$ is constant on $M$, i.e., $k> 1$ or $S \ne \emptyset$, we have
\[
\nabla\left|\nabla v\right|_{F}^{2}=2\nabla_{\nabla v}\nabla v=-2v\bar{\mu}\left(u\right)\nabla v
\]
which shows that $\kappa w^2 + \abs{\nabla w}^2$ is constant on $F$ and thus on $M$.
\end{proof}

We saw in Proposition \ref{prop:WsplittingHWP-1} that it is also necessary to compute $\bar{\mu}\left(z\right)$ even though $z$ is not an element in $W$.
\begin{prop}\label{prop:bmu-z}
If $\dim W\left(M;q\right)=k+1\geq2$ and $\dim W\left(B;q_{B}\right)\geq2$, then
\[
\nabla\bar{\mu}\left(z,z\right)=\frac{z^{2}}{u^{2}}\nabla\bar{\mu}\left(u\right) + 2\frac{z}{u}\left(\kappa-\kappa_{B}\right)\left(u\nabla z-z\nabla u\right)
\]
and
\[
\nabla\bar{\mu}\left(u,z\right)=\frac{z}{u}\nabla\bar{\mu}\left(u\right) + \left(\kappa-\kappa_{B}\right)\left(u\nabla z-z\nabla u\right)
\]
where $\kappa_{B}$ is the $\kappa$ defined on $B$ using $ W\left(B;q_{B}\right)$.
\end{prop}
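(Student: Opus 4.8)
The plan is to compute $\nabla\bmu(z,z)$ and $\nabla\bmu(u,z)$ directly on the regular set $M-S\cong\mathrm{int}(B)\times F$, where $u>0$, exploiting that both $u$ and $z$ are functions on $B$ and hence have horizontal gradients. The starting point is that for any function pulled back from $B$ the $M$-Hessian has a very restricted form: on horizontal vectors $\Hess_{M}f(X,Y)=\Hess_{B}f(X,Y)$, while the mixed horizontal-vertical part vanishes. Since $u,z\in W(B;q_{B})$, i.e. $\Hess_{B}u=uq_{B}$ and $\Hess_{B}z=zq_{B}$, this immediately yields the three building blocks $\nabla\abs{\nabla u}^{2}=2uQ_{B}(\nabla u)$, $\nabla\abs{\nabla z}^{2}=2zQ_{B}(\nabla z)$, and $\nabla g(\nabla u,\nabla z)=uQ_{B}(\nabla z)+zQ_{B}(\nabla u)$, where $Q_{B}$ is the operator of $q_{B}$. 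Expanding $\nabla\bmu(u)=u^{2}\nabla\kappa+2\kappa u\nabla u+\nabla\abs{\nabla u}^{2}$ and the analogous expressions for $\nabla\bmu(z,z)$ and $\nabla\bmu(u,z)$ is then routine.

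First I would carry out this expansion and subtract the claimed right-hand sides. In both cases the algebra collapses, after cancelling the $\nabla\kappa$ terms and collecting the $\nabla u$, $\nabla z$, $Q_{B}(\nabla u)$, $Q_{B}(\nabla z)$ contributions, to the single pointwise identity on $B$
\[
u(\kappa_{B}I+Q_{B})(\nabla z)=z(\kappa_{B}I+Q_{B})(\nabla u),
\]
equivalently $(\kappa_{B}I+Q_{B})(u\nabla z-z\nabla u)=0$. Thus both formulas in the proposition are consequences of this one statement, and the bulk of the remaining work is the bookkeeping that verifies each reduces to it.

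The conceptual content, and the step I expect to be the crux, is proving this last identity by feeding the base $(B,g_{B},q_{B})$ back into the machinery of Sections 3--4. Since $\dim W(B;q_{B})\geq2$, the vector field $u\nabla z-z\nabla u$ is, by the description of $\mathcal{F}$ in Proposition \ref{prop:Fintegrable} applied to $B$, tangent to the fiber distribution $\mathcal{F}_{B}$ generated by the Killing fields $v\nabla w-w\nabla v$ with $v,w\in W(B;q_{B})$. On the other hand, Theorem \ref{thm:verticalP} applied to $B$ gives $Q_{B}|_{\mathcal{F}_{B}}=(\rho_{B}+\mathrm{tr}\,Q_{B})I=-\kappa_{B}I$, so $\kappa_{B}I+Q_{B}$ annihilates $\mathcal{F}_{B}$. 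Hence $(\kappa_{B}I+Q_{B})(u\nabla z-z\nabla u)=0$, which is exactly what is needed; note that this argument covers uniformly both the genuinely-warped case and the degenerate product case $\mathcal{F}_{B}=\widehat{\mathcal{F}}_{B}$, where both sides simply vanish.

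The main obstacle is making this recursive application legitimate: $B$ is simply connected but may carry a boundary and need not be complete, so one cannot invoke the global Theorem A for $B$. However, Proposition \ref{prop:Fintegrable} and Theorem \ref{thm:verticalP} are local statements on the regular set, proved from the foliation structure and the Weitzenb\"{o}ck formula, so they apply on $\mathrm{int}(B)$ and deliver the identity pointwise there. This gives both formulas on $M-S$; since $u>0$ on $\mathrm{int}(B)$ the factors $\frac{z}{u}$ and $\frac{z^{2}}{u^{2}}$ are smooth there, and the stated equalities hold on all of $M-S$, which is all that is required.
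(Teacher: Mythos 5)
Your proposal is correct and takes essentially the same approach as the paper: after the routine expansion both identities reduce to $\left(\kappa_{B}I+Q_{B}\right)\left(u\nabla z-z\nabla u\right)=0$, which the paper establishes exactly as you do, by observing that $K=u\nabla z-z\nabla u$ is vertical for the warped product splitting of $B$ induced by $W\left(B;q_{B}\right)$ (using $\dim W\left(B;q_{B}\right)\geq2$) and applying Theorem \ref{thm:verticalP} to $\left(B,g_{B}\right)$ to get $Q_{B}\left(K\right)=\left(\rho_{B}+\mathrm{tr}Q_{B}\right)K=-\kappa_{B}K$. Your added remark that these ingredients are local on the regular set, so that incompleteness of $B$ and the presence of $\partial B$ cause no trouble, is a fair point the paper leaves implicit.
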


\begin{proof}
Let $z\in W\left(B;q_{B}\right)$ and $K=u\nabla z-z\nabla u$ be the corresponding Killing vector field. Since $q_B = \frac{1}{u}\mathrm{Hess}_B u$, the condition $z\in W\left(B; q_B\right)$ implies that
\begin{equation}\label{eqn:uz}
\nabla_{\nabla u}\nabla z = \frac{z}{u}\nabla_{\nabla u}\nabla u.
\end{equation}
Then we have
\begin{eqnarray*}
\nabla\bar{\mu}\left(u,z\right) & = & \left(\nabla\kappa\right)uz+\kappa\left(\nabla u\right)z+\kappa u\nabla z+\nabla_{\nabla z}\nabla u+\nabla_{\nabla u}\nabla z\\
& = & \left(\nabla\kappa\right)uz+2\kappa\left(\nabla u\right)z+\kappa K+2\frac{z}{u}\nabla_{\nabla u}\nabla u+\frac{1}{u}\nabla_{K}\nabla u\\
& = & \frac{z}{u}\nabla\left(\bar{\mu}\left(u\right)\right)+\kappa K+Q_{B}\left(K\right)\\
& = & \frac{z}{u}\nabla\left(\bar{\mu}\left(u\right)\right)+\kappa K-\kappa_{B}K\\
& = & \frac{z}{u}\nabla\left(\bar{\mu}\left(u\right)\right)+\left(\kappa-\kappa_{B}\right)\left(u\nabla z-z\nabla u\right).
\end{eqnarray*}
Note that the second equality above follows from the identity (\ref{eqn:uz}). Since $K$ is a vertical vector field in the warped product decomposition of $B$ induced by $W(B; q_B)$, Theorem \ref{thm:verticalP} applied to $(B, g_B)$ implies that $Q_B(K) = \left(\rho_B + \mathrm{tr}Q_B\right) K = - \kappa_B K$ which gives the fourth equality above. 

Similarly we have
\[
\nabla\left(\kappa z^{2}+\left|\nabla z\right|^{2}\right)=\frac{z^{2}}{u^{2}}\nabla\left(\kappa u^{2}+\left|\nabla u\right|^{2}\right)+2\frac{z}{u}\left(\kappa-\kappa_{B}\right)\left(u\nabla z-z\nabla u\right)
\]
which shows the desired identities.
\end{proof}

\begin{cor}\label{cor:basestructure}
Assume that $M$ is simply connected such that $\dim W\left(M;q\right)=k+1\geq2$ and $S=\emptyset$. If $\bar{\mu}\left(u\right)$ is constant and $\dim W\left(B;q_{B}\right)\geq2$, then $\kappa\neq\kappa_{B}$.
\end{cor}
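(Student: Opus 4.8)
The plan is to argue by contradiction: suppose instead that $\kappa=\kappa_{B}$. Since $\dim W(B;q_{B})\geq2$ we may fix $z\in W(B;q_{B})$ linearly independent from $u$; this independence is also what makes the warped product structure on $B$, and hence $\kappa_{B}$, defined in the first place. Because $\bar{\mu}(u)$ is assumed constant we have $\nabla\bar{\mu}(u)=0$, so substituting $\kappa=\kappa_{B}$ and $\nabla\bar{\mu}(u)=0$ into the second identity of Proposition \ref{prop:bmu-z} gives
\[
\nabla\bar{\mu}(u,z)=\frac{z}{u}\nabla\bar{\mu}(u)+\left(\kappa-\kappa_{B}\right)\left(u\nabla z-z\nabla u\right)=0.
\]
Thus both $\bar{\mu}(u)$ and $\bar{\mu}(u,z)$ are constants on $M$. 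This is the crucial leverage supplied by the assumption $\kappa=\kappa_{B}$: it kills the Killing-field term that would otherwise obstruct constancy of $\bar{\mu}(u,z)$.

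Next I would use Proposition \ref{prop:WsplittingHWP-1} to manufacture from $z$ a genuine solution of the full system on $M$. Writing $\tau=\bar{\mu}(u)$ and $c=\bar{\mu}(u,z)$, both now constants, condition (3) of that proposition asks only for a $v\in C^{\infty}(F)$ with
\[
\mathrm{Hess}_{F}v+v\tau g_{F}=-c\,g_{F}.
\]
Since $F$ is a space form this inhomogeneous equation always admits a particular solution: when $\tau\neq0$ the constant $v=-c/\tau$ works, while when $\tau=0$ (so $F$ is flat) one takes $v=-\tfrac{c}{2}r^{2}$ with $r$ the distance to a basepoint, which satisfies $\mathrm{Hess}_{F}v=-c\,g_{F}$. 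With conditions (1)–(3) of Proposition \ref{prop:WsplittingHWP-1} all verified ($z\in W(B;q_{B})$ being condition (2)), the function
\[
w=\pi_{1}^{*}(z)+\pi_{1}^{*}(u)\cdot\pi_{2}^{*}(v)
\]
lies in $W(M;q)$, and its base component is exactly $z$, which is not a multiple of $u$.

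Finally I would invoke Theorem \ref{thm:W}, which, since $S=\emptyset$, asserts that every element of $W(M;q)$ has the form $\pi_{1}^{*}(u)\cdot\pi_{2}^{*}(v')$ for some $v'\in W\left(F;-\bar{\mu}(u)g_{F}\right)$. Applying this to the $w$ just built and rearranging gives $\pi_{1}^{*}(z)+\pi_{1}^{*}(u)\cdot\pi_{2}^{*}(v-v')=0$, so the uniqueness clause of Lemma \ref{lem:wsplitting} forces $v-v'$ to be constant and $z$ to be a constant multiple of $u$, contradicting the choice of $z$. This contradiction yields $\kappa\neq\kappa_{B}$. The only step that needs genuine care is the solvability of the fiber equation, on which the whole construction rests; but this is immediate once the $\tau\neq0$ and $\tau=0$ cases are separated, and the rest is a direct chain of the cited results.
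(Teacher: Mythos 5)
Your proof is correct, and it follows the paper's skeleton with one genuine deviation at the key middle step. Both arguments assume $\kappa=\kappa_{B}$, feed $\nabla\bar{\mu}\left(u\right)=0$ into Proposition \ref{prop:bmu-z} to conclude that $\bar{\mu}\left(u,z\right)$ is constant, manufacture an element $w=\pi_{1}^{*}(z)+\pi_{1}^{*}(u)\cdot\pi_{2}^{*}(v)\in W\left(M;q\right)$ via Proposition \ref{prop:WsplittingHWP-1}, and derive the contradiction with Theorem \ref{thm:W} (via the uniqueness clause of Lemma \ref{lem:wsplitting}). Where you differ is in how condition (3) of Proposition \ref{prop:WsplittingHWP-1} is satisfied: the paper observes that $\bar{\mu}=\bar{\mu}_{B}$ defines a quadratic form on $W\left(B;q_{B}\right)$ and then \emph{orthogonalizes}, choosing $z\in W\left(B;q_{B}\right)-\mathrm{span}\left\{ u\right\}$ with $\bar{\mu}\left(u,z\right)=0$, so that the fiber equation becomes homogeneous and any $v\in W\left(F;-\bar{\mu}\left(u\right)g_{F}\right)$ (even $v=0$) works; you instead keep $z$ arbitrary and exhibit an explicit particular solution of the inhomogeneous equation, $v=-c/\tau$ when $\tau\neq0$ and $v=-\frac{c}{2}r^{2}$ when $\tau=0$. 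Your route actually buys something: the paper's orthogonalization step is delicate precisely when $\bar{\mu}\left(u\right)=0$ but $\bar{\mu}\left(u,\cdot\right)\not\equiv0$, for if $\dim W\left(B;q_{B}\right)=2$ then every $z^{\prime}$ in that span with $\bar{\mu}\left(u,z^{\prime}\right)=0$ is a multiple of $u$, so the claimed choice of $z$ need not exist; your particular-solution device covers this degenerate case with no extra work. The one place where you are terser than you should be is the parenthetical ``when $\tau=0$, $F$ is flat'': this is true but deserves its one-line justification, namely that Theorem \ref{thm:W} forces $\dim W\left(F;-\tau g_{F}\right)=k+1$, and by Lemma \ref{lem:ReverseObata} (or Lemma \ref{lem:GenObata}) this pins the curvature of $F$ to $\tau=0$, since a sphere or hyperbolic fiber would only carry constant solutions of $\mathrm{Hess}_{F}v=0$; for $k=1$ one simply has $F=\mathbb{R}$ because $M$ is simply connected and $S=\emptyset$. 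With that sentence added, your argument is complete and, in the degenerate case noted above, slightly more robust than the paper's.
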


\begin{proof}
In case $\bar{\mu}\left(u\right)$ is constant and $\kappa=\kappa_{B}$ the above Proposition \ref{prop:bmu-z} implies that $\bar{\mu}=\bar{\mu}_{B}$ defines a quadratic form on $W\left(B;q_{B}\right)$. If $\dim W\left(B;q_{B}\right)\geq2$ it is then possible to find $z\in W\left(B;q_{B}\right)-\mathrm{span}\left\{ u\right\} $ such that $\bar{\mu}\left(u,z\right)=0$. Proposition \ref{prop:WsplittingHWP-1} then implies that $w=z+uv\in W$ when $v\in W\left(F;-\bar{\mu}\left(u\right)g_{F}\right)$. But that contradicts Theorem \ref{thm:W}.
\end{proof}

\smallskip
\subsection{The Lie algebraic structure on $\wedge^2 W$}

Suppose that $\dim W = k+1$, thus $\dim \wedge^2 W = \frac{1}{2}k(k+1)$. In this subsection we assume that $k > 1$, i.e., $\wedge^2 W$ is not of dimension one. From Proposition \ref{prop:mu-construction} the bilinear form $\bmu(w)$ is a constant on $M$ for any $w\in W$. We associate an element $v\wedge w \in \wedge^2 W$ with the following endomorphism $L \in \mathfrak{gl}(W)$:
\[
L(x) = \bmu(w,x)v - \bmu(v,x) w,\quad \forall x \in W.
\]
We show that the association is faithful. Fix a point $p \in M$ there is a localization $\bmu_p$ on $\Real \times T_p M$ of $\bmu$ defined by
\[
\bmu_p(\alpha, v) = \kappa(p) \alpha^2 + \abs{v}^2
\]
for any $(\alpha, v)\in \Real \times T_pM$. So the evaluation map $w \mapsto (w(p), \nabla w|_p)$ in Proposition \ref{prop:injectionHWP} is an injective isometry from $(W, \bmu)$ to $(\Real\times T_pM , \bmu_p)$. In particular the bilinear form $\bmu$ has nullity at most one. Any element in $\wedge^2 W$ can be written as $w_1\wedge w_2 + ... + w_{2s-1} \wedge w_{2s}$ where $w_1 , ..., w_{2s} \in W$ are linearly independent. If this element is mapped to zero in $\mathfrak{gl}(W)$ then $\bmu(w_i , x) = 0$ for all $x\in W$. This shows that $w_i$ lies in the nullspace for $\bmu$ contradicting that they are linearly independent. So we can view $\wedge^2 W$ as a subspace of $\mathfrak{gl}(W)$ with the natural Lie bracket operation from $\mathfrak{gl}(W)$. Moreover since $\bmu(L(x), y) + \bmu(x, L(y)) = 0$ for any $x, y\in W$ we have the following inclusion
\[
\wedge^2 W \subset \mathfrak{so}(W, \bmu)
\]
where
\[
\mathfrak{so}(W, \bmu) = \set{L \in \mathfrak{gl}(W) : \bmu(Lx, y) + \bmu(x, Ly) = 0, \, \forall x, y \in W}.
\]
By counting the dimensions of the two spaces we have $\wedge^2 W = \mathfrak{so}(W, \bmu)$ unless $\bmu$ has nullity one. In that case the codimension of $\wedge^2 W$ in $\mathfrak{so}(W, \bmu)$ is one.

Recall the map $\iota : \wedge^2 W \rightarrow \mathfrak{iso}(M, g)$ in (\ref{eq:iota}) sending $v\wedge w$ to $v\nabla w - w \nabla v$ and that $\iota(\wedge^2 W) \subset \mathfrak{iso}(M, g)$ is a Lie subalgebra by Corollary \ref{cor:wedgeWliealg}. We show that the two Lie algebra structures on $\wedge^2 W$ are compatible.

\begin{prop}\label{prop:liealg}
The subspace $\wedge^2 W \subset \mathfrak{so}(W, \bmu)$ is a Lie subalgebra and the map $\iota : \wedge^2 W \rightarrow \mathfrak{iso}(M, g)$ is an injective Lie algebra homomorphism.
\end{prop}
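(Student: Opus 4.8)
The plan is to establish one bridging identity between the two structures and then let a direct computation of the Lie bracket of the associated Killing fields do the rest. Injectivity of $\iota$ is already recorded (it is immediate from Proposition \ref{prop:injectionHWP}), and the inclusion $\wedge^2 W \subset \mathfrak{so}(W,\bmu)$ together with the faithfulness of the map $\zeta \mapsto L_\zeta$ into $\mathfrak{gl}(W)$ has been verified above. So the two remaining points are that $\wedge^2 W$ is closed under the commutator of $\mathfrak{gl}(W)$ and that $\iota$ intertwines this commutator with the bracket of Killing fields.

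First I would prove the key identity: for every $a, v, w \in W$, writing $K = \iota(v\wedge w) = v\nabla w - w\nabla v$ and $L = L_{v\wedge w}$, one has $g(\nabla a, K) = L(a)$ as functions on $M$. This follows by expanding $g(\nabla a, K) = v\,g(\nabla a, \nabla w) - w\,g(\nabla a, \nabla v)$ and substituting $g(\nabla a, \nabla b) = \bmu(a,b) - \kappa ab$; the two terms carrying $\kappa$ combine into $\kappa a(wv - vw) = 0$, leaving $\bmu(w,a)v - \bmu(v,a)w = L(a)$. Here it is essential that, since $k>1$, Proposition \ref{prop:mu-construction} guarantees each $\bmu(a,b)$ is a genuine constant, so that the right-hand side is an element of $W$. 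This is the step that makes the geometric and algebraic brackets talk to each other, and is where the real content lies.

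Next I would compute $[K_1, K_2] = \nabla_{K_1}K_2 - \nabla_{K_2}K_1$ for $K_i = \iota(v_i\wedge w_i)$. Since $v_i, w_i \in W$, the computation in the proof of Lemma \ref{lem:Killing-Construction} gives $\nabla_X K_i = (\nabla w_i \wedge \nabla v_i)(X) = g(\nabla v_i, X)\nabla w_i - g(\nabla w_i, X)\nabla v_i$, the Hessian terms cancelling because $v_i\Hess w_i = w_i\Hess v_i = v_i w_i q$. Putting $X = K_1$ and applying the bridging identity with $a = v_2$ and $a = w_2$ yields $\nabla_{K_1}K_2 = L_1(v_2)\nabla w_2 - L_1(w_2)\nabla v_2$, and symmetrically for $\nabla_{K_2}K_1$. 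Expanding $L_1(v_2), L_1(w_2), L_2(v_1), L_2(w_1)$ from the definition of $L_{a\wedge b}$ and regrouping the eight resulting terms in pairs of the form $a\nabla b - b\nabla a$, using only the symmetry of $\bmu$, collapses the expression to $\iota(\xi)$ with
\[
\xi = \bmu(w_1,v_2)\,v_1\wedge w_2 - \bmu(v_1,v_2)\,w_1\wedge w_2 - \bmu(w_1,w_2)\,v_1\wedge v_2 + \bmu(v_1,w_2)\,w_1\wedge v_2.
\]

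Finally I would verify the purely algebraic identity $[L_1, L_2] = L_\xi$ in $\mathfrak{gl}(W)$ by evaluating both endomorphisms on an arbitrary $x \in W$ and matching the coefficients of $v_1, w_1, v_2, w_2$ (again only the symmetry of $\bmu$ enters); this is the classical fact underlying $\wedge^2 V \cong \mathfrak{so}(V,B)$ and uses no geometry. Since $\xi \in \wedge^2 W$, this shows at once that $\wedge^2 W$ is closed under the commutator, hence a Lie subalgebra of $\mathfrak{so}(W,\bmu)$, and that $\iota([v_1\wedge w_1, v_2\wedge w_2]) = \iota(\xi) = [\iota(v_1\wedge w_1), \iota(v_2\wedge w_2)]$, so $\iota$ is a Lie algebra homomorphism; injectivity was already noted. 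I expect the only obstacle to be bookkeeping: one must keep the wedge convention $(\alpha\wedge\beta)(X) = g(\beta,X)\alpha - g(\alpha,X)\beta$ and the sign of $L_{a\wedge b}$ consistent so that the regrouping in the third step and the coefficient-matching in the fourth produce the same $\xi$.
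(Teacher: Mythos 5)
Your proposal is correct and takes essentially the same route as the paper: both prove $[L_1,L_2]=L_\xi$ by direct evaluation on an arbitrary $x\in W$, and both identify $[\iota(v_1\wedge w_1),\iota(v_2\wedge w_2)]$ with $\iota(\xi)$ for exactly the paper's element $z_3=\xi$, using the Hessian cancellation $v\,\Hess w=w\,\Hess v$ together with the relation $\bmu(v,w)=\kappa vw+g(\nabla v,\nabla w)$. Your bridging identity $g(\nabla a,\iota(v\wedge w))=L_{v\wedge w}(a)$ is a tidy repackaging of the paper's two separate expansions (of $\nabla_{X_1}X_2$ and of $X_3$) rather than a different argument, and you correctly flag that its validity rests on the constancy of $\bmu$ from Proposition \ref{prop:mu-construction} under the standing assumption $k>1$.
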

\begin{proof}
Let $L_i$ be the endomorphisms associated with $z_i = v_i \wedge w_i$ for $i=1, 2$. Then for any $x\in W$ we have
\begin{eqnarray*}
[L_1, L_2](x)  & = & L_1(L_2(x)) - L_2(L_1(x)) \\
& = & L_1(\bmu(w_2, x)v_2 - \bmu(v_2, x)w_2) - L_2(\bmu(w_1, x)v_1 - \bmu(v_1, x)w_1) \\
& = & \bmu(w_2, x)\left(\bmu(w_1, v_2)v_1 - \bmu(v_1, v_2)w_1\right) - \bmu(v_2, x)\left(\bmu(w_1, w_2)v_1 - \bmu(v_1, w_2)w_1\right) \\
& & - \bmu(w_1, x)\left(\bmu(w_2, v_1)v_2 - \bmu(v_2, v_1)w_2\right) + \bmu(v_1, x)\left(\bmu(w_2, w_1)v_2 - \bmu(v_2, w_1)w_2\right) \\
& = & \bmu(w_1, w_2)\left(\bmu(v_1, x)v_2 - \bmu(v_2, x) v_1\right) - \bmu(w_1,v_2)\left(\bmu(v_1, x)w_2 - \bmu(w_2, x)v_1\right) \\
& & - \bmu(w_2, v_1)\left(\bmu(w_1, x)v_2 - \bmu(v_2, x)w_1\right) + \bmu(v_1, v_2)\left(
\bmu(w_1, x)w_2 - \bmu(w_2, x)w_1\right) \\
& = & L_3(x)
\end{eqnarray*}
where $L_3$ is associated with
\[
z_3 = - \bmu(v_1, v_2)w_1 \wedge w_2 + \bmu(v_1, w_2)w_1 \wedge v_2 + \bmu(v_2, w_1)v_1 \wedge w_2 - \bmu(w_1, w_2)v_1 \wedge v_2 \in \wedge^2 W.
\]
This shows that $\wedge^2 W \subset \mathfrak{so}(W, \bmu)$ is a Lie subalgebra.

We already know that $\iota$ is an injection and only need to show that it is a homomorphism with respect to Lie algebra structures. Let $X_i = \iota(z_i)$ for $i= 1, 2, 3$. We have
\begin{eqnarray*}
X_3 & = & -\bmu(v_1, v_2)\left(w_1 \nabla w_2 - w_2 \nabla w_1 \right) + \bmu(v_1, w_2)\left(w_1 \nabla v_2 - v_2 \nabla w_1\right) \\
& & + \bmu(v_2, w_1)\left(v_1 \nabla w_2 - w_2 \nabla v_1 \right) - \bmu(w_1, w_2)\left(v_1 \nabla v_2 - v_2 \nabla v_1 \right) \\
& = & - w_1 g(\nabla v_1, \nabla v_2)\nabla w_2 + v_1 g(\nabla v_2, \nabla w_1)\nabla w_2 \\
& & + w_2 g(\nabla v_1, \nabla v_2)\nabla w_1 - v_2 g(\nabla v_1, \nabla w_2)\nabla w_1 \\
& & + w_1 g(\nabla v_1, \nabla w_2)\nabla v_2 - v_1 g(\nabla w_1, \nabla w_2)\nabla v_2 \\
& & - w_2 g(\nabla v_2, \nabla w_1)\nabla v_1 + v_2 g(\nabla w_1, \nabla w_2)\nabla v_1.
\end{eqnarray*}
We used the formula $\bmu(v, w) = \kappa v w + g(\nabla v, \nabla w)$ and all the terms without $g(\cdot, \cdot)$ factor cancel out in the last equality.

Note that $\mathrm{Hess} w = w q$ and we compute
\begin{eqnarray*}
\nabla_{X_1} X_2 & = & \nabla_{v_1 \nabla w_1 - w_1 \nabla v_1} \left(v_2 \nabla w_2 - w_2 \nabla v_2\right) \\
& = & v_1 \nabla_{\nabla w_1}\left(v_2 \nabla w_2 - w_2 \nabla v_2\right) - w_1\nabla_{\nabla v_1}\left(v_2 \nabla w_2 - w_2 \nabla v_2\right) \\
& = & v_1 g(\nabla w_1, \nabla v_2)\nabla w_2 + v_1 v_2 \mathrm{Hess} w_2(\nabla w_1) \\
& & - v_1 g(\nabla w_1, \nabla w_2)\nabla v_2 - v_1 w_2 \mathrm{Hess} v_2(\nabla w_1) \\
& & - w_1 g(\nabla v_1, \nabla v_2)\nabla w_2 - w_1 v_2 \mathrm{Hess} w_2(\nabla v_1) \\
& & + w_1 g(\nabla v_1, \nabla w_2)\nabla v_2 + w_1 w_2 \mathrm{Hess} v_2(\nabla v_1) \\
& = &  v_1 g(\nabla w_1, \nabla v_2)\nabla w_2 - w_1 g(\nabla v_1, \nabla v_2)\nabla w_2  \\
& & - v_1 g(\nabla w_1, \nabla w_2)\nabla v_2 + w_1 g(\nabla v_1, \nabla w_2)\nabla v_2.  \end{eqnarray*}
It follows that
\begin{eqnarray*}
[X_1, X_2] & = & v_1 g(\nabla w_1, \nabla v_2)\nabla w_2 - w_1 g(\nabla v_1, \nabla v_2)\nabla w_2 \\
& & - v_1 g(\nabla w_1, \nabla w_2)\nabla v_2 + w_1 g(\nabla v_1, \nabla w_2)\nabla v_2 \\
& & - v_2 g(\nabla w_2, \nabla v_1)\nabla w_1 + w_2 g(\nabla v_2, \nabla v_1)\nabla w_1 \\
& & + v_2 g(\nabla w_2, \nabla w_1)\nabla v_1 - w_2 g(\nabla v_2, \nabla w_1)\nabla v_1 \\
& = & X_3.
\end{eqnarray*}
This shows that $\iota$ is a homomorphism and finishes the proof.
\end{proof}

\medskip


\begin{thebibliography}{ABCD99}
\bibitem[Be]{Besse} A. L. Besse, \emph{Einstein manifolds}. Ergebnisse der Mathematik und ihrer Grenzgebiete (3), \textbf{10}. Springer-Verlag, Berlin, 1987.

\bibitem[BH]{BH} R. A. Blumenthal and J. J. Hebda,\emph{ de Rham decomposition theorems for foliated manifolds}, Ann. Inst. Fourier(Grenoble), \textbf{33}(1983), no. 2, 183--198.

\bibitem[Br]{Brinkmann} H.W. Brinkmann, \emph{Einstein spaces which are mapped conformally on each other}, Math. Ann. \textbf{94}(1925), 119-145.

\bibitem[Cetc]{Ricciflow} B. Chow, S.-C. Chu, D. Glickenstein, C. Guenther, J. Isenberg, T. Ivey, D. Knopf, P. Lu and L. Ni, \emph{The Ricci flow: Techniques and Applications, Part I: Geometric Aspects}, AMS Math. Surv. Mon. \textbf{135}(2007), Amer. Math. Soc., Providence.

\bibitem[Co]{Corvino} J. Corvino, \emph{Scalar curvature deformation and a gluing construction for the Einstein constraint equations}, Comm. Math. Phys., \textbf{214}(2000), no. 1, 137--189.

\bibitem[CSW]{CSW} J. Case, Y.-J. Shu, and G. Wei, \emph{Rigidity of Quasi-Einstein Metrics}, Differential Geom. Appl., \textbf{29}(2011), no. 1, 93--100.

\bibitem[CC]{Cheeger-Colding} J. Cheeger and T. Colding, \emph{Lower bounds on Ricci curvature and the almost rigidity of warped products}, Ann. of Math. (2), \textbf{144}(1996), no. 1, 189-237.

\bibitem[GP]{GlickensteinPayne} D. Glickenstein and T. L. Payne, \emph{Ricci flow on three-dimensional, unimodular metric Lie algebras}, arXiv:0909.0938.

\bibitem[GW]{GW} D. Gromoll and G. Walschap, \emph{Metric foliations and curvature}, Progress in Mathematics, \textbf{268}. Birkh\"{a}user Verlag, Basel, 2009.

\bibitem[HPW1]{HPWLCF} C. He, P. Petersen and W. Wylie, \emph{On the classification of warped product Einstein metrics}, Comm. Anal. Geom., \textbf{20}(2012), No. 2, 271--312.

\bibitem[HPW2]{HPWconstantscal} C. He, P. Petersen and W. Wylie, \emph{Rigidity of warped product Einstein manifolds with constant scalar curvature}, arXiv: 1012.3446v1.

\bibitem[HPW4]{HPWVirt-Soln}C. He, P. Petersen and W. Wylie, \emph{Uniqueness of warped product Einstein metrics and applications}, arXiv: 1110.2456v2.

\bibitem[HPW5]{HPWuniqsoliton}C. He, P. Petersen and W. Wylie, \emph{Uniqueness of gradient Ricci solitons on warped products}, Preprint, 2012.

\bibitem[Je]{Jensen} G. R. Jensen,. \emph{Homogeneous Einstein spaces of dimension four}, J. Differential Geom., \textbf{3}(1969), 309--349.

\bibitem[JW]{JW} D. L. Johnson and L. B. Whitt, \emph{Totally geodesic foliations}, J. Differential Geom., \textbf{15}(1980), no. 2, 225--235.

\bibitem[KN]{KobayashiNomizu} S. Kobayashi and K. Nomizu, \emph{Foundations of differential geometry}, Vol I, Interscience Publishers, a division of John Wiley $\&$ Sons, New York-London, 1963.

\bibitem[KK]{KK} D.-S. Kim and Y.-H. Kim, \emph{Compact Einstein warped product spaces with nonpositive scalar curvature}, Proc. Amer. Math. Soc., \textbf{131}(2003), no. 8, 2573--2576.

\bibitem[KR]{KuehnelRademacher} W. K\"{u}hnel and H.-B. Rademacher, \emph{Einstein spaces with a conformal group}, Results Math., \textbf{56}(2009), no. 1-4, 421--444.

\bibitem[Ku]{Kulkarni} R. S. Kulkarni, \emph{Curvature and metric}, Ann. of Math. (2), \textbf{91}(1970), 311--331.

\bibitem[La1]{LauretNil} J. Lauret, \emph{Ricci soliton homogeneous nilmanifolds}, Math. Ann., \textbf{319}(2001), 715--733.

\bibitem[La2]{LauretSol} J. Lauret, \emph{Ricci soliton solvmanifolds}, J. Reine Angew. Math., \textbf{650}(2011), 1--21.

\bibitem[La3]{LauretRFNil} J. Lauret, \emph{The Ricci flow for simply connected nilmanifolds}, arXiv: 1004.0946.

\bibitem[MW]{Magnus-Winkler}W. Magnus and S. Winkler, \emph{Hill's Equation}, Dover, New York, 1979.

\bibitem[Mi]{Milnor} J. Milnor, \emph{Curvatures of left invariant metrics on Lie groups}, Advances in Math., \textbf{21}(1976), no. 3, 293--329.

\bibitem[Ob]{Obata}M. Obata, \emph{Certain conditions for a Riemannian manifold to be isometric with a sphere}, J. Math. Soc. Japan, \textbf{14}(1962), 333--340.

\bibitem[OS]{OS} B. Osgood and D. Stowe, \emph{The Schwarzian derivative and conformal mapping of Riemannian manifolds},  Duke Math. J.,  \textbf{67}(1992), no. 1, 57--99.

\bibitem[O'N]{O'Neill} B. O'Neill, \emph{Semi-Riemannian Geometry}, Academic Press: New York, 1983.

\bibitem[Pe]{Petersen}P. Petersen, \emph{Riemannian Geometry}, $2$nd-Edition, Springer Verlag: New York, 2006.

\bibitem[PW]{PWsymmetry} P. Petersen and W. Wylie, \emph{On gradient Ricci solitons with symmetry}, Proc. Amer. Math. Soc., \textbf{137}(2009), no. 6, 2085--2092.

\bibitem[Ta]{Tashiro} Y. Tashiro, \emph{Complete Riemannian manifolds and some vector fields}, Trans. Amer. Math. Soc., \textbf{117}(1965), 251--275.

\bibitem[Wa]{Wang} Q. M. Wang, \emph{Isoparametric functions on Riemannian manifolds I}, Math. Ann., \textbf{277}(1987), no. 4, 639--646.

\bibitem[Ya]{Yau} S. T. Yau, \emph{Curvature preserving diffeomorphisms}, Ann. of Math. (2), \textbf{100}(1974), 121--130.

\bibitem[Ze]{Zeghib} A. Zeghib, \emph{Geometry of warped products}, arXiv:1107.0411v1, 2011.

\end{thebibliography}
\end{document}